\definecolor{orange}{RGB}{255, 102, 0}
\newtheorem{theorem}{Theorem}[section]
\newtheorem{lemma}[theorem]{Lemma}
\newtheorem{conjecture}[theorem]{Conjecture}
\newtheorem{proposition}[theorem]{Proposition}
\theoremstyle{definition}
\newtheorem{definition}[theorem]{Definition}
\newtheorem{remark}[theorem]{Remark}
\definecolor{lsupurple}{RGB}{70,29,124}
\definecolor{lsugold}{RGB}{253,208, 35}
\def\Span{\operatorname{span}}
\definecolor{myblue}{RGB}{73,190,255}
\begin{document}

\title{On the arc index and Turaev genus of a link}

\author{\'Alvaro del Valle V\'ilchez}
\address{Departamento de Algebra de la Universidad de Sevilla \& Instituto de Matem\'aticas de la Universidad de Sevilla (IMUS). Av. Reina Mercedes s/n. 41012, Sevilla. Spain.}
\email{adelvalle3@us.es}
 \thanks{The first author was partially supported by the grant VII PPIT-US, and by the projects PID2020-117971GB-C21 and PID2024-157173NB-I00 funded by MCIN/AEI/10.13039/501100011033 and by FEDER,
EU}

\author{Adam M. Lowrance}
\address{Department of Mathematics and Statistics, Vassar College, Poughkeepsie, NY 12604}
\email{adlowrance@vassar.edu}

\subjclass[2020]{Primary 57K10, Secondary 20F36, 57K18}




\begin{abstract}
We compute the arc index of an adequate link and establish bounds on the arc index of the closure of a positive 3-braid. We also conjecture an inequality between the crossing number, arc index, and Turaev genus of a link and show the conjecture is true for several infinite families of links including alternating links, links with Turaev genus one, adequate links, closures of positive 3-braids, torus links, and most Kanenobu knots. 
\end{abstract}

\maketitle

\section{Introduction}
An \textit{arc presentation} of a link $L$ is an embedding of $L$ into finitely many half-planes whose boundary is the $z$-axis such that each half-plane intersects the link in a single properly embedded arc. The \textit{arc index} $\alpha(L)$ of the link $L$ is the minimum number of half-planes in any arc presentation of $L$. 

Let $L$ be a non-split prime link. Bae and Park \cite{Bae_Park_2000} proved that if $L$ is alternating, then $\alpha(L) = c(L)+2$. Jin and Park \cite{Jin_Park_2010} proved that if $L$ is non-alternating, then $\alpha(L) \leq c(L)$.  Hence one can view the quantity $c(L)+2 - \alpha(L)$ as a way to measure how far a link is from being alternating.

Lowrance \cite{Lowrance_2021, Lowrance_2015} studied various ways of measuring the distance between a link and the set of alternating links. One such measure is the Turaev genus $g_T(L)$ of a non-split link $L$, defined as follows. A \textit{Kauffman state} of the link diagram $D$ is the set of simple closed curves obtained from a choice at each crossing $\tikz[baseline=.6ex, scale = .4]{
\draw (0,0) -- (1,1);
\draw (1,0) -- (.7,.3);
\draw (.3,.7) -- (0,1);
}
$ of an $A$-resolution $ \tikz[baseline=.6ex, scale = .4]{
\draw[rounded corners = 1mm] (0,0) -- (.45,.5) -- (0,1);
\draw[rounded corners = 1mm] (1,0) -- (.55,.5) -- (1,1);
}$ or a $B$-resolution $\tikz[baseline=.6ex, scale = .4]{
\draw[rounded corners = 1mm] (0,0) -- (.5,.45) -- (1,0);
\draw[rounded corners = 1mm] (0,1) -- (.5,.55) -- (1,1);
}$. The state $s_A D$ with an $A$-resolution at every crossing is the \textit{all-$A$ state} of $D$, and the state $s_B D$ with a $B$-resolution at every crossing is the \textit{all-$B$ state} of $D$. The \textit{Turaev genus} of $L$ is defined as
\[g_T(L) = \min\left\{  \frac{1}{2}\left(2+c(D) -|s_A D| - |s_B D| \right) \; \middle| \; D \text{ is a diagram of }  L \right\},\]
where $c(D)$ is the number of crossings in $D$, and $|s_A D|$ and $|s_B D|$ are the number of components in the all-$A$ and all-$B$ Kauffman states of $D$, respectively. 

We propose a conjecture relating the crossing number $c(L)$, the arc index $\alpha(L)$, and the Turaev genus $g_T(L)$ of a non-split prime link $L$.

\begin{conjecture}\label{conj:c+2-a_gT}
    For any non-split prime link $L$, 
    \[ c(L) + 2 - \alpha(L) \geq 2 g_T(L).\]
\end{conjecture}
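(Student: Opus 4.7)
The plan is to reduce Conjecture \ref{conj:c+2-a_gT} to the following diagram-wise bound, which I denote $(\ast)$:
\[
\alpha(L) \leq |s_A D| + |s_B D| \quad \text{for every diagram } D \text{ of a non-split prime link } L.
\]
Granting $(\ast)$, the conjecture follows by choosing $D$ to be a minimum crossing diagram of $L$. The definition of $g_T(L)$ as a minimum gives
\[
|s_A D| + |s_B D| \leq c(D) + 2 - 2g_T(L) = c(L) + 2 - 2g_T(L),
\]
and combining with $(\ast)$ yields $\alpha(L) \leq c(L) + 2 - 2g_T(L)$. Observe that $(\ast)$ is tight on the Bae--Park side, since for a reduced alternating diagram one has $|s_A D| + |s_B D| = c(D) + 2$, and it is consistent with Jin--Park on the non-alternating side, where $|s_A D| + |s_B D| \leq c(D)$ for any reduced non-alternating $D$.

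To prove $(\ast)$, I would construct an arc presentation directly from the Turaev surface $\Sigma_D$ of $D$. Recall that $\Sigma_D \setminus L$ is a disjoint union of $|s_A D| + |s_B D|$ open disks, indexed by the all-$A$ and all-$B$ state circles, and that $L$ lies on $\Sigma_D$ as an alternating 4-valent graph. The strategy is to assign one half-plane of the arc presentation to each of these disks: the arc in the half-plane associated with a state circle $C$ should trace out the portion of $L$ bounding the corresponding disk, with one endpoint on the $z$-axis for each crossing of $D$ that $C$ passes through. The cyclic order of the half-planes around the $z$-axis must then be chosen to match the combinatorial adjacency of the disks along the 1-skeleton of $\Sigma_D$.

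The main obstacle is realizing a compatible cyclic ordering when $g_T(D) > 0$. When $D$ is reduced alternating, $\Sigma_D$ is a sphere and the construction essentially reproduces Bae--Park. In positive genus the $A$- and $B$-disks interlock around the handles of $\Sigma_D$, and it is not a priori clear that a single cyclic order of half-planes around the $z$-axis is compatible with the adjacency of all state disks simultaneously; a reduction along each handle, perhaps by resolving or repositioning a carefully chosen crossing incident to a pair of adjacent $A$- and $B$-disks whose edge contributes to the handle structure, may be required. A reasonable fallback is induction on $g_T(L)$: the base cases $g_T(L) = 0$ and $g_T(L) = 1$ are handled by the Bae--Park and Jin--Park bounds respectively, and the inductive step calls for a local move that decreases $g_T$ by one while controlling how $c$ and $\alpha$ change. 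The partial results quoted in the abstract (adequate links, positive 3-braids, torus links, Kanenobu knots) should suggest what such moves look like in practice.
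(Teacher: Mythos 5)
The paper does not prove Conjecture~\ref{conj:c+2-a_gT} — it is presented as an open conjecture, verified only for alternating links, Turaev genus one links, adequate links, closures of positive 3-braids, torus links, and most Kanenobu knots (the latter three by ad hoc computations of crossing number, arc index, and Turaev genus, not by a single unifying bound). So you are attempting something strictly beyond what the paper does, and you should be evaluated against that standard.

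Your reduction to $(\ast)$ is logically valid, and the observation that $(\ast)$ holds with equality for adequate diagrams is in fact how the paper handles the adequate case (Theorem~\ref{thm:adequate} and Proposition~\ref{prop:adeq}). But the rest of the proposal does not establish $(\ast)$. The Turaev surface construction you sketch is explicitly incomplete: you acknowledge that you do not know how to realize a compatible cyclic ordering of the state disks around the binding when $g_T(D) > 0$, and the ``fallback'' induction on $g_T(L)$ has no inductive step — ``a local move that decreases $g_T$ by one while controlling how $c$ and $\alpha$ change'' is precisely what would need to be invented, and nothing in the paper or in the literature you cite supplies it. There is a further structural issue worth flagging: applying $(\ast)$ to a minimum-crossing diagram $D$ gives $\alpha(L) \leq c(L) + 2 - 2g_T(D)$, which is \emph{strictly stronger} than the conjecture whenever $g_T(D) > g_T(L)$. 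Since it is not known that every link admits a diagram simultaneously minimizing crossing number and Turaev genus, your $(\ast)$ is not a simplification of the conjecture but a strengthening of it, and hence at least as hard. For the same reason, $(\ast)$ cannot be justified by the Thurston--Bennequin route that works in the adequate case: the inequality $\overline{tb}(L) \geq w(D) - |s_A D|$ fails for general diagrams (already a positive kink diagram of the unknot gives $w(D) - |s_A D| = 0 > -1 = \overline{tb}(\text{unknot})$), so K\'alm\'an's lower-bound direction does not extend, and with it the clean derivation $\alpha(L) = -\overline{tb}(L) - \overline{tb}(L^*) \leq |s_A D| + |s_B D|$ collapses outside adequacy. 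In short: the reduction is sound but the target $(\ast)$ is unproven, not obviously true, and at least as difficult as the original conjecture.
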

Turaev \cite{Turaev_1987} showed that $g_T(L)=0$ if and only if $L$ is alternating. Combining this fact with Bae and Park's computation of the arc index of an alternating link and Jin and Park's inequality for the arc index of a non-alternating link establishes Conjecture~\ref{conj:c+2-a_gT} when $L$ is alternating or when $g_T(L)=1$. The computations of Turaev genus and arc index in KnotInfo \cite{knotinfo} confirm Conjecture~\ref{conj:c+2-a_gT} for all knots with at most twelve crossings. In this paper, we prove Conjecture~\ref{conj:c+2-a_gT} for adequate links, closures of positive 3-braids, torus links, and most Kanenobu knots. 

We find a formula for the arc index of an adequate link. A link diagram $D$ is \textit{$A$-adequate} (respectively \textit{$B$-adequate}) if no two arcs in the $A$-resolution ($B$-resolution) of any crossing lie on the same component of $s_A D$ ($s_B D$). A link diagram is \textit{adequate} if it is both $A$- and $B$-adequate, and a link is \textit{adequate} if it has an adequate diagram. Adequate links were defined by Lickorish and Thistlethwaite \cite{Lickorish_Thistlethwaite_1988}.

 Park and Seo \cite{Park_Seo_2000} conjectured a formula for the arc index of an adequate link $L$ involving the quantity $\rho(D)$ whose precise definition uses the Betti numbers of the checkerboard graph and is given in Section~\ref{sec:adeq}. We confirm Park and Seo's conjecture in the following result.
 \begin{theorem}
 \label{thm:adequate}
     Let $L$ be a non-split adequate link with adequate diagram $D$. Then
     \[\alpha(L) = |s_A D| + |s_B D| =  c(L)+ 2\rho(D).\]
 \end{theorem}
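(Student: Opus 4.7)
The plan is to prove the equality $\alpha(L)=|s_AD|+|s_BD|$ by establishing matching upper and lower bounds on $\alpha(L)$, and then to record the identity $|s_AD|+|s_BD|=c(L)+2\rho(D)$ directly from the definition of $\rho(D)$ given in Section~\ref{sec:adeq} together with the Kauffman--Murasugi--Thistlethwaite theorem that any adequate diagram of an adequate link is crossing-minimal, so that $c(D)=c(L)$.

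For the upper bound $\alpha(L)\le|s_AD|+|s_BD|$, I would construct an arc presentation of $L$ directly from $D$, using one half-plane for each circle of $s_AD$ and one for each circle of $s_BD$. At each crossing $c$ of $D$, exactly two $A$-state circles and two $B$-state circles meet locally, so the crossing is realized by an endpoint on each of the four associated half-planes, and the cyclic order of endpoints along each half-plane is dictated by the order in which crossings appear around the corresponding state circle. This is essentially the Bae--Park construction extended from the alternating case: adequacy is exactly what is needed to ensure that the two $A$-circles (respectively $B$-circles) at each crossing are distinct, so that the four half-planes at the crossing are genuinely different and each half-plane carries a single properly embedded arc, yielding an honest arc presentation with $|s_AD|+|s_BD|$ half-planes.

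For the lower bound $\alpha(L)\ge|s_AD|+|s_BD|$, I would invoke the Morton--Beltrami inequality $\alpha(L)\ge \mathrm{breadth}_a F_L(a,z)+2$, where $F_L$ is the Kauffman two-variable polynomial, together with the known computation due to Thistlethwaite that for an adequate diagram $D$ the $a$-breadth of $F_L$ equals $|s_AD|+|s_BD|-2$. Substituting these gives the matching lower bound and, combined with the construction, forces equality throughout.

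The main obstacle I expect is the upper-bound construction. For alternating diagrams the two Kauffman states coincide with the black and white checkerboard regions, which organize the endpoints on each half-plane in an obviously compatible cyclic order; but for a general adequate diagram the $A$- and $B$-state circles can be nested and can share several crossings, so the routing prescription must be compatible across all crossings simultaneously. Handling these interactions so as to produce a bona fide embedding, rather than a multi-arc tangle in some half-plane, will likely require a careful local-to-global argument, perhaps by induction on the number of crossings at which the state-circle structure differs from a checkerboard pattern, or by a direct combinatorial sweep around each state circle that respects the over/under data of $D$.
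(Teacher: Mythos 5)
Your lower bound is correct: combining the Beltrami--Morton inequality $\Span_a F_L(a,z) + 2 \leq \alpha(L)$ with Thistlethwaite's computation $\Span_a F_L = |s_A D| + |s_B D| - 2$ for an adequate diagram $D$ does yield $\alpha(L) \geq |s_A D| + |s_B D|$. This is a legitimate alternative to the paper's route (which uses K\'alm\'an's Thurston--Bennequin computation instead of the Kauffman polynomial), and the two are morally close since the maximal Thurston--Bennequin bound for $A$-adequate links and the Kauffman $a$-degree bound are closely tied.

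The genuine gap is your upper bound. You propose to build an arc presentation directly from $D$ using one half-plane per $A$-state circle and one per $B$-state circle, in the spirit of Bae--Park. You correctly identify this as ``the main obstacle,'' and it is: the Bae--Park construction for alternating diagrams relies on the fact that the $A$- and $B$-state circles coincide with boundaries of checkerboard regions, which forces a consistent cyclic ordering of pages along the binding axis and a coherent routing of each arc of $D$ between exactly one $A$-page and one $B$-page. For a general adequate diagram those state circles are not region boundaries --- they can be nested and can meet a given region many times --- so neither the page ordering nor the arc routing is determined by planar combinatorics, and it is not at all clear how to carry out the ``careful local-to-global argument'' you gesture at. The paper in fact flags this difficulty explicitly: the authors describe an earlier attempt to produce the needed upper bound constructively (via good filtered spanning trees and Proposition~\ref{prop:summary}) and exhibit in Figure~\ref{fig:gt2} an adequate, Turaev-genus-two diagram for which that approach fails. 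The paper avoids the issue entirely by invoking Dynnikov--Prasolov's Theorem~\ref{thm:tb}, which upgrades Inequality~\eqref{eq:tbineq} to the exact identity $\alpha(L) = -\overline{tb}(L) - \overline{tb}(L^*)$; substituting K\'alm\'an's formula $\overline{tb}(L) = w(D) - |s_A D|$ (Theorem~\ref{thm:Aadeq}) and its mirror then gives $\alpha(L) = |s_A D| + |s_B D|$ in one stroke, with no direct construction of an arc presentation needed. Without that exact formula, you are left having to actually construct the arc presentation, and your sketch does not accomplish that. Finally, the second equality $|s_A D| + |s_B D| = c(L) + 2\rho(D)$ is not quite immediate from the definition of $\rho(D)$: it uses $c(D) = c(L)$ (Lickorish--Thistlethwaite, adequate diagrams are minimal) together with Thistlethwaite's identities $|s_A D| = p_0(G_+) + p_1(G_+)$ and $|s_B D| = p_0(G_-) + p_1(G_-)$ in terms of the checkerboard graph, followed by a small computation; your proposal records the conclusion but skips this step.
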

 
The proof that $\alpha(L)=|s_A D| + |s_B D|$ follows from work of K\'alm\'an \cite{Kalman_2008} on the maximum Thurston-Bennequin number of adequate links and from work of Dynnikov and Prasolov \cite{Dynnikov_Prasolov_2013} expressing the arc index $\alpha(L)$ in terms of the maximum Thurston-Bennequin numbers of $L$ and its mirror $L^*$.

 In Theorem~\ref{th:arc_index_closed_positive_3-braids}, we give new upper and lower bounds on the arc index of the closure of a positive $3$-braid. We combine these bounds on arc index with previously known bounds on Turaev genus \cite{Lowrance_2011} to confirm Conjecture~\ref{conj:c+2-a_gT} for the closures of positive 3-braids.

This paper is organized as follows. Section~\ref{sec:pre} gives background information on arc index, Turaev genus, and Khovanov homology. In Section~\ref{sec:adeq}, we prove Theorem~\ref{thm:adequate} and confirm Conjecture~\ref{conj:c+2-a_gT} for adequate links. In Sections~\ref{sec:3braids}, \ref{sec:torus_links}, and \ref{sec:Kanenobu}, we confirm Conjecture~\ref{conj:c+2-a_gT} for the closure of positive $3$-braids, torus links, and most Kanenobu knots, respectively.

\section{Preliminaries}
\label{sec:pre}

In this section, we cover background material on the arc index of a link, the Turaev surface and Turaev genus of a link.

\subsection{Arc index} We represent arc presentations in several different equivalent ways, as shown in Figure~\ref{fig:arcpres}. The top left diagram of Figure~\ref{fig:arcpres} shows an arc presentation depicted as a collection of semicircles attached to a binding. The integer labels on the semicircles indicate the cyclic order of the pages meeting the binding. The top right diagram of Figure~\ref{fig:arcpres}, called a \textit{spoke diagram}, shows the view of the arc presentation from the $xy$-plane, so that the $z$-axis binding becomes a single point. In this representation, each simple arc becomes a line segment, and each line segment is labeled by a pair of integers indicating the two heights at which the simple arc intersects the binding. The bottom left of Figure~\ref{fig:arcpres} shows an arc presentation where each simple arc is three sides of a rectangle. Projecting this rectilinear embedding in the correct way results in the diagram on the bottom right of Figure~\ref{fig:arcpres}, known as a \textit{grid diagram} of $L$.

\begin{figure}[h]
\[\begin{tikzpicture}

\draw[red, thick] (0,.5) -- (0,5.5);

\begin{knot}[	
	consider self intersections,
 	clip width = 6,
 	ignore endpoint intersections = true,
	end tolerance = 2pt
	]
	\flipcrossings{1,2,3};
	\strand[thick] (0,2) arc (-90:90:1.5 cm and 1.5cm);
	\strand[thick] (0,1) arc (-90:90:1.3 cm and 1cm);
	\strand[thick] (0,2) arc (-90:90:.8 cm and 1cm);
	\strand[thick] (0,1) arc (270:90:1.5cm);
	\strand[thick] (0,3) arc (270:90:1cm);
	
\end{knot}

\draw (.8,3.7) node{1};
\draw (1.65, 3.8) node{3};
\draw (1.4,1.5) node{2};
\draw (-1.65,2.5) node{4};
\draw (-1.15,4) node{5};

\begin{scope}[xshift = 7cm,yshift = 3cm]

\draw[thick] (0:0) -- (18:1.7);
\draw (18:2) node {25};
\draw[thick] (0:0) -- (90:1.7);
\draw (90:2) node {14};
\draw[thick] (0:0) -- (162:1.7);
\draw (162:2) node {35};
\draw[thick] (0:0) -- (234:1.7);
\draw (234:2) node {24};
\draw[thick] (0:0) -- (306:1.7);
\draw (306:2) node {13};

\end{scope}


\begin{scope}[yshift = -6cm]

\draw[red, thick] (0,.5) -- (0,5.5);

\begin{knot}[	
	consider self intersections,
 	clip width = 6,
 	ignore endpoint intersections = true,
	end tolerance = 2pt
	]
	\flipcrossings{1,4};
	\strand[thick] (0,1) -- (-2,.5) -- (-2,3.5) -- (0,4) -- (.7,3.5) -- (.7,1.5) -- (0,2) -- (2.5,1.5) -- (2.5,4.5) -- (0,5) -- (-1,4.5) -- (-1,2.5) -- (0,3) -- (1.5,2.5) -- (1.5,.5) -- (0,1);
	
\end{knot}
\draw[thick] (.7,1.5) -- (0,2);

\end{scope}


\begin{scope}[xshift = 4.5cm, yshift = -6cm]

\begin{knot}[	
	consider self intersections,
 	clip width = 8,
 	ignore endpoint intersections = true,
	end tolerance = 2pt
	]
	\flipcrossings{2};
	\strand[thick] (1,1) -- (4,1) -- (4,3) -- (2,3) -- (2,5) -- (5,5) -- (5,2) -- (3,2) -- (3,4) -- (1,4) -- (1,1);
	
\end{knot}

\end{scope}

\end{tikzpicture}\]
\caption{Four different ways of representing the trefoil as an arc presentation.}
\label{fig:arcpres}
\end{figure}
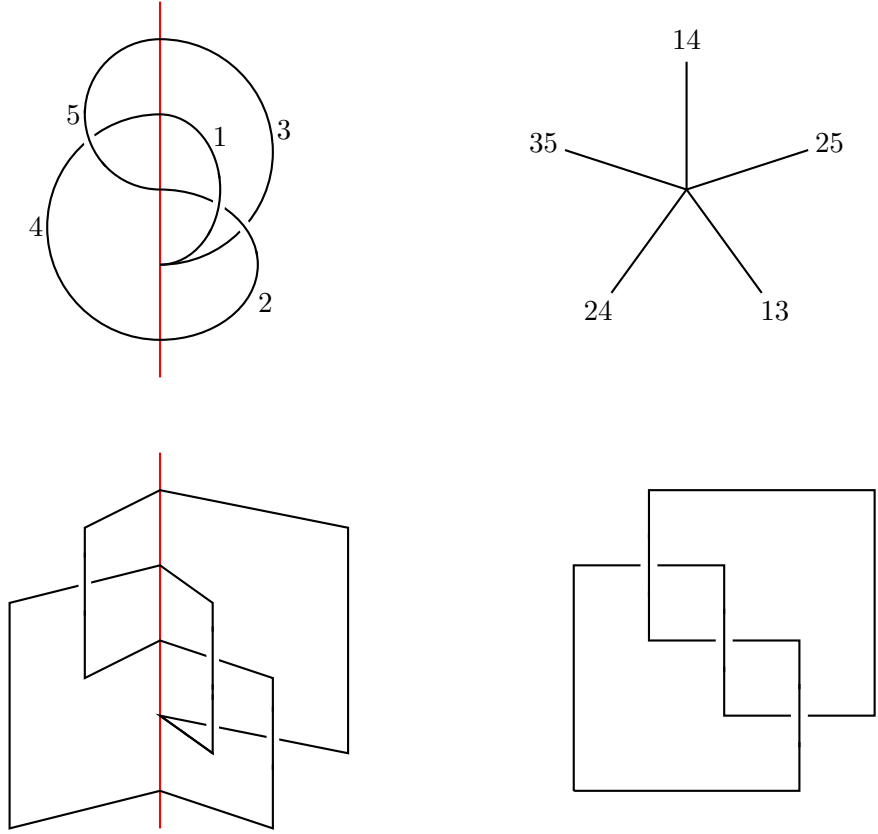

Brunn \cite{Brunn_1913} introduced arc presentations in the late nineteenth century. Nearly a century later, Birman and Menasco \cite{Birman_Menasco_1994} used arc presentations to study the braid index of satellite links and also defined the arc index of a link. Cromwell \cite{Cromwell_1995} proved that the arc index of the connected sum $L_1\# L_2$ of links $L_1$ and $L_2$ is determined by $\alpha(L_1\# L_2)=\alpha(L_1)+\alpha(L_2)-2$, which allows us to restrict our focus to prime links.

Cromwell \cite{Cromwell_1995} also described a set of Reidemeister-like moves on arc presentations such that any two arc presentations of the same link can be connected by a sequence of these Cromwell moves. One Cromwell move increases the number of pages in the arc presentation, while the rest of the moves either decrease or do not change the number of pages. Dynnikov \cite{Dynnikov_2006} proved that any arc presentation of the unknot can be transformed into the two-page arc presentation of the unknot using only Cromwell moves that preserve or decrease the number of pages, yielding an algorithm to detect the unknot.

Because the arc index of a link is defined as a minimum, its computation often relies on a lower bound. Beltrami and Morton \cite{Beltrami_Morton_1998} proved that the difference between the maximum and minimum $a$-degree of the two-variable Kauffman polynomial $F_L(a,z)$ gives the following lower bound on arc index:
\begin{equation}
    \label{eq:KauffmanBound}
    \Span_a F_L(a,z) + 2 \leq \alpha(L).
\end{equation}
Nutt \cite{Nutt_1997} previously proved a version of Inequality \eqref{eq:KauffmanBound} for the Kauffman polynomial with coefficients reduced modulo 2. Let $H(L)$ be the Khovanov homology (with coefficients in $\mathbb{Z}$) of $L$, first discovered by Khovanov in \cite{Khovanov_2000}, and let $H^{i,j}(L)$ denote the summand in homological grading $i$ and polynomial grading $j$. Ng \cite{Ng_2012} proved that Khovanov homology gives the following lower bound:
\begin{equation}
    \label{eq:KhBound}
    \max\{j-i~|~H^{i,j}(L)\neq 0\} - \min\{j-i~|~H^{i,j}(L)\neq 0\} \leq \alpha(L).
\end{equation}

Both lower bounds \eqref{eq:KauffmanBound} and \eqref{eq:KhBound} can be viewed as consequences of a relationship between arc presentations and Legendrian knot theory. A \textit{Legendrian link} is a link $\mathcal{L}$ in $\mathbb{R}^3$ that is everywhere tangent to the standard contact structure $dz-y\;dx$. Projecting a Legendrian link to the $xz$-plane results in a \textit{Legendrian front} diagram $F$ with the following properties. The front $F$ has no vertical tangencies, and at every transverse double point the segment with a negative slope passes over the segment with a positive slope.

A grid diagram $D$ of a link $L$ can be transformed into a Legendrian front $F_D$ whose underlying classical link type is $L$ by rotating $D$ counterclockwise by $\pi/4$ and then smoothing north and south corners. Similarly, the grid diagram $D$ can be transformed into a Legendrian front $F_{D^*}$ whose underlying classical link type is the mirror $L^*$ by rotating $D$ clockwise by $\pi/4$, changing every crossing, and then smoothing north and south corners. An example is shown in Figure~\ref{fig:front}.
\begin{figure}[h]
\[\begin{tikzpicture}[scale = .7]

\begin{scope}[rotate=45]
\begin{knot}[	
	consider self intersections,
 	clip width = 8,
 	ignore endpoint intersections = false,
	end tolerance = 1pt
	]
	\flipcrossings{3,4,5};
	\strand[thick] (1.25,1) to [out = 0, in = 135]
	(4,1) to [out = 135, in = 270]
	(4,2.75) to [out = 90, in = 0]
	(3.75,3) to [out = 180, in = 0]
	(2.25,3) to [out = 180, in = 270]
	(2,3.25) to [out = 90, in = 315]
	(2,5) to [out = 315, in= 180]
	(4.75,5) to [out = 0, in = 90]
	(5,4.75) to [out =270, in = 135]
	(5,2) to [out = 135, in = 0]
	(3.25,2) to [out = 180, in = 270]
	(3,2.25) to [out = 90, in = 270]
	(3,3.75) to [out = 90, in = 0]
	(2.75,4) to [out = 180, in = 315]
	(1,4) to [out = 315, in = 90]
	(1,1.25) to [out = 270, in = 180]
	(1.25,1);

\end{knot}
\end{scope}

\begin{scope}[rotate = -45, yshift = 5cm, xshift = -1cm]
\begin{knot}[	
	consider self intersections,
 	clip width = 8,
 	ignore endpoint intersections = true,
	end tolerance = 10pt
	]
	\flipcrossings{1,3};
	\strand[thick] (1,1) to [out = 45, in = 180]
	(3.75,1) to [out = 0, in = 270]
	(4,1.25) to [out = 90, in = 225]
	(4,3) to [out = 225, in = 45]
	(2,3) to [out = 45, in = 270]
	(2,4.75) to [out = 90, in = 180]
	(2.25,5) to [out = 0, in = 225]
	(5,5) to [out = 225, in = 90]
	(5,2.25) to [out = 270, in = 0]
	(4.75, 2) to [out = 180, in = 45]
	(3,2) to [out = 45, in = 225]
	(3,4) to [out = 225, in = 0]
	(1.25,4) to [out = 180, in = 90]
	(1,3.75) to [out = 270, in= 45]
	(1,1);
	
	
\end{knot}
\end{scope}
\draw (0,1) node {$F_D$};
\draw (7,1) node{$F_{D^*}$};

\end{tikzpicture}\]
\caption{The Legendrian front diagrams $F_D$ and $F_{D^*}$ obtained from the grid diagram in Figure~\ref{fig:arcpres}.}
\label{fig:front}
\end{figure}
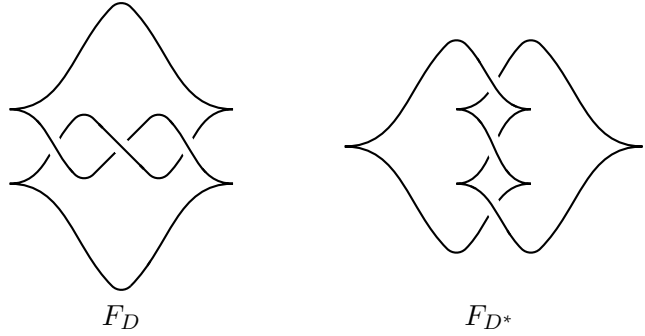

The Thurston-Bennequin number $tb(\mathcal{L})$ of the Legendrian link $\mathcal{L}$ can be readily computed from any front diagram $F$ of $\mathcal{L}$ via the formula
\[tb(\mathcal{L}) = tb(F) =  w(F) - \operatorname{cusp}(F),\]
where $w(F)$ is the writhe of $F$ and $ \operatorname{cusp}(F)$ is the number of right cusps of $F$. The \textit{maximum Thurston-Bennequin number} $\overline{tb}(L)$ of a classical link $L$ is defined as
\[\overline{tb}(L) = \max\{tb(\mathcal{L})~|~\mathcal{L}~\text{is a Legendrian representative of}~L\}.\]

The maximum Thurston-Bennequin number and arc index of a link are related as follows. Let $D$ be a grid diagram of a link $L$ with $\alpha(L)$ vertical line segments. Each segment can be associated with its top vertex. This vertex forms either an upper-right corner, giving rise to a right cusp in $F_D$, or an upper-left corner, giving rise to a right cusp in $F_{D^*}$. Then the total number of right cusps in $F_D$ and $F_{D^*}$ coincides with $\alpha(L)$. Therefore
\begin{align}
\begin{split}
\label{eq:tbineq}
\alpha(L)  = & \;  \operatorname{cusp}(F_D) +  \operatorname{cusp}(F_{D^*}) \\
 = & \; -w(F_D)+ \operatorname{cusp}(F_D) - w(F_{D^*})+ \operatorname{cusp}(F_{D^*}) \\
 = & \; -tb(F_D) - tb(F_{D^*}) \\
 \geq & \;  -\overline{tb}(L)-\overline{tb}(L^*) .
\end{split}
\end{align}
Dynnikov and Prasolov  showed that equality holds in Inequality \eqref{eq:tbineq}.
\begin{theorem}[{\cite[Cor. 3]{Dynnikov_Prasolov_2013}}]
\label{thm:tb} For any link $L$,
 \[\alpha(L) = -\overline{tb}(L) - \overline{tb}(L^*).\]
 \end{theorem}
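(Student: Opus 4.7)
The lower bound $\alpha(L) \geq -\overline{tb}(L) - \overline{tb}(L^*)$ is already established in Inequality \eqref{eq:tbineq}: every grid diagram $D$ of $L$ satisfies $\alpha(D) = -tb(F_D) - tb(F_{D^*})$, and each summand on the right is bounded above by the corresponding maximum Thurston--Bennequin number. My plan is to prove the reverse inequality by exhibiting a single grid diagram $D$ of $L$ whose associated Legendrian fronts $F_D$ and $F_{D^*}$ \emph{simultaneously} realize $\overline{tb}(L)$ and $\overline{tb}(L^*)$.

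The starting point is to pick Legendrian representatives $\mathcal{L}$ of $L$ and $\mathcal{L}'$ of $L^*$ with $tb(\mathcal{L})=\overline{tb}(L)$ and $tb(\mathcal{L}')=\overline{tb}(L^*)$. Via standard Legendrian isotopies I would bring each of their fronts $F$ and $F'$ to a \emph{rectangular} form, meaning a piecewise-linear front whose edges all have slope $\pm 1$ and whose cusps appear only at corner positions. A rectangular front with $n$ right cusps is precisely the $\pi/4$-rotated (and corner-smoothed) image of a grid diagram with $n$ upper-right corners, so after this reduction $F$ records a candidate set of NE corners for a grid diagram of $L$ and $F'$, after the mirror rotation, records a candidate set of NW corners.

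The core task is then to merge these two rectangular fronts into a single grid diagram of $L$. The horizontal and vertical rulings of $F$ and $F'$ each give cyclic sequences of levels; one wants to interleave these sequences to produce an $N \times N$ grid (with $N=\operatorname{cusp}(F)+\operatorname{cusp}(F')$) containing exactly one NE, one NW, one SE, and one SW corner in every row and column, reproducing the link type of $L$. If such a merged grid diagram $D$ can be constructed, it has $N=-tb(\mathcal{L})-tb(\mathcal{L}')=-\overline{tb}(L)-\overline{tb}(L^*)$ vertical segments, which yields $\alpha(L)\leq -\overline{tb}(L)-\overline{tb}(L^*)$ and completes the proof.

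The main obstacle is the merging step. A priori the two separately chosen Legendrians have no reason to share a compatible horizontal-level structure, so one must have enough freedom -- from $tb$-preserving Legendrian Reidemeister moves, stabilizations that cancel in pairs, or a direct contact-topological construction via a compatible open book decomposition -- to realign them without decreasing either $tb$-value. Carrying out this realignment while preserving the underlying link type is the technical heart of the theorem, and I would invoke Dynnikov and Prasolov's construction rather than reproduce it.
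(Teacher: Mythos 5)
This theorem is stated in the paper as a citation of Dynnikov and Prasolov; the paper itself gives no proof, only the easy half as Inequality~\eqref{eq:tbineq}. So your proposal cannot be compared to a paper proof, but it can be assessed on its own merits.

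You correctly isolate what needs to be shown: the lower bound $\alpha(L) \geq -\overline{tb}(L) - \overline{tb}(L^*)$ follows from Inequality~\eqref{eq:tbineq}, and the reverse requires a single grid diagram whose two associated Legendrian fronts simultaneously realize $\overline{tb}(L)$ and $\overline{tb}(L^*)$. However, the ``merge two independently chosen maximal Legendrians'' strategy has a structural flaw. The fronts $F_D$ and $F_{D^*}$ of one grid diagram $D$ are not independent objects that one stitches together afterward: both are read off from the \emph{same} collection of vertical and horizontal segments, so specifying the NE corners of $F_D$ and the NW corners of $F_{D^*}$ is specifying the very same grid twice. Choosing a Legendrian $\mathcal{L}$ of $L$ with $tb(\mathcal{L})=\overline{tb}(L)$ and a Legendrian $\mathcal{L}'$ of $L^*$ with $tb(\mathcal{L}')=\overline{tb}(L^*)$ gives you two rectangular fronts that, in general, impose incompatible constraints on the underlying grid. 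The real difficulty — the one Dynnikov and Prasolov's monotone simplification theorem resolves — is precisely that increasing $tb(F_D)$ by grid moves might a priori force $tb(F_{D^*})$ down, so one must prove that the maximum of the sum $tb(F_D)+tb(F_{D^*})$ over grid diagrams equals $\overline{tb}(L)+\overline{tb}(L^*)$, not merely that each summand achieves its own max over some diagram. Your paragraph on the ``merging step'' gestures at this obstruction, but then defers to Dynnikov and Prasolov to carry it out; since that deferred step is the entire content of the theorem, the proposal is not a proof but a (correct) reduction of the statement to itself.
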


K\'alm\'an computed the maximum Thurston-Bennequin number of an $A$-adequate link.
\begin{theorem}[{\cite[Cor. 6]{Kalman_2008}}]
\label{thm:Aadeq}
 Let $D$ be an $A$-adequate diagram of the link $L$. The maximum Thurston-Bennequin number of $L$ is $\overline{tb}(L) = w(D) - |s_A D|$, where $w(D)$ is the writhe of $D$ and $|s_A D|$ is the number of components in the all-$A$ Kauffman state of $D$.
 \end{theorem}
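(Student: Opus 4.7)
The plan is to bracket $\overline{tb}(L)$ between matching lower and upper bounds extracted directly from $D$.

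For the lower bound $\overline{tb}(L) \geq w(D) - |s_A D|$, I would construct a Legendrian front $F$ of $L$ with writhe $w(F) = w(D)$ and exactly $|s_A D|$ right cusps. The all-$A$ resolution $s_A D$ is a disjoint union of $|s_A D|$ planar circles, which I realize as stacked standard Legendrian unknots, each contributing one left cusp and one right cusp. Each crossing of $D$ corresponds to a pair of arcs in $s_A D$ that were joined by an $A$-smoothing; I reinsert the crossing as a Legendrian double point of matching sign between the two strands without introducing additional cusps. The $A$-adequacy hypothesis, which says that the two arcs at every $A$-resolved crossing lie on distinct components of $s_A D$, is precisely what allows this reinsertion to be performed cleanly across the diagram, since each crossing is reinstated between two genuinely different Legendrian unknots. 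The resulting front satisfies $tb(F) = w(F) - \operatorname{cusp}(F) = w(D) - |s_A D|$.

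For the upper bound $\overline{tb}(L) \leq w(D) - |s_A D|$, I would invoke Rudolph's Bennequin-type inequality arising from the Kauffman polynomial: for any Legendrian representative $\mathcal{L}$ of $L$,
\[ tb(\mathcal{L}) \leq -\min\deg_a F_L(a,z) - 1. \]
It then suffices to verify that $\min\deg_a F_L(a,z) = |s_A D| - w(D) - 1$ whenever $D$ is $A$-adequate. This is the Kauffman-polynomial analogue of Thistlethwaite's computation of the extreme degrees of the Jones polynomial: expanding $F_L(a,z)$ as a state sum, one identifies $s_A D$ and its one-step $B$-neighbors as the sole contributors to the minimal $a$-degree, and one verifies via the adequacy hypothesis that these contributions do not cancel.

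The main obstacle is this non-cancellation step in identifying $\min\deg_a F_L(a,z)$. The argument parallels Thistlethwaite's for the Jones polynomial but must be carried out for the richer Kauffman state sum, where more state-types contribute to the extremal $a$-degree and so more potential cancellations must be ruled out. The $A$-adequacy condition enters precisely to guarantee that no such cancellation takes place at the extreme, pinning down $\min\deg_a F_L(a,z)$ exactly. Once this is established, the two bounds match and yield $\overline{tb}(L) = w(D) - |s_A D|$.
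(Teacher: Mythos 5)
The paper does not prove Theorem~\ref{thm:Aadeq}; it imports it from K\'alm\'an's Corollary 6 as a black-box input (alongside Theorem~\ref{thm:tb}), and then immediately uses it in the proof of Theorem~\ref{thm:adequate}. So there is no internal proof to compare against, and your proposal must be measured against K\'alm\'an's original argument.

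Your two-sided strategy --- realize $tb = w(D) - |s_A D|$ by an explicit Legendrian front, then cap it with the Kauffman-polynomial Bennequin inequality and a Thistlethwaite-style estimate on the extreme $a$-degree --- is indeed the shape of K\'alm\'an's proof, so the high-level plan is sound. The problem is that you have the difficulty in the wrong place. For the upper bound, the Kauffman-polynomial inequality $tb(\mathcal{L}) \leq -\operatorname{mindeg}_a F_L - 1$ (with your conventions) holds for every Legendrian representative, and the one-sided degree estimate $\operatorname{mindeg}_a F_L \geq |s_A D| - w(D) - 1$ already follows from the state sum for an arbitrary diagram by a straight degree count on each state; you do not need the non-cancellation half of Thistlethwaite's theorem, and hence do not need $A$-adequacy, to get $\overline{tb}(L) \leq w(D) - |s_A D|$. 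By insisting on equality of the extremal $a$-degree you have made the upper bound look harder than it is. Conversely, the lower bound, which you dispatch in a few sentences, is where K\'alm\'an actually works. ``Stacking standard Legendrian unknots'' discards the planar nesting pattern of the $A$-state circles, which must be respected; at every double point of a front the over/under behavior is rigidly determined by the slopes of the two branches, so ``reinserting each crossing as a Legendrian double point of matching sign without introducing additional cusps'' is precisely the claim that needs a construction, not an assertion; and your explanation that $A$-adequacy is ``precisely what allows this'' is a plausible heuristic (a crossing with both arcs on one state circle would force a self-tangency that generically costs cusps) but is not an argument that handles all $A$-adequate diagrams uniformly. In short: the skeleton matches the source, but the step you treat as routine is the theorem's real content, and the step you treat as the main obstacle is softer than you think.
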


We now describe a procedure to transform any link diagram into an arc presentation. Consider the diagram $D$ as a $4$-regular graph whose vertices are the crossings and whose edges are arcs of $D$ going between the crossings. As a graph, $D$ has $c(D)$ vertices and $2c(D)$ edges. Let $T$ be a spanning tree of $D$, and let $\eta$ be the boundary of a regular neighborhood of $T$ in the plane. Since $T$ is a tree, $\eta$ is a simple closed curve that we isotope to be a circle. All crossings of $D$ are on the interior of $\eta$. Figure~\ref{fig:tree1} shows an example of a spanning tree and its regular neighborhood.
\begin{figure}[h]
\[\begin{tikzpicture}[scale=.7]


\begin{scope}[xshift = -8cm]

\draw [black,ultra thick,domain=125:235] plot ({cos(\x)}, {sin(\x)});
  \draw [black, ultra thick,domain=5:115] plot ({cos(\x)}, {sin(\x)});
 \draw [black,thick,domain= -15:-5] plot ({cos(\x)}, {sin(\x)});
 \draw [black,thick,domain= -45:-15] plot ({cos(\x)}, {sin(\x)});
 \draw [black, thick, domain = -45:-75] plot ({cos(\x)}, {sin(\x)});
  \draw [black, thick, domain = 255:285] plot ({cos(\x)}, {sin(\x)});
   \draw [black, thick, domain = 245:255] plot ({cos(\x)}, {sin(\x)});
  
   \draw [black, ultra thick, domain = 125:235] plot ({2*cos(\x)}, {2*sin(\x)});
    \draw [black, ultra thick,domain=5:115] plot ({2*cos(\x)}, {2*sin(\x)});
     \draw [black,thick,domain= -15:-5] plot ({2*cos(\x)}, {2*sin(\x)});
 \draw [black,thick,domain= -50:-15] plot ({2*cos(\x)}, {2*sin(\x)});
 \draw [black, thick, domain = -50:-70] plot ({2*cos(\x)}, {2*sin(\x)});
  \draw [black, thick, domain = 255:290] plot ({2*cos(\x)}, {2*sin(\x)});
   \draw [black, thick, domain = 245:255] plot ({2*cos(\x)}, {2*sin(\x)});
   
   \draw[black, thick] (240:2.2) to [out = 60, in = 240]
   (240:1) to [out = 60, in = 120]
   (-60:.9);
   \draw[black, ultra thick] (240:2) to [out = 60, in = 240]
   (240:1) to [out = 60, in = 120]
   (-60:.9);
   \draw[black, thick] (240:2.2) to [out = 240, in = 180, looseness=1.5] (180:2.3);
   \draw[black,thick] (180:2.3) to [out = 0, in =180] (180:2.1);
   \draw[black,thick] (180:1.9) -- (180:1.7);
   \draw[black,thick] (180:1.7) -- (180:1.3);
   \draw[black,thick] (180:1.3) -- (180:1.1);
   \draw [black,thick] (180:.9) -- (180:.7);
   \draw [black,thick] (120:.8) -- (120:1.2);
   \draw [black,thick] (180:.7) to [out = 0, in = -60] (120:.8);
   \draw[black,thick] (120:1.2) -- (120:1.8);
   \draw[black,thick] (120:1.8) -- (120:2.2);
   \draw[black,thick] (120:2.2) to [out = 120, in = 60,looseness=1.5] (60:2.3);
   \draw[black, thick] (60:2.3) -- (60:2.1);
   \draw[black, thick] (60:1.9) -- (60:1.7);
   \draw[black,thick] (60:1.7) -- (60:1.3);
   \draw[black,thick] (60:1.3) -- (60:1.1);
   \draw[black,thick] (60:.9) -- (60:.7);
   \draw[black,thick] (0:.8) -- (0:1.2);
   \draw[black,thick] (60:.7) to [out = 240, in = 180] (0:.8);
   \draw[black,ultra thick] (300:1.1) -- (300:1.9);
   \draw[black,thick] (300:2.1) -- (300:2.3);
   \draw[black,thick] (0:1.8) -- (0:2.2);
   \draw[black,thick] (0:1.2) -- (0:1.8);
   \draw[black,thick] (0:2.2) to [out = 0, in = 300, looseness=1.5] (300:2.3);

\end{scope}


\draw[densely dotted, black!70!white, thick] (-15:1) to [out = -15, in = 270]
(0:1.3) to [out = 90, in = -30]
(60:1.3) to [out = 150, in = 30]
(120:1.3) to [out = 210, in = 90]
(180:1.3) to [out = 270, in = 150]
(225:1.3) to [out = 330, in = 60, looseness=1.5]
(235:1.5) to [out = 240, in = 330, looseness=1.5]
(225:1.7) to [out = 150, in = 270]
(180:1.7) to [out = 90, in = 210]
(120:1.7) to [out = 30, in = 150]
(60:1.7) to [out = -30, in = 90]
(0:1.7) to [out = 270, in = 180]
(-10:2) to [out = 0, in = 270]
(0: 2.3) to [out = 90, in = -30]
(60:2.3) to [out = 150, in = 30]
(120:2.3) to [out = 210, in = 90]
(180:2.3) to [out = 270, in = 150]
(240:2.3) to [out = 330, in = 250]
(250:2) to [out = 70, in = 250]
(255:1) to [out = 70, in = 105]
(-75:1) to [out = 285, in = 105]
(-70:2) to [out = 285, in = 210]
(300:2.3) to [out = 30, in = 285]
(310:2) to [out = 105, in = 285]
(315:1) to [out = 105, in = 0]
(270:.4) to [out = 180, in = 270, looseness = 1.5]
(180:.7) to [out = 90, in = 210]
(120:.8) to [out = 30, in = 150]
(60:.7) to [out = -30, in = 90]
(0:.8) to [out = 270, in = 180]
(-15:1);

 \draw [blue,thick,domain=125:235] plot ({cos(\x)}, {sin(\x)});
  \draw [blue,thick,domain=5:115] plot ({cos(\x)}, {sin(\x)});
 \draw [blue,thick,domain= -15:-5] plot ({cos(\x)}, {sin(\x)});
 \draw [red,thick,domain= -45:-15] plot ({cos(\x)}, {sin(\x)});
 \draw [blue, thick, domain = -45:-75] plot ({cos(\x)}, {sin(\x)});
  \draw [red, thick, domain = 255:285] plot ({cos(\x)}, {sin(\x)});
   \draw [blue, thick, domain = 245:255] plot ({cos(\x)}, {sin(\x)});
  
   \draw [blue, thick, domain = 125:235] plot ({2*cos(\x)}, {2*sin(\x)});
    \draw [blue,thick,domain=5:115] plot ({2*cos(\x)}, {2*sin(\x)});
     \draw [blue,thick,domain= -10:-5] plot ({2*cos(\x)}, {2*sin(\x)});
 \draw [red,thick,domain= -50:-10] plot ({2*cos(\x)}, {2*sin(\x)});
 \draw [blue, thick, domain = -50:-70] plot ({2*cos(\x)}, {2*sin(\x)});
  \draw [red, thick, domain = 250:290] plot ({2*cos(\x)}, {2*sin(\x)});
   \draw [blue, thick, domain = 245:250] plot ({2*cos(\x)}, {2*sin(\x)});
   
   \draw[blue, thick] (240:2.3) to [out = 60, in = 240]
   (240:1) to [out = 60, in = 120]
   (-60:.9);
   \draw[red, thick] (240:2.3) to [out = 240, in = 180, looseness=1.5] (180:2.3);
   \draw[blue,thick] (180:2.3) to [out = 0, in =180] (180:2.1);
   \draw[blue,thick] (180:1.9) -- (180:1.7);
   \draw[red,thick] (180:1.7) -- (180:1.3);
   \draw[blue,thick] (180:1.3) -- (180:1.1);
   \draw [blue,thick] (180:.9) -- (180:.7);
   \draw [blue,thick] (120:.8) -- (120:1.3);
   \draw [red,thick] (180:.7) to [out = 0, in = -60] (120:.8);
   \draw[red,thick] (120:1.3) -- (120:1.7);
   \draw[blue,thick] (120:1.7) -- (120:2.3);
   \draw[red,thick] (120:2.3) to [out = 120, in = 60,looseness=1.5] (60:2.3);
   \draw[blue, thick] (60:2.3) -- (60:2.1);
   \draw[blue, thick] (60:1.9) -- (60:1.7);
   \draw[red,thick] (60:1.7) -- (60:1.3);
   \draw[blue,thick] (60:1.3) -- (60:1.1);
   \draw[blue,thick] (60:.9) -- (60:.7);
   \draw[blue,thick] (0:.8) -- (0:1.3);
   \draw[red,thick] (60:.7) to [out = 240, in = 180] (0:.8);
   \draw[blue,thick] (300:1.1) -- (300:1.9);
   \draw[blue,thick] (300:2.1) -- (300:2.3);
   \draw[blue,thick] (0:1.7) -- (0:2.3);
   \draw[red,thick] (0:1.3) -- (0:1.7);
   \draw[red,thick] (0:2.3) to [out = 0, in = 300, looseness=1.5] (300:2.3);

\end{tikzpicture}\]
\caption{On the left, a spanning tree  $T$ of a link diagram $D$ is indicated by bold edges. On the right, the portion of $D$ in a regular neighborhood of $T$ is blue, while the rest of $D$ is red. The dotted curve $\eta$ is the boundary of a regular neighborhood of $T$.}
\label{fig:tree1}
\end{figure}
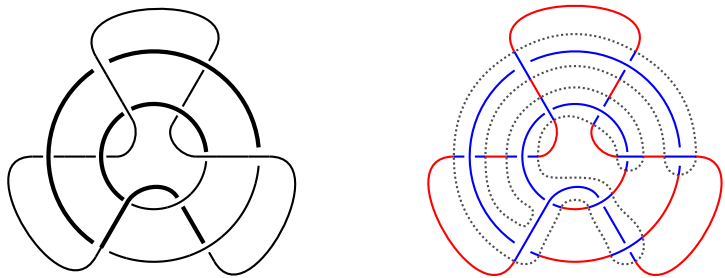

Assign an integer height to each arc on the interior of $\eta$ as follows. Arbitrarily pick any arc and arbitrarily assign it a height. Now iteratively choose an arc $\gamma$ that crosses some arc $\beta$ that has already been assigned a height. If $\gamma$ passes over $\beta$, the height of $\gamma$ is defined to be one greater than the largest height so far, and if $\gamma$ passes under $\beta$, the height of $\gamma$ is defined to be one less than the smallest height so far. Continue in this fashion until every arc has been assigned a height.  Figure~\ref{fig:tree2} shows the example from Figure~\ref{fig:tree1} where $\eta$ is a circle and each interior arc has been assigned a height.

\begin{figure}[h]
\[\begin{tikzpicture}[scale=.6]

\begin{knot}[
	consider self intersections,
 	clip width = 5,
 	ignore endpoint intersections = false,
	end tolerance = 1pt
	]
	\flipcrossings{1,3,5,6,8,10,11,12};
	\strand[blue, thick] (0:4) to [out= 180, in = {180+4*360/26}] ({4*360/26}:4);
	\strand[blue, thick] ({360/26}:4) to [out= {180+360/26}, in = {180+3*360/26}, looseness=1] ({3*360/26}:4);
	\strand[blue, thick] ({2*360/26}:4) to [out= {180+2*360/26}, in = {180+18*360/26}] ({18*360/26}:4);
	\strand[blue, thick] ({5*360/26}:4) to [out= {180+5*360/26}, in = {180+25*360/26}, looseness = .9] ({25*360/26}:4);
	\strand[blue, thick] ({6*360/26}:4) to [out= {180+6*360/26}, in = {180+24*360/26}] ({24*360/26}:4);
	\strand[blue, thick] ({7*360/26}:4) to [out= {180+7*360/26}, in = {180+15*360/26}] ({15*360/26}:4);
	\strand[blue, thick] ({8*360/26}:4) to [out= {180+8*360/26}, in = {180+14*360/26}] ({14*360/26}:4);
	\strand[blue, thick] ({9*360/26}:4) to [out= {180+9*360/26}, in = {180+13*360/26}] ({13*360/26}:4);
	\strand[blue, thick] ({10*360/26}:4) to [out= {180+10*360/26}, in = {180+12*360/26}] ({12*360/26}:4);
	\strand[blue, thick] ({11*360/26}:4) to [out= {180+11*360/26}, in = {180+17*360/26}] ({17*360/26}:4);
	\strand[blue, thick] ({16*360/26}:4) to [out= {180+16*360/26}, in = {180+21*360/26}] ({21*360/26}:4);
	\strand[blue, thick] ({19*360/26}:4) to [out= {180+19*360/26}, in = {180+23*360/26}] ({23*360/26}:4);
	\strand[blue, thick] ({20*360/26}:4) to [out= {180+20*360/26}, in = {180+22*360/26}] ({22*360/26}:4);
\end{knot}

\begin{scope}[red,thick]

\draw (0:4) to [out = 0, in = {360/26}, looseness=1.5] ({360/26}:4);
\draw ({2*360/26}:4) to [out = {2*360/26}, in = {23*360/26}, looseness=1.5] ({23*360/26}:4);
\draw ({3*360/26}:4) to [out = {3*360/26}, in = {10*360/26}, looseness=3] ({10*360/26}:4);
\draw ({4*360/26}:4) to [out = {4*360/26}, in = {9*360/26}, looseness=2] ({9*360/26}:4);
\draw ({5*360/26}:4) to [out = {5*360/26}, in = {8*360/26}, looseness=1.5] ({8*360/26}:4);
\draw ({6*360/26}:4) to [out = {6*360/26}, in = {7*360/26}, looseness=1.5] ({7*360/26}:4);
\draw ({11*360/26}:4) to [out = {11*360/26}, in = {16*360/26-90}, looseness=1.5]
({16*360/26}:6.5) to [out = {16*360/26+90}, in = {22*360/26}, looseness=1.5] ({22*360/26}:4);
\draw ({12*360/26}:4) to [out = {12*360/26}, in = {16*360/26-90}, looseness=1.5]
({16*360/26}:5.5) to [out = {16*360/26+90}, in = {21*360/26}, looseness=1.5] ({21*360/26}:4);
\draw ({13*360/26}:4) to [out = {13*360/26}, in = {14*360/26}, looseness=1.5] ({14*360/26}:4);
\draw ({15*360/26}:4) to [out = {15*360/26}, in = {16*360/26}, looseness=1.5] ({16*360/26}:4);
\draw ({17*360/26}:4) to [out = {17*360/26}, in = {20*360/26}, looseness=1.5] ({20*360/26}:4);
\draw ({18*360/26}:4) to [out = {18*360/26}, in = {19*360/26}, looseness=1.5] ({19*360/26}:4);
\draw ({24*360/26}:4) to [out = {24*360/26}, in = {25*360/26}, looseness=1.5] ({25*360/26}:4);

\end{scope}

\draw ({0*360/26-2}:4.4) node{\footnotesize{2}};
\draw ({1*360/26+2}:4.4) node{\footnotesize{10}};
\draw ({2*360/26+2.5}:4.4) node{\footnotesize{6}};
\draw ({3*360/26-3}:4.4) node{\footnotesize{10}};
\draw ({4*360/26-2}:4.4) node{\footnotesize{2}};
\draw ({5*360/26-2}:4.4) node{\footnotesize{8}};
\draw ({6*360/26-2}:4.4) node{\footnotesize{4}};
\draw ({7*360/26+2}:4.4) node{\footnotesize{3}};
\draw ({8*360/26+2}:4.4) node{\footnotesize{9}};
\draw ({9*360/26+2}:4.4) node{\footnotesize{1}};
\draw ({10*360/26+3}:4.4) node{\footnotesize{11}};
\draw ({11*360/26-3}:4.4) node{\footnotesize{5}};
\draw ({12*360/26-3}:4.4) node{\footnotesize{11}};
\draw ({13*360/26-2.5}:4.4) node{\footnotesize{1}};
\draw ({14*360/26+2.5}:4.4) node{\footnotesize{9}};
\draw ({15*360/26-2.5}:4.4) node{\footnotesize{3}};
\draw ({16*360/26+2.5}:4.4) node{\footnotesize{7}};
\draw ({17*360/26-2.5}:4.4) node{\footnotesize{5}};
\draw ({18*360/26-2.5}:4.4) node{\footnotesize{6}};
\draw ({19*360/26+3}:4.4) node{\footnotesize{12}};
\draw ({20*360/26+3}:4.4) node{\footnotesize{13}};
\draw ({21*360/26+2.5}:4.4) node{\footnotesize{7}};
\draw ({22*360/26+3.5}:4.4) node{\footnotesize{13}};
\draw ({23*360/26-2.5}:4.4) node{\footnotesize{12}};
\draw ({24*360/26-2.5}:4.4) node{\footnotesize{4}};
\draw ({25*360/26+2.5}:4.4) node{\footnotesize{8}};

\draw  (0,0) circle (4cm);

\draw (-4.3,-4.3) node{*};

\end{tikzpicture}\]
\caption{An isotopy of the diagram from Figure~\ref{fig:tree1} on the sphere yields the above diagram. The outer face of the diagram from Figure~\ref{fig:tree1} is labeled by *. Each interior arc has been assigned a height.}
\label{fig:tree2}
\end{figure}

The exterior of $\eta$ consists of $c(D)+1$ arcs with no crossings. The next step in the process of creating an arc presentation is to transform the diagram so that each exterior arc is unnested. Each exterior arc is labeled by the two heights of the interior arcs it meets. Suppose two nested arcs are labeled by heights $\{a,d\}$ and $\{b,c\}$ respectively with $a<d$ and $b<c$, as in Figure~\ref{fig:extarcs}. If $a<\min\{b,c\}$ or $d>\max\{b,c\}$, then a Reidemeister II move makes the two arcs unnested without adding any new exterior arcs. Otherwise, we have $b<a<d<c$. In this case, perform a Reidemeister II move that creates a new exterior arc and a new interior arc. The height $M$ of the new interior arc is one greater than the maximum height. On the exterior, there are now three unnested arcs, labeled by heights $\{a,M\}$, $\{b,c\}$, and $\{M,d\}$. Figure~\ref{fig:extarcs} depicts these isotopies. 
\begin{figure}[h]
\[ \begin{tikzpicture}


\draw[red,thick] (2,0) arc (180:0:.5cm);
\draw[red,thick] (1,0) arc (180:0:1.5cm);
\draw[blue, thick] (1,0) -- (1,-1);
\draw[blue, thick] (2,0) -- (2,-1);
\draw[blue, thick] (3,0) -- (3,-1);
\draw[blue, thick] (4,0) -- (4,-1);
\draw[thick] (.5,0) -- (4.5,0);
\draw (.8,.2) node {$a$};
\draw (1.8,.2) node {$b$};
\draw (3.2,.2) node {$c$};
\draw (3.8,.2) node {$d$};

\draw (2.5,-1.5) node{Initial position};


\begin{scope}[xshift = 7cm]

\draw[thick] (.5,0) -- (4.5,0);
\draw[red,thick] (1,0) arc (180:0:.5cm);
\draw[red,thick] (3,0) arc (180:0:.5cm);
\draw (.8,.2) node {$a$};
\draw (2.2,.2) node {$d$};
\draw (2.8,.2) node {$b$};
\draw (4.2,.2) node {$c$};
\begin{knot}[	
	consider self intersections,
 	clip width = 8,
 	ignore endpoint intersections = true,
	end tolerance = 2pt
	]
	\flipcrossings{1,2};
	\strand[blue, thick]  (3,0) to [out = 270, in = 90] (2,-1);
	\strand[blue, thick]  (4,0) to [out = 270, in = 90] (3,-1);
	\strand[blue, thick] (2,0) to [out = 270, in = 90] (4,-1);
	\strand[blue, thick] (1,0) to [out = 270, in = 90] (1,-1);

\end{knot}

\draw (2.5,-1.5) node{$d > \max\{b,c\}$};

\end{scope}

\begin{scope}[yshift = -3cm]

\draw[thick] (.5,0) -- (4.5,0);
\draw[red,thick] (1,0) arc (180:0:.5cm);
\draw[red,thick] (3,0) arc (180:0:.5cm);
\draw (.8,.2) node {$b$};
\draw (2.2,.2) node {$c$};
\draw (2.8,.2) node {$a$};
\draw (4.2,.2) node {$d$};
\begin{knot}[	
	consider self intersections,
 	clip width = 8,
 	ignore endpoint intersections = true,
	end tolerance = 2pt
	]
	\strand[blue, thick]  (1,0) to [out = 270, in = 90] (2,-1);
	\strand[blue, thick]  (2,0) to [out = 270, in = 90] (3,-1);
	\strand[blue, thick] (3,0) to [out = 270, in = 90] (1,-1);
	\strand[blue, thick] (4,0) to [out = 270, in = 90] (4,-1);

\end{knot}

\draw (2.5,-1.5) node{$a < \min\{b,c\}$};

\end{scope}

\begin{scope}[xshift = 6cm, yshift = -3cm]

\draw[thick] (.5,0) -- (6.5,0);
\draw[red,thick] (1,0) arc (180:0:.5cm);
\draw[red,thick] (3,0) arc (180:0:.5cm);
\draw[red,thick] (5,0) arc (180:0:.5cm);
\draw (.8,.2) node {$a$};
\draw (2.2,.2) node {$M$};
\draw (2.8,.2) node {$b$};
\draw (4.2,.2) node {$c$};
\draw (4.8,.2) node {$M$};
\draw (6.2,.2) node {$d$};
\begin{knot}[	
	consider self intersections,
 	clip width = 8,
 	ignore endpoint intersections = true,
	end tolerance = 2pt
	]
	\flipcrossings{1,2};
	\strand[blue, thick]  (1,0) to [out = 270, in = 90] (1,-1);
	\strand[blue, thick]  (3,0) to [out = 270, in = 90] (3,-1);
	\strand[blue, thick] (4,0) to [out = 270, in = 90] (4,-1);
	\strand[blue, thick] (6,0) to [out = 270, in = 90] (6,-1);
	\strand[blue,thick] (2,0) to [out = 270, in = 270, looseness = .75] (5,0);

\end{knot}

\draw (3.5,-1.5) node{$b<a<d<c$};

\end{scope}

\end{tikzpicture}\]
\caption{Isotopies resulting in unnested exterior arcs.}
\label{fig:extarcs}
\end{figure}
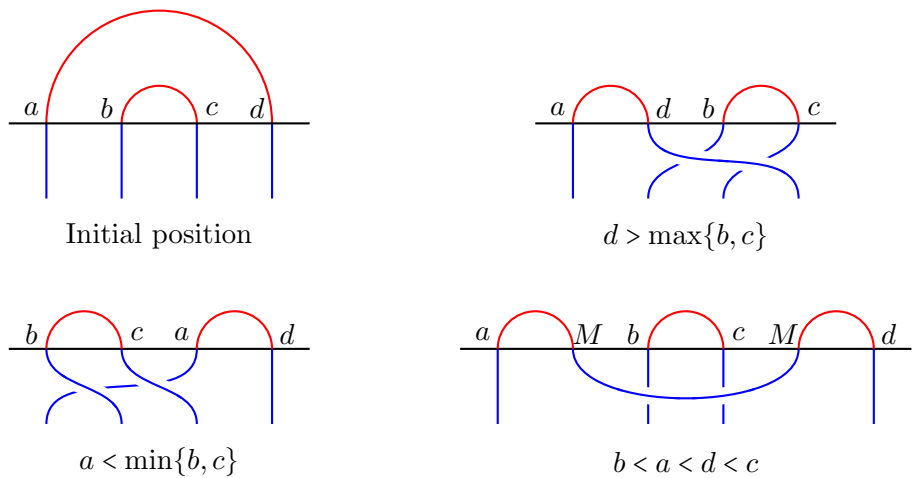

Applying these isotopies repeatedly results in a diagram $D'$ where all exterior arcs are unnested. The left diagram in Figure~\ref{fig:tree3} shows the diagram from Figure~\ref{fig:tree2} after applying the unnesting moves from Figure~\ref{fig:extarcs}. If an exterior arc is labeled by two adjacent heights $h$ and $h+1$, then push it inside of $\eta$, assign the resulting arc a height of $h$, and reduce by one the heights of all arcs with height at least $h+2$. Each such exterior arc is called a \textit{reducible edge} of the tree $T$, and the number of reducible edges of $T$ is denoted by $r(T)$. The right diagram in Figure~\ref{fig:tree3} shows the result of pushing the four reducible edges inside of $\eta$. Finally, shrink the circle $\eta$ to a point and turn each unnested arc labeled by heights $a$ and $b$ into a spoke labeled by $a\; b$ to obtain a spoke diagram, as in Figure~\ref{fig:spoke}.

\begin{figure}[h]
\input{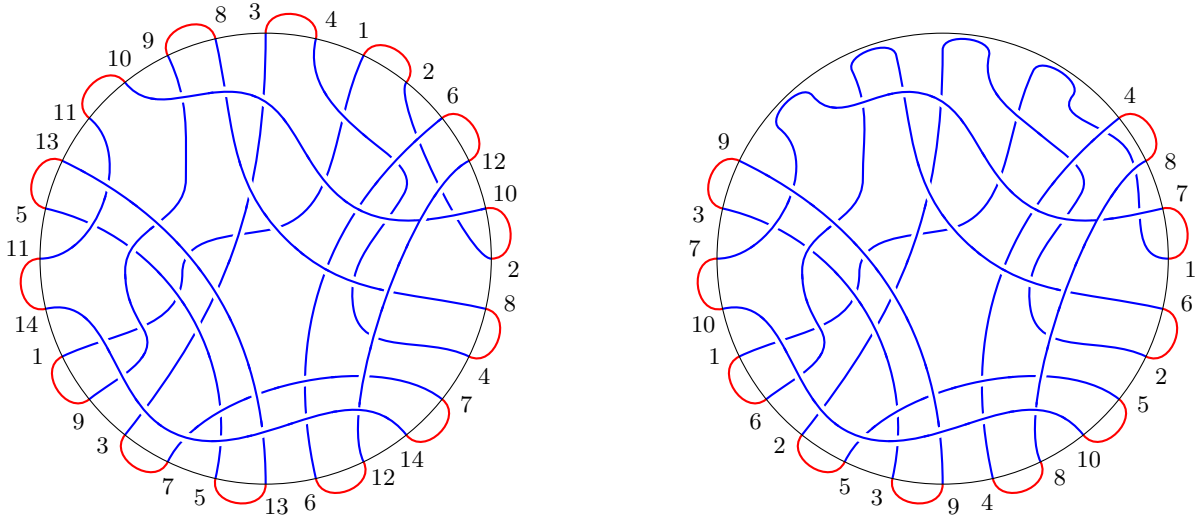}
\caption{The diagram on the left is obtained from the diagram in Figure~\ref{fig:tree2} by applying the unnesting moves in Figure~\ref{fig:extarcs}. The diagram on the right is obtained from the diagram on the left by pushing in reducible edges.}
\label{fig:tree3}
\end{figure}
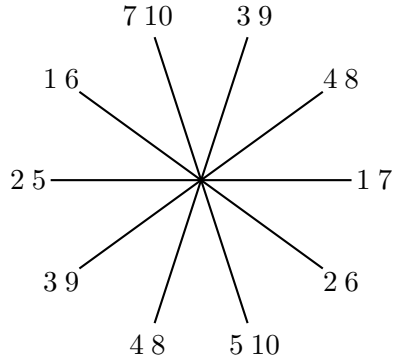
\begin{figure}[h]
\[\begin{tikzpicture}
\foreach \i in {0,...,9}
{
	\draw[thick] (0:0) -- ({\i*36}:2);
}

\draw (0:2.3) node{$1\;7$};
\draw ({1*36}:2.3) node{$4\;8$};
\draw ({2*36}:2.3) node{$3\;9$};
\draw ({3*36}:2.3) node{$7\;10$};
\draw ({4*36}:2.3) node{$1\;6$};
\draw ({5*36}:2.3) node{$2\;5$};
\draw ({6*36}:2.3) node{$3\;9$};
\draw ({7*36}:2.3) node{$4\;8$};
\draw ({8*36}:2.3) node{$5\;10$};
\draw ({9*36}:2.3) node{$2\;6$};

\end{tikzpicture}\]
\caption{The spoke diagram obtained from the diagram on the right in Figure~\ref{fig:tree3}.}
\label{fig:spoke}
\end{figure}

Since our goal is to minimize the number of arcs or spokes, we hope to find a spanning tree with many reducible edges that results in few exterior arcs like those in the bottom-right of Figure~\ref{fig:extarcs}. Jin and Lee \cite{Jin_Lee_2012} showed that if $D$ is a prime diagram, then there is a spanning tree $T$ where the unnesting move that adds an exterior arc only needs to be applied once, resulting in an arc presentation with $c(D)+2$ arcs. To do this, they build the tree $T$ as a sequence of trees $T_0 \subset T_1 \subset \cdots\subset T_k = T$ where the tree $T_i$ is obtained from the tree $T_{i-1}$ by adding a single edge $e_i$. Thus the edge set of each $T_i$ is $E(T_i) = \{e_1,\dots,e_i\}.$ The vertex set $V(T_i)$ is defined to be  the set of vertices incident to the edges in $E(T_i)$. Let $v_i$ be the vertex that is in $V(T_i)$ but not in $V(T_{i-1})$. With this convention, the vertex set is $V(T_i)=\{v_0,v_1,\dots,v_i\}$. Not every spanning tree $T$ obtained in this way yields an arc presentation with $c(D)+2$ arcs. To ensure this property, Jin and Lee impose two conditions on the trees in the sequence, which we describe now.

The closure $\overline{T_i}$ of $T_i$ is the subgraph of $D$ induced by the vertices of $T_i$. An edge $e$ of $D$ is in $\overline{T_i}$ if both vertices incident to $e$ are in $T_i$.  An edge $f \in E(\overline{T_i}) \setminus E(T_i)$ is \textit{good} if it meets the edge $e_i$ transversely at the vertex $v_i$. An edge $f\in E(\overline{T_i}) \setminus E(T_i)$ is \textit{bad} if it extends the edge $e_i$ at the vertex $v_i$. Figure~\ref{fig:filter} shows an example of a bad edge.

Let $\Gamma$ be an arc in $S^2\setminus D$ whose endpoints are non-adjacent vertices $v_j$ and $v_\ell$. We say $\Gamma$ is a \textit{cutting arc} of $T_i$ if the simple closed curve of  $\Gamma\cup T_i$ has edges of $D\setminus \overline{T_i}$ in both its interior and exterior. Figure~\ref{fig:filter} shows an example of a cutting arc. The sequence of trees $T_0\subset T_1 \subset \cdots \subset T_{c(D)-1}$ is a \textit{good filtered spanning tree} if for each $j< c(D)-1$ the tree $T_j$ has no cutting arc and $\overline{T_j}$ has no bad edge. Since $\overline{T_{c(D)-1}} = D$, it follows that $T_{c(D)-1}$ cannot have a cutting arc and it also follows that $\overline{T_{c(D)-1}}$ has a bad edge. Figure~\ref{fig:tree4} shows how to achieve the tree depicted in Figure~\ref{fig:tree1} as a good filtered spanning tree.

\begin{figure}[h]
\[\begin{tikzpicture}[scale=.7, thick]
 
 \draw (.3,.3) to [out = 225, in = 225, looseness=1]
 (4.5,-.5);
 \draw[ultra thick] (4.5,-.5) to [out = 45, in = 315, looseness=1]
 (4.7,1.3);
 \draw (5.3,.5) node{$1$};
 \draw[ultra thick] (.7,.7) to [out = 45, in = 135]
 (1.5,.5);
 \draw (1,1.1) node{$4$};
 \draw[ultra thick] (1.5,.5)  to [out = 315, in = 225]
 (2.3,.3);
 \draw (2,-.1) node{$3$};
 \draw (1.7,.7) to [out = 45, in = 135]
 (2.5,.5) to [out = 315, in = 135]
 (4.3,-.3);
 \draw[ultra thick] (2.7,.7) to [out = 45, in = 200]
 (4.5,1.5);
 \draw (3.4,1.5) node{$2$};
 \draw (4.5,1.5) to [out = 20, in = 225]
 (6.3,2.3);
 \draw (6.7,2.7) to [out = 45, in = 45, looseness=1.5]
 (5.5,3.5) to [out = 225, in=315, looseness=1]
 (3.7,3.3);
 \draw (3.3,3.7) to [out = 135, in = 135, looseness=1.2]
 (.5,.5) to [out = 315, in = 225]
 (1.3,.3);
 \draw (4.7,-.7) to [out = 315, in = 315, looseness=1.2]
 (6.5,2.5) to [out = 135, in = 315]
 (5.7,3.3);
 \draw (5.3,3.7) to [out = 135, in = 45]
 (3.5,3.5) to [out = 225, in = 135]
 (4.3,1.7);
 \draw (2,-1.4) node{$f$};
 
 
\begin{scope}[xshift = 9 cm]
 \draw (.3,.3) to [out = 225, in = 225, looseness=1]
 (4.5,-.5);
 \draw[ultra thick] (4.5,-.5) to [out = 45, in = 315, looseness=1]
 (4.7,1.3);
  \draw (5.3,.5) node{$1$};
 \draw (.7,.7) to [out = 45, in = 135]
 (1.5,.5) to [out = 315, in = 225]
 (2.3,.3);
 \draw (1.7,.7) to [out = 45, in = 135]
 (2.5,.5) to [out = 315, in = 135]
 (4.3,-.3);
 \draw (2.7,.7) to [out = 45, in = 200]
 (4.5,1.5) to [out = 20, in = 225]
 (6.3,2.3);
 \draw (6.7,2.7) to [out = 45, in = 45, looseness=1.5]
 (5.5,3.5) to [out = 225, in=315, looseness=1]
 (3.7,3.3);
 \draw (3.3,3.7) to [out = 135, in = 135, looseness=1.2]
 (.5,.5) to [out = 315, in = 225]
 (1.3,.3);
 \draw (4.7,-.7) to [out = 315, in = 315, looseness=1.2]
 (6.5,2.5) to [out = 135, in = 315]
 (5.7,3.3);
 \draw (5.3,3.7) to [out = 135, in = 45]
 (3.5,3.5);
 \draw[ultra thick] (3.5,3.5) to [out = 225, in = 135]
 (4.3,1.7);
 \draw (3.3,2.5) node{$2$};
 \draw[densely dotted] (3.5,3.5) to [out = 90, in = 90, looseness=1.5]
 (-.5,1) to [out = 270, in = 270, looseness=1.5] 
 (4.5,-.5);
\end{scope}
  
\end{tikzpicture}\]
\caption{In both pictures, a partial filtered spanning tree is indicated by bold edges. On the left, the edge $f$ is bad. On the right, the dotted curve is a cutting arc.}
\label{fig:filter}
\end{figure}
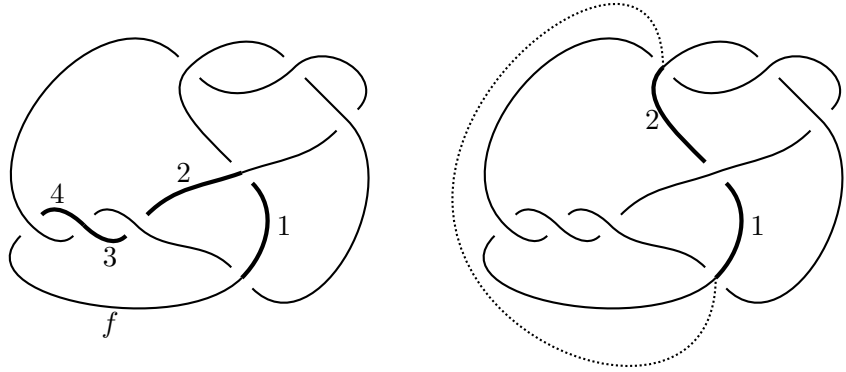

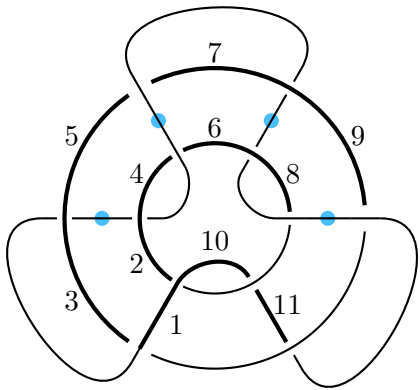
\begin{figure}[h]
\[ \begin{tikzpicture}


\fill[myblue] (180:1.5) circle (.1cm);
\fill[myblue] (120:1.5) circle (.1cm);
\fill[myblue] (60:1.5) circle (.1cm);
\fill[myblue] (0:1.5) circle (.1cm);

\draw [black,ultra thick,domain=125:235] plot ({cos(\x)}, {sin(\x)});
  \draw [black, ultra thick,domain=5:115] plot ({cos(\x)}, {sin(\x)});
 \draw [black,thick,domain= -15:-5] plot ({cos(\x)}, {sin(\x)});
 \draw [black,thick,domain= -45:-15] plot ({cos(\x)}, {sin(\x)});
 \draw [black, thick, domain = -45:-75] plot ({cos(\x)}, {sin(\x)});
  \draw [black, thick, domain = 255:285] plot ({cos(\x)}, {sin(\x)});
   \draw [black, thick, domain = 245:255] plot ({cos(\x)}, {sin(\x)});
  
   \draw [black, ultra thick, domain = 125:235] plot ({2*cos(\x)}, {2*sin(\x)});
    \draw [black, ultra thick,domain=5:115] plot ({2*cos(\x)}, {2*sin(\x)});
     \draw [black,thick,domain= -15:-5] plot ({2*cos(\x)}, {2*sin(\x)});
 \draw [black,thick,domain= -50:-15] plot ({2*cos(\x)}, {2*sin(\x)});
 \draw [black, thick, domain = -50:-70] plot ({2*cos(\x)}, {2*sin(\x)});
  \draw [black, thick, domain = 255:290] plot ({2*cos(\x)}, {2*sin(\x)});
   \draw [black, thick, domain = 245:255] plot ({2*cos(\x)}, {2*sin(\x)});
   
   \draw[black, thick] (240:2.2) to [out = 60, in = 240]
   (240:1) to [out = 60, in = 120]
   (-60:.9);
   \draw[black, ultra thick] (240:2) to [out = 60, in = 240]
   (240:1) to [out = 60, in = 120]
   (-60:.9);
   \draw[black, thick] (240:2.2) to [out = 240, in = 180, looseness=1.5] (180:2.3);
   \draw[black,thick] (180:2.3) to [out = 0, in =180] (180:2.1);
   \draw[black,thick] (180:1.9) -- (180:1.7);
   \draw[black,thick] (180:1.7) -- (180:1.3);
   \draw[black,thick] (180:1.3) -- (180:1.1);
   \draw [black,thick] (180:.9) -- (180:.7);
   \draw [black,thick] (120:.8) -- (120:1.2);
   \draw [black,thick] (180:.7) to [out = 0, in = -60] (120:.8);
   \draw[black,thick] (120:1.2) -- (120:1.8);
   \draw[black,thick] (120:1.8) -- (120:2.2);
   \draw[black,thick] (120:2.2) to [out = 120, in = 60,looseness=1.5] (60:2.3);
   \draw[black, thick] (60:2.3) -- (60:2.1);
   \draw[black, thick] (60:1.9) -- (60:1.7);
   \draw[black,thick] (60:1.7) -- (60:1.3);
   \draw[black,thick] (60:1.3) -- (60:1.1);
   \draw[black,thick] (60:.9) -- (60:.7);
   \draw[black,thick] (0:.8) -- (0:1.2);
   \draw[black,thick] (60:.7) to [out = 240, in = 180] (0:.8);
   \draw[black,ultra thick] (300:1.1) -- (300:1.9);
   \draw[black,thick] (300:2.1) -- (300:2.3);
   \draw[black,thick] (0:1.8) -- (0:2.2);
   \draw[black,thick] (0:1.2) -- (0:1.8);
   \draw[black,thick] (0:2.2) to [out = 0, in = 300, looseness=1.5] (300:2.3);
   
   \draw (250:1.5) node {1};
   \draw (210:1.2) node {2};
    \draw (210:2.2) node {3};
   \draw (150:1.2) node {4};
     \draw (150:2.2) node {5};
   \draw (90:1.2) node {6};
     \draw (90:2.2) node {7};
   \draw (30:1.2) node {8};
     \draw (30:2.2) node {9};
     \draw (270:.3) node{10};
     \draw (310:1.5) node{11};

\end{tikzpicture}\]
\caption{The edge $e_i$ of the filtered spanning tree is labeled with $i$. Edges marked with a small disk are reducible edges. }
\label{fig:tree4}
\end{figure}

Jin and Lee \cite{Jin_Lee_2012} proved that, given a filtered spanning tree $T$, the nested exterior arcs - such as those in the bottom right of Figure~\ref{fig:extarcs} - correspond to bad edges in $\overline{T_j}$, and they proved that every prime diagram admits a good filtered spanning tree. Therefore, the good filtered spanning tree construction produces an arc diagram with $c(D)+2$ arcs. Jin and Lee's construction is combinatorially equivalent to an earlier construction of Bae and Park \cite{Bae_Park_2000}.

Furthermore, if $f$ is a non-alternating edge of $D$ that transversely intersects $e_{i-1}$ and $e_i$, then $f$ is reducible. Jin, Lee, and other collaborators \cite{Jin_Lee_2022, Jin_Lee_2025, Jin_Lee_2024_1, Jin_Lee_2024_2} used this technique to find arc presentations and compute the arc index of many knots with crossing number at most 14.

The following result summarizes the above construction.
\begin{proposition}
\label{prop:summary}
Let $D$ be a prime diagram of the link $L$ with $c(D)$ crossings. Then $D$ contains a good filtered spanning tree $T$ with $r(T)$ reducible edges, and the arc index of $L$ satisfies
\[\alpha(L) \leq c(D) + 2 - r(T).\]
\end{proposition}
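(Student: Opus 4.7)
The plan is to bundle together the three ingredients already developed in the preceding pages of the excerpt: (i) Jin and Lee's existence theorem for a good filtered spanning tree of any prime diagram, (ii) the tree-to-arc-presentation procedure built from a regular neighborhood of $T$ with heights assigned to interior arcs, and (iii) the reduction of reducible exterior edges. So I would not reprove any of these from scratch; I would cite Jin and Lee \cite{Jin_Lee_2012} for the existence of $T$, and then carry out the bookkeeping that turns $T$ into an arc presentation with the stated number of spokes.

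First I would fix a prime diagram $D$ and invoke \cite{Jin_Lee_2012} to produce a good filtered spanning tree $T = T_{c(D)-1}$ with ordered edges $e_1, \dots, e_{c(D)-1}$ and with $r(T)$ reducible edges in the sense defined above. Because $T$ is a spanning tree of the $4$-regular graph $D$ on $c(D)$ vertices and $2c(D)$ edges, there are exactly $c(D)+1$ edges of $D$ lying outside $T$, and these form the exterior arcs of the boundary $\eta$ of a regular neighborhood of $T$. I would then assign heights to the interior arcs exactly as described in the excerpt, starting from an arbitrary arc and propagating heights along over/under relations; this is well defined because $T$ is connected and each non-tree edge meets $T$ at two points.

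Next I would run through the unnesting procedure of Figure~\ref{fig:extarcs} on the $c(D)+1$ exterior arcs and count how many extra spokes are created. The key fact, proved in \cite{Jin_Lee_2012} and recalled in the excerpt, is that when $T$ is good and filtered, bad nestings of exterior arcs correspond precisely to bad edges of the closures $\overline{T_j}$, and the filtration guarantees that exactly one such bad nesting survives, occurring at the last stage where $\overline{T_{c(D)-1}}=D$ must contain a bad edge. Hence the case $b<a<d<c$ of Figure~\ref{fig:extarcs} is applied exactly once, adding a single extra arc and producing an arc presentation of $L$ with $(c(D)+1)+1 = c(D)+2$ spokes.

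Finally I would account for the reducible edges. By definition each reducible edge is an exterior arc whose two endpoint heights are consecutive, so it can be pushed through $\eta$ to become an interior arc of intermediate height, after which the heights of larger interior arcs are shifted down by one so the height set remains an initial segment of $\mathbb{Z}_{>0}$; this move converts one spoke into an interior arc and does not create any new exterior nesting, so the spoke count drops by exactly one. Because the heights are reindexed after each push, the $r(T)$ reducible edges can be processed independently, and after all of them are pushed we obtain an arc presentation with $c(D)+2-r(T)$ spokes, giving the bound $\alpha(L) \leq c(D)+2-r(T)$. The only nontrivial obstacle is making precise the claim that the reducibility of the $r(T)$ edges is preserved under the successive pushes and reindexings, and I would handle this by observing that a reducible edge remains reducible under a reindexing that only changes heights strictly above both of its endpoint heights, and by ordering the pushes so that at each step the edge being pushed has its endpoint heights unaffected by the previous moves.
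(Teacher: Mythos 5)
Your proposal is correct and follows exactly the route the paper intends: Proposition~\ref{prop:summary} is explicitly framed as a ``summary of the above construction,'' so the proof is precisely the assembly of Jin and Lee's existence result for a good filtered spanning tree, the single forced unnesting move giving $c(D)+2$ spokes, and the pushes of the $r(T)$ reducible edges. The only place where your write-up is slightly more cautious than necessary is the ordering of the pushes: after pushing a reducible edge with heights $\{h,h+1\}$, the reindexing is the order-preserving collapse $k\mapsto k$ for $k\le h$ and $k\mapsto k-1$ for $k\ge h+1$, which sends every other consecutive pair to a consecutive pair (and distinct reducible edges have distinct height pairs in a non-split presentation), so reducibility is automatically preserved under any order and no special ordering is needed.
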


Applying Proposition~\ref{prop:summary} to the link $L$ in Figures~\ref{fig:tree1} through \ref{fig:tree4} yields $\alpha(L) \leq c(D)+2-r(T) = 12 + 2 - 4 = 10$. One can see this directly from the spoke diagram in Figure~\ref{fig:spoke} or with much less work from the filtered tree diagram in Figure~\ref{fig:tree4}  using Proposition~\ref{prop:summary}. Since $L$ is adequate, Theorem~\ref{thm:adequate} shows that $\alpha(L)=10$.

\subsection{Turaev genus}

Following the discovery of the Jones polynomial \cite{Jones_1987}, Kauffman \cite{Kauffman_1987}, Murasugi \cite{Murasugi_1987}, and Thistlethwaite \cite{Thistlethwaite_1987} independently proved several of Tait's conjectures about alternating knots. A key element in each of their proofs is the inequality $\Span V_L(t) \leq c(L)$ where equality holds if and only if $L$ is alternating. Turaev \cite{Turaev_1987} gave an alternate proof of this inequality using a surface that is now known as the Turaev surface.

The Turaev surface $\Sigma_D$ of the diagram $D$ is constructed as follows. Consider $D$ as a subset of the plane. Embed the all-$A$ and all-$B$ Kauffman states of $D$ just above and below the projection plane, respectively. First build a cobordism between the all-$A$ and all-$B$ states where the cobordism consists of a saddle in the neighborhood of a crossing of $D$, as in Figure~\ref{fig:saddle}, and consists of bands outside of neighborhoods of the crossings. The Turaev surface $\Sigma_D$ is obtained by capping off each boundary component of the cobordism with a disk. The intersection of the Turaev surface $\Sigma_D$ and the projection plane is the diagram $D$. The genus of the Turaev surface is 
\[g_T(D) := g(\Sigma_D) = \frac{1}{2}\left( c(D) + 2 - |s_A D| - |s_B D|\right),\]
where $c(D)$ is the number of crossings in $D$ and $|s_A D|$ and $|s_B D|$ are the number of components in the all-$A$ and all-$B$ states of $D$ respectively. As mentioned in the introduction, the \textit{Turaev genus} $g_T(L)$ of the non-split link $L$ is defined as 
\[g_T(L) = \min \{g_T(D)~|~D~\text{is a diagram of}~L\}.\]
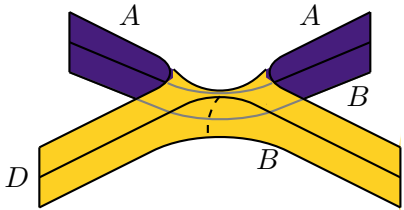
\begin{figure}[h]
        \[\begin{tikzpicture}[scale=.8]
\begin{scope}[thick]
\draw [rounded corners = 10mm] (0,0) -- (3,1.5) -- (6,0);
\draw (0,0) -- (0,1);
\draw (6,0) -- (6,1);
\draw [rounded corners = 5mm] (0,1) -- (2.5, 2.25) -- (0.5, 3.25);
\draw [rounded corners = 5mm] (6,1) -- (3.5, 2.25) -- (5.5,3.25);
\draw [rounded corners = 5mm] (0,.5) -- (3,2) -- (6,.5);
\draw [rounded corners = 7mm] (2.23, 2.3) -- (3,1.6) -- (3.77,2.3);
\draw (0.5,3.25) -- (0.5, 2.25);
\draw (5.5,3.25) -- (5.5, 2.25);
\end{scope}

\begin{pgfonlayer}{background2}
\fill [lsugold]  [rounded corners = 10 mm] (0,0) -- (3,1.5) -- (6,0) -- (6,1) -- (3,2) -- (0,1); 
\fill [lsugold] (6,0) -- (6,1) -- (3.9,2.05) -- (4,1);
\fill [lsugold] (0,0) -- (0,1) -- (2.1,2.05) -- (2,1);
\fill [lsugold] (2.23,2.28) --(3.77,2.28) -- (3.77,1.5) -- (2.23,1.5);

\fill [white, rounded corners = 7mm] (2.23,2.3) -- (3,1.6) -- (3.77,2.3);
\fill [lsugold] (2,2) -- (2.3,2.21) -- (2.2, 1.5) -- (2,1.5);
\fill [lsugold] (4,2) -- (3.7, 2.21) -- (3.8,1.5) -- (4,1.5);
\end{pgfonlayer}

\begin{pgfonlayer}{background4}
\fill [lsupurple] (.5,3.25) -- (.5,2.25) -- (3,1.25) -- (2.4,2.2);
\fill [rounded corners = 5mm, lsupurple] (0.5,3.25) -- (2.5,2.25) -- (2,2);
\fill [lsupurple] (5.5,3.25) -- (5.5,2.25) -- (3,1.25) -- (3.6,2.2);
\fill [rounded corners = 5mm, lsupurple] (5.5, 3.25) -- (3.5,2.25) -- (4,2);
\end{pgfonlayer}

\draw [thick] (0.5,2.25) -- (1.6,1.81);
\draw [thick] (5.5,2.25) -- (4.4,1.81);
\draw [thick] (0.5,2.75) -- (2.1,2.08);
\draw [thick] (5.5,2.75) -- (3.9,2.08);

\begin{pgfonlayer}{background}
\draw [black!50!white, rounded corners = 8mm, thick] (0.5, 2.25) -- (3,1.25) -- (5.5,2.25);
\draw [black!50!white, rounded corners = 7mm, thick] (2.13,2.07) -- (3,1.7)  -- (3.87,2.07);
\end{pgfonlayer}
\draw [thick, dashed, rounded corners = 2mm] (3,1.85) -- (2.8,1.6) -- (2.8,1.24);
\draw (0,0.5) node[left]{$D$};
\draw (1.5,3.2) node{$A$};
\draw (4.5,3.2) node{$A$};
\draw (3.8,.8) node{$B$};
\draw (5.3, 1.85) node{$B$};
\end{tikzpicture}\]
    \caption{The cobordism between the all-$A$ and all-$B$ states of $D$ is a saddle in a neighborhood of each crossing of $D$.}
       \label{fig:saddle}
\end{figure}

Turaev showed that a non-split link is alternating if and only if $g_T(L)=0$, and so the Turaev genus of a link can be seen as a measure of how far the link is from being alternating. Kim \cite{Kim_2018} and Armond and Lowrance \cite{Armond_Lowrance_2017} independently showed that $g_T(D)$ only depends on a decomposition of $D$ into maximally alternating tangles. As the number of non-alternating edges of $D$ increases, the quantity $g_T(D)$ tends to increase as well. As mentioned in Proposition~\ref{prop:summary}, non-alternating edges can  be reducible and thus sometimes result in a smaller arc index. This observation was the initial intuition behind exploring the relationship between crossing number, arc index, and Turaev genus. Bae \cite{Bae_2014} also used alternating tangle decompositions to get bounds on arc index. Bae's work on alternating decompositions applies to a proper subset of adequate links.

Turaev genus has been computed for several infinite families of links. Non-alternating pretzel and Montesinos links have Turaev genus one because their standard diagrams $D$ satisfy $g_T(D)=1$. Abe and Kishimoto \cite{Abe_Kishimoto_2010} proved that the Turaev genus of the torus knot $T_{3,q}$ is $g_T(T_{3,q}) = \left\lfloor |q|/3\right\rfloor$. Jin, Lowrance, Polston, and Zhang \cite{JLPZ_2017} computed the Turaev genus for many other torus knots $T_{p,q}$ with $p\leq 6$. Lowrance \cite{Lowrance_2011} also computed the Turaev genus for many closed 3-braids. Abe \cite{Abe_2009} proved that any adequate diagram minimizes Turaev genus.

The Turaev surface or the Turaev genus of a link has connections to the Jones polynomial \cite{Bae_Morton_2003,DFKLS_2008, DFKLS_2010, Dasbach_Lowrance_2018, Lowrance_Spyropoulos_2017}, the colored Jones polynomial \cite{Kalfagianni_2018, Kalfagianni_Lee_2023}, Khovanov homology \cite{Manturov_2003, Manturov_2006, CKS_2007, DL_2014,DL_2020, BDLMV_2024}, and knot Floer homology \cite{Lowrance_2008, DL_2011, JKK_2022}. For more information about the Turaev surface, refer to these survey articles \cite{CK_2014, KK_2021}.

\section{Arc index of adequate links}
\label{sec:adeq}

In this section, we compute the arc index of an adequate link, confirming the conjecture of Park and Seo \cite{Park_Seo_2000}. Using this arc index computation and Abe's computation of the Turaev genus for adequate links \cite{Abe_2009}, we verify Conjecture~\ref{conj:c+2-a_gT} for adequate links.

Park and Seo conjectured that if $D$ is an adequate diagram of a link $L$, then its arc index is given by $\alpha(L) = c(L)+2\rho(D)$, where $\rho(D)$ takes some work to define; we provide the definition below. Shade the complementary regions of $D$ in a checkerboard fashion so that the shading at each crossing looks like $~
\tikz[baseline=.6ex, scale = .4]{
\fill[white!80!black] (0,0) -- (.5,.5) -- (1,0);
\fill[white!80!black] (0,1) -- (.5,.5) -- (1,1);
\draw[thick] (0,0) -- (1,1);
\draw[thick] (1,0) -- (.7,.3);
\draw[thick] (.3,.7) -- (0,1);
}
~$ or $~
\tikz[baseline=.6ex, scale = .4]{
\fill[white!80!black] (0,0) -- (.5,.5) -- (1,0);
\fill[white!80!black] (0,1) -- (.5,.5) -- (1,1);
\draw[thick] (0,0) -- (.3,.3);
\draw[thick] (.7,.7) -- (1,1);
\draw[thick] (1,0) -- (0,1);
}
~$. The checkerboard graph $G$ of $D$ has vertex set corresponding to the shaded regions and edge set corresponding to the crossings. Edges associated with crossings of the form $~
\tikz[baseline=.6ex, scale = .4]{
\fill[white!80!black] (0,0) -- (.5,.5) -- (1,0);
\fill[white!80!black] (0,1) -- (.5,.5) -- (1,1);
\draw[thick] (0,0) -- (1,1);
\draw[thick] (1,0) -- (.7,.3);
\draw[thick] (.3,.7) -- (0,1);
}
~$ are positive, and edges associated with crossings of the form $~
\tikz[baseline=.6ex, scale = .4]{
\fill[white!80!black] (0,0) -- (.5,.5) -- (1,0);
\fill[white!80!black] (0,1) -- (.5,.5) -- (1,1);
\draw[thick] (0,0) -- (.3,.3);
\draw[thick] (.7,.7) -- (1,1);
\draw[thick] (1,0) -- (0,1);
}
~$ are negative.

Define $G_+$ to be the subgraph of $G$ consisting of all the vertices and the positive edges of $G$, and define $\widetilde{G_+}$ to be the graph obtained from $G_+$ by deleting isolated vertices. Similarly define $G_-$ and $\widetilde{G_-}$. The subgraph $\widetilde{G_+}\cap \widetilde{G_-}$ of $G$ is a graph consisting only of vertices, and we let $\left|\widetilde{G_+}\cap \widetilde{G_-}\right|$ denote the number of vertices in $\widetilde{G_+}\cap \widetilde{G_-}$. For any graph $\Gamma$, define $p_0(\Gamma)$ to be the number of components of $\Gamma$, and define the cyclomatic number $p_1(\Gamma)$ of $\Gamma$ by $p_1(\Gamma) = |E(\Gamma)| - |V(\Gamma)| + p_0(\Gamma)$, where $|E(\Gamma)|$ and $|V(\Gamma)|$ denote the number of edges and vertices in $\Gamma$ respectively. If we consider $\Gamma$ as a $1$-dimensional complex, then $p_0(\Gamma)$ and $p_1(\Gamma)$ are the ranks of the homology groups $H_0(\Gamma)$ and $H_1(\Gamma)$ respectively. Finally, Park and Seo \cite{Park_Seo_2000} define
\begin{equation}
\label{eq:rho}
\rho(D) = p_0(\widetilde{G_+}) + p_0(\widetilde{G_-}) - \left| \widetilde{G_+} \cap \widetilde{G_-}\right|.
\end{equation}

Now that $\rho(D)$ is defined, we are ready to show that if $L$ is a non-split adequate link, then its arc index is given by 
\[\alpha(L) = |s_A D| + |s_B D| = c(L) + 2\rho(D),\]
proving Theorem~\ref{thm:adequate}.

\begin{proof}[Proof of Theorem~\ref{thm:adequate}]
Let $L$ be a non-split adequate link with adequate diagram $D$. We first show that $\alpha(L) = |s_A D| + |s_B D|$, and then show that $|s_A D| + |s_B D| = c(L) + 2\rho(D)$. 

Since $D$ is $A$-adequate, it follows from Theorem~\ref{thm:Aadeq} that its maximum Thurston-Bennequin number is $\overline{tb}(L) = w(D) - |s_A D|$, where $w(D)$ is the writhe of $D$. Since $D$ is $B$-adequate, its mirror $D^*$ is $A$-adequate, and hence $L^*$ is an $A$-adequate link. Thus 
\[\overline{tb}(L^*) = w(D^*) - |s_A D^*| = -w(D) - |s_B D|.\]
We have $\alpha(L)= -\overline{tb}(L) - \overline{tb}(L^*)$ by Theorem~\ref{thm:tb}. Therefore,
\begin{align*}
\alpha(L) = & \; -\overline{tb}(L) - \overline{tb}(L^*)\\
= & \; -w(D) + |s_A D| + w(D) + |s_B D|\\
= & \; |s_A D| + |s_B D|.
\end{align*}

Now we show that $|s_A D| + |s_B D | = c(L) + 2\rho(D)$, completing the proof. Lickorish and Thistlethwaite \cite{Lickorish_Thistlethwaite_1988} proved that since $D$ is adequate, $c(L) = c(D)$. 

Let $G$ be the checkerboard graph of $D$, and let $G_\pm$ and $\widetilde{G_\pm}$ be defined as above. Since $p_0(G_+) + p_0(G_-) = p_0(\widetilde{G_+}) + p_0(\widetilde{G_-}) + (|V(G)| - \left| \widetilde{G_+}\cap \widetilde{G_-}\right|)$, it follows that 
\[\rho(D) =  p_0(\widetilde{G_+}) + p_0(\widetilde{G_-}) - \left| \widetilde{G_+} \cap \widetilde{G_-}\right|=  p_0(G_+) + p_0(G_-) - |V(G)|.\]
Thistlethwaite \cite[Prop. 2]{Thistlethwaite_1988} showed that $|s_A D| = p_0(G_+) + p_1(G_+)$ and $|s_B D| = p_0(G_-) + p_1(G_-)$. Therefore
\begin{align*}
|s_A D| + |s_B D| = & \; p_0(G_+) + p_1(G_+) + p_0(G_-) + p_1(G_-)\\
= & \; p_0(G_+) + \left(|E(G_+)| - |V(G_+)| + p_0(G_+) \right)\\
& + p_0(G_-) + \left( |E(G_-)| - |V(G_-)| + p_0(G_-) \right)\\
= & \; |E(G)| + 2p_0(G_+) + 2p_0(G_-) -2|V(G)| \\
= & \; c(D) + 2\rho(D),
\end{align*}
where the third equality follows from the facts that $|V(G_+)|=|V(G_-)|=|V(G)|$ and $|E(G)| = |E(G_+)| + |E(G_-)|$ and the fourth equality follows from the fact that $c(D) = |E(G)|$.
\end{proof}

The formula for the arc index of an adequate link in Theorem~\ref{thm:adequate} implies Conjecture~\ref{conj:c+2-a_gT} for adequate links.

\begin{proposition}
\label{prop:adeq}
If $L$ is an adequate link, then $c(L) + 2 -\alpha(L) = 2g_T(L)$. In particular, Conjecture~\ref{conj:c+2-a_gT} holds for adequate links.
\end{proposition}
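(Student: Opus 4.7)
The plan is to directly combine Theorem~\ref{thm:adequate} with Abe's result that an adequate diagram realizes the minimum in the definition of Turaev genus. Let $D$ be an adequate diagram of $L$. By Theorem~\ref{thm:adequate}, we already know
\[\alpha(L) = |s_A D| + |s_B D|,\]
and by Lickorish and Thistlethwaite, the fact that $D$ is adequate forces $c(L) = c(D)$.

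Next, I would invoke Abe's theorem \cite{Abe_2009}, which states that any adequate diagram of $L$ minimizes the Turaev genus, so
\[g_T(L) = g_T(D) = \tfrac{1}{2}\bigl(c(D) + 2 - |s_A D| - |s_B D|\bigr).\]
Rearranging and substituting $c(L) = c(D)$ and $\alpha(L) = |s_A D| + |s_B D|$ gives
\[c(L) + 2 - \alpha(L) = c(D) + 2 - |s_A D| - |s_B D| = 2g_T(L),\]
which is the equality asserted in the proposition. Conjecture~\ref{conj:c+2-a_gT} for adequate links is then the immediate consequence that the difference $c(L) + 2 - \alpha(L)$ equals $2g_T(L)$ rather than merely dominating it.

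There is no real obstacle here: once Theorem~\ref{thm:adequate} has been established, the argument is a one-line combination of known facts, and the only non-trivial input is Abe's minimality result for adequate diagrams. The proof is essentially a bookkeeping step that packages the computation of $\alpha(L)$ from Theorem~\ref{thm:adequate} together with the defining formula for $g_T(D)$, under the assumption that both the crossing number and the Turaev genus are simultaneously realized by the same adequate diagram $D$.
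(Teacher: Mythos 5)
Your proof is correct and follows exactly the same route as the paper: apply Theorem~\ref{thm:adequate} to get $\alpha(L)=|s_A D|+|s_B D|$, invoke Abe's minimality result for Turaev genus on adequate diagrams, use $c(L)=c(D)$, and combine. Nothing to add.
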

\begin{proof}
Let $D$ be an adequate diagram of $L$. Theorem~\ref{thm:adequate} implies that $\alpha(L) = |s_A D| + |s_B D|$. Abe \cite{Abe_2009} proved that $D$ is Turaev genus minimizing, that is,
\[2g_T(L) = 2g_T(D) = c(D) + 2 - |s_A D| - |s_B D|.\]
Since $D$ is adequate, $c(L)=c(D)$, and thus
\[c(L) + 2 - \alpha(L) = c(D) + 2 - |s_A D| - |s_B D| =2 g_T(L).  \qedhere \]
\end{proof}

Our proof of Theorem~\ref{thm:adequate} uses maximum Thurston-Bennequin numbers and Theorems~\ref{thm:tb} and \ref{thm:Aadeq}. However, our initial attempt at computing the arc index for an adequate link used filtered spanning trees and Proposition~\ref{prop:summary}. We hoped to show that if $D$ is an adequate diagram of the link $L$, then there is a good filtered spanning tree $T$ of $D$ such that the number of reducible edges is $r(T)=2g_T(D)=2g_T(L)$, which would imply Theorem~\ref{thm:adequate}. However, using a brute force argument that we omit, one can show that the adequate, Turaev genus two diagram $D$ in Figure~\ref{fig:gt2} has no such spanning tree. Therefore one cannot use Proposition~\ref{prop:summary} alone to prove Conjecture~\ref{conj:c+2-a_gT}. Thankfully, our efforts were not in vain because Proposition~\ref{prop:summary} plays a crucial role in verifying Conjecture~\ref{conj:c+2-a_gT} for closed positive 3-braids in Section~\ref{sec:3braids}.

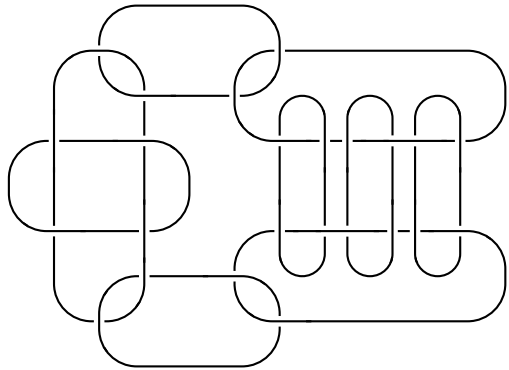
\begin{figure}[h!]
\[\begin{tikzpicture}[scale = 1.2, rounded corners = 5mm]


\begin{knot}[
 	clip width = 5,
 	ignore endpoint intersections = true,
	end tolerance = 1pt
	]
	\flipcrossings{2,4,6,7,10,12,13,20,21,16,24,17};
	\strand[thick] (0,0) rectangle (1,3);
	\strand[thick] (.5,-.5) rectangle (2.5,.5);
	\strand[thick] (.5,2.5) rectangle (2.5,3.5);
	\strand[thick] (-.5,1) rectangle (1.5,2);
	\strand[thick] (2,0) rectangle (5,1);
	\strand[thick] (2,3) rectangle (5,2);
	\strand[thick, rounded corners = 3mm] (2.5,.5) rectangle (3,2.5);
	\strand[thick, rounded corners = 3mm] (3.25,.5) rectangle (3.75,2.5);
	\strand[thick, rounded corners = 3mm] (4,.5) rectangle (4.5,2.5);

\end{knot}

\end{tikzpicture}\]
\caption{An adequate, Turaev genus two diagram that does not contain a spanning tree with four reducible edges.}
\label{fig:gt2}
\end{figure}

\section{Closed positive 3-braids}
\label{sec:3braids}
 The main goal of this section is to prove Conjecture~\ref{conj:c+2-a_gT} for a link that is the closure of a positive 3-braid. 

\subsection{Garside normal forms}\label{sec:Garside_normal_forms} The \textit{braid group on $n$ strands}, $\mathbb{B}_n$, has the following presentation $$ \mathbb{B}_n=\left\langle\sigma_1, \ldots, \sigma_{n-1} \left\lvert\, \begin{array}{cc}
\sigma_i \sigma_j=\sigma_j \sigma_i, & |i-j|>1 \\
\sigma_i \sigma_j \sigma_i=\sigma_j \sigma_i \sigma_j, & |i-j|=1
\end{array}\right.\right\rangle .$$  

Every braid admits infinitely many words (in the generators $\sigma_i$ and their inverses) that represent it. Let $\mathbb{B}_n^+$ be the submonoid comprising all $n$-braids having a positive representative, also known as \textit{positive braids}. In $\mathbb{B}_n$, a partial order can be defined as follows: given $\alpha, \beta \in \mathbb{B}_n$, $\alpha \preceq \beta$ if there exists $\gamma \in \mathbb{B}_n^+$ such that $\beta = \alpha \gamma$.  With this order, it is possible to prove that, given $\alpha, \beta \in \mathbb{B}_n$, there exists an element $d$ such that $d \preceq \alpha,\beta$ and satisfying $d'\preceq d$ for any other $d' \preceq \alpha,\beta$. The element $d$ constitutes a common greatest divisor of $\alpha$ and $\beta$, which we usually denote by $d = \alpha \wedge \beta$. 

Within every braid group there is a special element $\Delta = \sigma_1 (\sigma_2 \sigma_1) \cdots (\sigma_{n-1} \cdots \sigma_2 \sigma_1) \in \mathbb{B}_n$. This element, known as the \textit{Garside element}, satisfies a number of important properties, such as the two following: for each $i =1, \dots, n-1$, $\sigma_i^{-1}=x_i\Delta^{-1}$ for some $x_i \in \mathbb{B}_n^+$, and $\sigma_i \Delta^{\pm 1} = \Delta^{\pm 1} \sigma_{n-i}$. Combining these two properties allows us to decompose every braid word as $\Delta^{p} x$, with $x \in \mathbb{B}_n^+$. This is refined a bit further in the following result.

\begin{theorem}[{\cite{Elrifai_Morton_1994},\cite{Epstein_1992}}]\label{th:normal_form}
    Every braid can be written uniquely in the form $\Delta^{p} a_1 \cdots a_{\ell}$, where $p \in \mathbb{Z}$, $\ell \geq 0$, $1 \prec a_i \prec \Delta$ for every $i=1, \dots,\ell$, and $a_i a_{i+1} \wedge \Delta = a_i$ for every $i=1, \dots, \ell-1$.
\end{theorem}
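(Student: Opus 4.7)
The plan is to prove existence and uniqueness separately, relying on the lattice structure of $(\mathbb{B}_n^+, \preceq)$ and the cancellativity of the positive braid monoid. The key background fact is that the set of simple elements (positive braids $a$ with $1 \preceq a \preceq \Delta$) is finite, that $\Delta$ is their maximum, and that any two elements of $\mathbb{B}_n^+$ admit a greatest common left divisor under $\preceq$, as already stated in the excerpt.

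For existence, I would start from the decomposition $\alpha = \Delta^q x$ with $x \in \mathbb{B}_n^+$, whose availability is established in the paragraph preceding the theorem. First, maximize $q$: while $\Delta \preceq x$, left-cancel one copy of $\Delta$ from $x$ and increment $q$ by one. This terminates because each step strictly decreases the length of any positive representative of $x$, a nonnegative integer. Denote the resulting data by $\alpha = \Delta^p y$ with $y \wedge \Delta \prec \Delta$ (allowing $y = 1$). Next, define the factors $a_i$ greedily by $a_1 = y \wedge \Delta$, and having written $y = a_1 \cdots a_i\, y_i$ with $y_i \in \mathbb{B}_n^+$, set $a_{i+1} = y_i \wedge \Delta$; stop when $y_\ell = 1$. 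Termination again follows from strict decrease of word length. By construction $1 \preceq a_i \preceq \Delta$ with $1 \prec a_i$ until the process has halted, and $a_1 \prec \Delta$ by the choice of $p$. The greedy condition $a_i a_{i+1} \wedge \Delta = a_i$ holds because if some simple $s$ with $s \preceq a_{i+1}$ satisfied $a_i s \preceq \Delta$, then $a_i s$ would be a simple left-divisor of $y_{i-1}$ strictly beyond $a_i$, contradicting the defining property $a_i = y_{i-1} \wedge \Delta$. In particular, this forces $a_i \prec \Delta$ for all $i > 1$ as well.

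For uniqueness, suppose $\Delta^p a_1 \cdots a_\ell = \Delta^{p'} b_1 \cdots b_m$ with both sides satisfying the conditions of the theorem and, without loss of generality, $p \leq p'$. The crucial lemma is that any sequence $c_1, \dots, c_k$ of simple elements satisfying the pairwise greedy condition $c_i c_{i+1} \wedge \Delta = c_i$ also satisfies the global identity $(c_1 \cdots c_k) \wedge \Delta = c_1$. Granted this lemma, left-multiplying by $\Delta^{-p}$ gives $a_1 \cdots a_\ell = \Delta^{p'-p} b_1 \cdots b_m$, and if $p < p'$ then $\Delta$ would be a left divisor of $a_1 \cdots a_\ell$, forcing $a_1 = \Delta$ and contradicting $a_1 \prec \Delta$. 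Hence $p = p'$, and the lemma yields $a_1 = (a_1 \cdots a_\ell) \wedge \Delta = (b_1 \cdots b_m) \wedge \Delta = b_1$; left-cancelling and inducting on $\max(\ell,m)$ finishes the argument.

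The main obstacle is the crucial lemma, which upgrades the pairwise greedy conditions to a global statement about the full product. My approach is an exchange argument in the lattice of simple elements: given any simple left divisor $s$ of $c_1 \cdots c_k$, write $s = c_1 \wedge s \cdot s'$ using the lattice and cancellation structure, and show inductively that the residue $s'$ must be absorbable into $c_1$ via the relation $c_1 c_2 \wedge \Delta = c_1$, so that $s' = 1$ and $s \preceq c_1$. Carrying out this absorption rigorously requires the subword reversing or the equivalent lattice machinery on simple elements, and is the one place where one cannot avoid invoking the full Garside structure; everything else in the proof is bookkeeping.
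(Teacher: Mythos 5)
The paper states Theorem~\ref{th:normal_form} as a cited result of \cite{Elrifai_Morton_1994} and \cite{Epstein_1992} and gives no proof, so there is no in-paper argument to compare against. Your reconstruction is the standard Garside argument, and its scaffolding is sound: maximize the $\Delta$-power, peel off greedy simple prefixes, and reduce uniqueness to the propagation of the local condition $c_i c_{i+1}\wedge\Delta=c_i$ to the global one $(c_1\cdots c_k)\wedge\Delta=c_1$. You correctly single out this propagation as the crux, but your sketch of it --- ``show inductively that the residue $s'$ must be absorbable into $c_1$'' --- leaves the key step unspecified, and as you yourself acknowledge, it is not yet a proof.

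The gap closes cleanly using only the lattice and cancellation facts you already invoked, with no need for subword reversing. Cancellativity gives $(ay)\wedge(az)=a(y\wedge z)$ for positive $a,y,z$, and specializing $z$ to the right complement $\partial a := a^{-1}\Delta$ (itself a simple element) yields $(ax)\wedge\Delta=a\bigl(\partial a\wedge x\bigr)$. Thus the greedy condition $c_ic_{i+1}\wedge\Delta=c_i$ is equivalent to $\partial c_i\wedge c_{i+1}=1$. Now induct on $k$: the inductive hypothesis gives $(c_2\cdots c_k)\wedge\Delta=c_2$, and if $t=\partial c_1\wedge(c_2\cdots c_k)$ then $t$ is simple (a left divisor of $\partial c_1$), so $t\preceq(c_2\cdots c_k)\wedge\Delta=c_2$, whence $t\preceq\partial c_1\wedge c_2=1$. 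Therefore $(c_1\cdots c_k)\wedge\Delta=c_1$, and the rest of your uniqueness argument goes through as stated.
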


\begin{definition}
The \textit{(left) normal form of} $\beta \in \mathbb{B}_n$ is the decomposition $\beta=\Delta^{p} a_1 \cdots a_{\ell}$ given in Theorem~\ref{th:normal_form}. Given such a decomposition, every $a_i$ is called a \textit{simple factor} of $\beta$, and the \textit{infimum} of $\beta$ is defined as $\inf(\beta)=p$. Moreover, denoting the conjugacy class of $\beta$ in $\mathbb{B}_n$ by $\beta^{\mathbb{B}_n}$, the \textit{summit infimum} of $\beta$ is defined as $\inf_s(\beta) = \min \{ \inf (\gamma) \; |\; \gamma \in \beta^{\mathbb{B}_n} \}$.
\end{definition}

As a consequence of the uniqueness of the normal form, a braid is positive if and only if its infimum is greater than or equal to 0. Regarding normal forms in $\mathbb{B}_3$, note that the Garside element is $\Delta = \sigma_1 \sigma_2 \sigma_1 = \sigma_2 \sigma_1 \sigma_2$ and thus the only possible simple factors are $\sigma_1$, $\sigma_2$, $\sigma_1\sigma_2$ and $\sigma_2\sigma_1$. Then, the condition ${a_i a_{i+1} \wedge \Delta = a_i}$ is equivalent to saying that the last letter of $a_i$ coincides with the first letter of $a_{i+1}$.

We now recall two classifications of conjugacy classes of $3$-braids. For convenience, the first classification is presented for 3-braids which are positive, but a more general result is included in \cite{dVV-Gonzalez-Meneses_Silvero_2025}. The second one corresponds to the well-known Murasugi's classification; the version appearing below comes from Baldwin \cite{Baldwin_2008}.

\begin{proposition}\label{prop:positive_infimum_conjugate_5_families}
Every braid in $\mathbb{B}_3^+$ is conjugate to a braid in one of the following families:
\begingroup
\renewcommand{\arraystretch}{1.8}
$$
\begin{array}{l}
\Lambda_1= \left\{\Delta^p \ | \ p \geq 0 \right\}, \\
\Lambda_2= \left\{\Delta^p\sigma_1^{k_1} \ | \ p \geq 0,\ k_1\geq 1\right\}, \\
\Lambda_3= \left\{\Delta^{2u}\sigma_1\sigma_2 \ | \ u \geq 0 \right\}, \\
\Lambda_4= \left\{\Delta^{2u}\sigma_1^{k_1}\sigma_2^{k_2}\cdots \sigma_2^{k_{2t}} \ | \ u \geq 0,\ t\geq 1,\ k_1,\ldots,k_{2t}\geq 2\right\}, \\
\Lambda_5= \left\{\Delta^{2u+1}\sigma_1^{k_1}\sigma_2^{k_2}\cdots \sigma_1^{k_{2t+1}} \ | \ u\geq 0,\ t\geq 1,\ k_1,\ldots,k_{2t+1}\geq 2\right\}. 
\end{array}
$$
\endgroup
\end{proposition}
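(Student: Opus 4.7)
The plan is to conjugate $\beta$ into one of the listed normal forms by iteratively applying cyclic conjugation (using $\alpha\gamma \sim \gamma\alpha$) together with the braid identity $\sigma_1\sigma_2\sigma_1 = \Delta = \sigma_2\sigma_1\sigma_2$. I work with the syllable decomposition
\[ \beta \;\sim\; \Delta^p\,\sigma_{i_1}^{k_1}\sigma_{i_2}^{k_2}\cdots\sigma_{i_r}^{k_r}, \qquad p\geq 0,\ k_j\geq 1,\ i_j\neq i_{j+1}, \]
chosen lexicographically optimal in the pair $(p,r)$: first $p$ is maximized (so $p=\inf_s(\beta)$), and subject to that, $r$ is minimized.

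The workhorse is the following $\Delta$-extraction: whenever the syllable decomposition contains an interior length-one syllable, i.e.\ three consecutive syllables of the shape $\sigma_j^{a}\,\sigma_{j'}\,\sigma_j^{b}$ with $a,b\geq 1$, one has
\[ \sigma_j^{a}\sigma_{j'}\sigma_j^{b} \;=\; \sigma_j^{a-1}\,\Delta\,\sigma_j^{b-1}, \]
after which $\Delta$ is pushed all the way to the left via $\sigma_i\Delta=\Delta\sigma_{\tau(i)}$, where $\tau$ is the Garside involution swapping $1$ and $2$. This raises the power of $\Delta$ by one, contradicting maximality of $p$. A similar hidden extraction handles the boundary case $r=2$ with a single length-one syllable: two cyclic shifts expose $\sigma_1\sigma_2\sigma_1=\Delta$ inside $\sigma_1^k\sigma_2$, giving $\sigma_1^k\sigma_2\sim\Delta\sigma_1^{k-2}$ for $k\geq 2$, and symmetrically for the other cases. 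When $r\geq 3$, any boundary length-one syllable can first be brought into the interior by a cyclic shift. Consequently, after optimization, either $r\leq 1$, or $r\geq 2$ with every $k_j\geq 2$, or the exceptional $r=2$, $k_1=k_2=1$ configuration.

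The cases then fall out by parity. If $r=0$, $\beta\sim\Delta^p\in\Lambda_1$. If $r=1$, conjugation by $\Delta$ (if $i_1=2$) swaps $\sigma_2\to\sigma_1$, giving $\beta\in\Lambda_2$. If $r=2$ with $k_1=k_2=1$, the form is $\Delta^p\sigma_1\sigma_2$: for $p$ even this is exactly $\Lambda_3$, while for $p$ odd a short computation (repeatedly exposing $\sigma_1\sigma_2\sigma_1=\Delta$) yields $\Delta\sigma_1\sigma_2\sim\Delta\sigma_1^2$, contradicting minimality of $r$, so this subcase does not actually occur. For $r\geq 2$ with all $k_j\geq 2$, cyclic conjugation moves $\sigma_{i_1}^{k_1}$ past $\Delta^p$, appending $\sigma_{\tau^p(i_1)}^{k_1}$ at the tail. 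The resulting word remains alternating iff $i_r\neq\tau^p(i_1)$, which happens exactly when $r\equiv p\pmod 2$; otherwise the trailing syllables merge, reducing $r$ and contradicting its minimality. Hence $r\equiv p\pmod 2$, placing $\beta$ in $\Lambda_4$ (both even) or $\Lambda_5$ (both odd).

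The main obstacle I anticipate is the boundary bookkeeping and the parity argument working in tandem. One must verify that every length-one syllable — interior, at a boundary, or embedded in the thorny $r=2$ cases — can be eliminated via either a $\Delta$-extraction or an $r$-reducing cyclic merge, so that the joint maximization of $p$ and minimization of $r$ really forces a member of the listed families rather than some intermediate configuration. Establishing the parity matching $r\equiv p\pmod 2$ in the non-exceptional case is similarly delicate: it rests on tracking how cyclic conjugation interacts with the Garside automorphism $\tau$ applied when one conjugates through an odd power of $\Delta$, and confirming that any mismatch genuinely triggers a merge rather than merely reshuffling the syllables.
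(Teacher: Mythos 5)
Your proof is essentially correct, and the key computations all check out: the interior $\Delta$-extraction $\sigma_j^a\sigma_{j'}\sigma_j^b=\sigma_j^{a-1}\Delta\sigma_j^{b-1}$, the boundary identity $\sigma_1^k\sigma_2\sim\Delta\sigma_1^{k-2}$, the reduction $\Delta^p\sigma_1\sigma_2\sim\Delta^p\sigma_1^2$ for $p$ odd, and the parity criterion for merging under a cyclic shift are each verified by direct calculation. The optimality criterion (maximize $p$ over all conjugates and all positive decompositions $\Delta^p w$, then minimize the number $r$ of alternating syllables) is the right one, and your identification of $p$ with the summit infimum is a standard consequence of Garside theory that the paper already sets up in Section 4.1.

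For context, the paper itself does not prove this proposition: it explicitly defers to the reference \cite{dVV-Gonzalez-Meneses_Silvero_2025} for a more general classification, so there is no in-paper argument to compare against. What you have produced is a self-contained, elementary proof in the same Garside-theoretic spirit. The only place that still needs to be written out in full detail is the handling of boundary length-one syllables. When $k_r=1$ and $r\geq 3$, moving $\sigma_{i_r}$ to the front gives $\Delta^p\sigma_{\tau^p(i_r)}\sigma_{i_1}^{k_1}\cdots\sigma_{i_{r-1}}^{k_{r-1}}$; if the first two syllables merge you are done, and otherwise a second cyclic shift (bringing $\sigma_{i_{r-1}}^{k_{r-1}}$ to the front) puts the length-one syllable in position two, which is interior for $r\geq 3$. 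Similarly, when $r=2$ with $k_1\geq 2$ and $k_2=1$, either $p$ is odd and a single cyclic shift merges the two syllables, or $p$ is even and two cyclic shifts produce $\Delta^p\sigma_1^{k_1-2}\Delta=\Delta^{p+1}\sigma_2^{k_1-2}$, contradicting maximality of $p$. These are exactly the steps you flag in your closing paragraph; once written carefully they close the argument, and no further ideas are needed.
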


\begin{theorem}[{\cite[Prop. 2.1]{Murasugi_1974}}]\label{th:Murasugi_classification}
 Every braid in $\mathbb{B}_3$ is conjugate to exactly one of the following:
\begin{enumerate}
    \item $\Delta^{2v} \sigma_1^{a_1} \sigma_2^{-b_1} \sigma_1^{a_2} \sigma_2^{-b_2} \cdots \sigma_1^{a_r} \sigma_2^{-b_r}$, with $v \in \mathbb{Z}$, and $a_i,b_i, r >0$.
    \item $\Delta^{2v} \sigma_1^{a}$, with $v,a \in \mathbb{Z}$.
    \item $\Delta^{2v} \sigma_1^{a} \sigma_2^{-1}$, with $v \in \mathbb{Z}$ and $a \in \{ -1,-2,-3 \}.$
\end{enumerate}
\end{theorem}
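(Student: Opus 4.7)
The plan exploits three structural facts about $\mathbb{B}_3$: (i) $Z(\mathbb{B}_3)=\langle\Delta^2\rangle$, so any conjugation preserves the exponent $v$ in a factorization $\beta=\Delta^{2v}\gamma$; (ii) the single braid relation $\sigma_1\sigma_2\sigma_1=\sigma_2\sigma_1\sigma_2$ is rigid enough to rearrange any word into an alternating product in the two generators; and (iii) the Garside element acts by $\Delta\sigma_i\Delta^{-1}=\sigma_{3-i}$, swapping $\sigma_1$ and $\sigma_2$, so any argument can be made symmetric in the two generators after conjugating by $\Delta$ if needed.

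For the existence part, I would take a representative word of $\beta$, apply the identities $\sigma_i^{-1}=(\text{positive word})\cdot\Delta^{-1}$ from Section~\ref{sec:Garside_normal_forms} together with the centrality of $\Delta^2$ to extract the maximal central factor and write the braid as $\Delta^{2v}w$, and then cyclically conjugate to put $w$ in the cyclic-word form $\sigma_1^{c_1}\sigma_2^{d_1}\cdots\sigma_1^{c_r}\sigma_2^{d_r}$ in the free group on $\{\sigma_1,\sigma_2\}$. A sign analysis on the exponents $c_i,d_i$ then yields the three families: all $c_i>0$ and $d_i<0$ gives case~(1); a single generator appearing (possibly after a $\Delta$-conjugation swap) gives case~(2); and mixed signs are eliminated by repeated use of the identity $\sigma_1^{-1}\sigma_2\sigma_1=\sigma_2\sigma_1\sigma_2^{-1}$ and its variants, each substitution either strictly reducing the cyclic free-group length of $w$ or freeing up another central factor $\Delta^{\pm2}$. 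Since both quantities are bounded, the process terminates in case~(1), case~(2), or the short leftover family $\sigma_1^{a}\sigma_2^{-1}$ of case~(3).

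For uniqueness, I would appeal to standard conjugacy invariants. The exponent sum $e\colon\mathbb{B}_3\to\mathbb{Z}$ defined by $e(\sigma_i)=1$ is a homomorphism, hence a conjugacy invariant, and pins down $v$ once the case is fixed. To separate the three cases and to distinguish different sequences within case~(1) I would use the summit infimum $\inf_s(\beta)$ of Section~\ref{sec:Garside_normal_forms} together with the reduced Burau trace (equivalently, the Alexander polynomial of the closure $\widehat\beta$). Within case~(1), uniqueness of the cyclic sequence $(a_1,b_1,\ldots,a_r,b_r)$ up to cyclic rotation reduces to uniqueness of Garside normal forms: rewriting each $\sigma_2^{-b_i}$ via $\sigma_2^{-1}=\sigma_1\sigma_2\Delta^{-1}$ turns the braid into a positive word whose normal form (Theorem~\ref{th:normal_form}) is unique.

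The main obstacle will be the exceptional case~(3) and the precise restriction $a\in\{-1,-2,-3\}$. Concretely, I expect the delicate step to be verifying that for $a\geq 0$ or $a\leq -4$ the braid $\sigma_1^{a}\sigma_2^{-1}$ actually reshuffles into case~(1) or case~(2) after extracting a power of $\Delta^{\pm 2}$, while simultaneously proving that the three remaining representatives $\Delta^{2v}\sigma_1^{a}\sigma_2^{-1}$ with $a\in\{-1,-2,-3\}$ are pairwise non-conjugate and non-conjugate to any braid in cases~(1) or~(2). This is where the finer invariants enter decisively: the abelian datum $e(\beta)$ alone cannot tell apart the three exceptional classes (their exponent sums are consecutive integers, absorbed by adjusting $v$ by fractional amounts that are not allowed), so one must use the Alexander polynomial of $\widehat\beta$, or equivalently an explicit trace computation of the reduced Burau representation, to certify non-conjugacy.
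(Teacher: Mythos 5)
The paper does not prove this theorem: it is cited verbatim from Murasugi's 1974 paper (in the phrasing given by Baldwin), so there is no in-paper proof to compare your proposal against. I will instead assess whether your sketch would constitute a proof.

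Your overall strategy is close in spirit to Murasugi's actual argument, but two of its load-bearing steps do not hold up as written. First, the termination argument relies on ``strictly reducing the cyclic free-group length of $w$,'' but $\mathbb{B}_3$ is not a free group and braid words do not carry a well-defined free-group length; once the defining relation $\sigma_1\sigma_2\sigma_1=\sigma_2\sigma_1\sigma_2$ is in play, a word can be lengthened and shortened indefinitely, so this quantity cannot certify termination. The structure you are implicitly reaching for is the isomorphism $\mathbb{B}_3/Z(\mathbb{B}_3)\cong\mathrm{PSL}(2,\mathbb{Z})\cong(\mathbb{Z}/2)*(\mathbb{Z}/3)$; in a free product of finite groups, conjugacy classes do have canonical cyclically reduced forms with a genuine length, and this is what makes a Murasugi-style classification go through. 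Replacing ``free group'' with ``free product $(\mathbb{Z}/2)*(\mathbb{Z}/3)$'' and working with cyclically reduced syllable length would repair this step, but it is a real change, not a cosmetic one. Second, the assertion that ``any conjugation preserves the exponent $v$ in a factorization $\beta=\Delta^{2v}\gamma$'' is not meaningful as stated, because such a factorization is not unique: $\Delta^{2v}\gamma=\Delta^{2(v-1)}(\Delta^2\gamma)$. Centrality of $\Delta^2$ only tells you that if you specify a canonical choice of $v$ (e.g.\ via the image in $\mathrm{PSL}(2,\mathbb{Z})$ plus the abelianization, or via summit infimum together with the normal form of $\gamma$), then that canonical $v$ is a conjugacy invariant; you need to make this precise before using it in the uniqueness argument. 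Finally, your uniqueness discussion of case~(3) underestimates the exponent-sum homomorphism: for $\Delta^{2v}\sigma_1^a\sigma_2^{-1}$ the exponent sum is $6v+a-1$, so the residue mod~$6$ already separates $a=-1,-2,-3$ from one another; the genuine difficulty is separating case~(3) from case~(2), where the exponent sum $6v+a$ can realize any residue, and there some finer invariant (Burau trace, Alexander polynomial, or the free-product normal form itself) really is required, as you anticipate.
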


In the first classification, all representatives are written in normal form (collecting the powers of the generators instead of indicating the simple factors). However, this is not the case for all the representatives in the second one. We may need a conjugate of a positive 3-braid aligning with one of the two classifications, or even pass from one to the other. Our next purpose is to explicitly obtain conjugates of the braids considered in Theorem~\ref{th:Murasugi_classification} belonging to some family $\Lambda_i$. 

\begin{lemma}\label{lemma:braid_classifications}
Let $\beta\in\mathbb{B}_3$ be a braid in Murasugi's classification.
\begin{enumerate}[label=(\roman*), leftmargin=*, labelindent=0pt]
\item Suppose $\beta = \Delta^{2v}\sigma_1^{a_1}\sigma_2^{-b_1}\sigma_1^{a_2}\sigma_2^{-b_2}\cdots \sigma_1^{a_r}\sigma_2^{-b_r}$ where $\mathbf{b}=\sum_{i=1}^r b_i$. The braid $\beta$ is positive if and only if $2v-\mathbf{b}\geq 0$. If $\beta$ is positive, then $\beta$ is conjugate to $\gamma\in\Lambda_4$ when $\mathbf{b}$ is even and $\beta$ is conjugate to $\gamma\in\Lambda_5$ when $\mathbf{b}$ is odd.
\item Suppose $\beta = \Delta^{2v}\sigma_1^a$. The braid is positive if and only if $a,v\geq 0$, or $a<0$ and $2v-|a|\geq 0$. If $v\geq 0$ and $a=0$, then $\beta\in\Lambda_1$. If $v\geq 0$ and $a > 0$, then $\beta$ is conjugate to $\gamma\in\Lambda_2$. If $a<0$ and $2v-|a|\geq 0$, then $\beta$ is conjugate to $\gamma\in\Lambda_4$ when $a$ is even and $\gamma\in\Lambda_5$ when $a$ is odd.
\item Suppose $\beta = \Delta^{2v} \sigma_1^{a} \sigma_2^{-1}$, with $a \in \{-1,-2,-3\}$. The braid $\beta$ is positive if and only if $2v-|a|\geq 0$. Suppose $\beta$ is positive. If $a=-1$, then $\beta$ is conjugate to $\gamma \in \Lambda_2$. If $a=-2$, then $\beta$ is conjugate to $\gamma \in \Lambda_1$. If $a=-3$, then $\beta$ is conjugate to $\gamma \in \Lambda_3$.
\end{enumerate}
\end{lemma}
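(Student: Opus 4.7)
The plan is to prove each part by explicit rewriting in $\mathbb{B}_3$, using three key algebraic facts: the centrality of $\Delta^2$; the conjugation rules $\sigma_i\Delta^{\pm 1}=\Delta^{\pm 1}\sigma_{3-i}$, so pushing $\Delta^{\pm 1}$ past a generator swaps its index; and the fundamental identities $\sigma_1^{-1}=\Delta^{-1}\sigma_1\sigma_2$ and $\sigma_2^{-1}=\Delta^{-1}\sigma_2\sigma_1$, which express each negative generator as $\Delta^{-1}$ times a length-two positive word.

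For part (i), I would substitute each $\sigma_2^{-b_i}$ by $\Delta^{-b_i}p_i$, where $p_i$ is a positive word of length $2b_i$ obtained by iteratively applying $\sigma_2^{-1}=\Delta^{-1}\sigma_2\sigma_1$ and commuting the $\Delta^{-1}$'s past the intervening generators. Collecting all $\Delta^{-1}$ factors on the left yields a factorization $\beta=\Delta^{2v-\mathbf{b}}W$, with $W$ a positive word whose letters are determined by $v$, the $a_i$, the $b_i$ and the running parities of the partial sums of the $b_i$. This immediately gives $\beta\in\mathbb{B}_3^+$ whenever $2v-\mathbf{b}\geq 0$. For the converse, one argues that $\Delta$ does not further divide $W$ by analyzing the alternating block pattern of $W$ forced by the successive swaps and invoking uniqueness of the Garside normal form; thus $\inf(\beta)=2v-\mathbf{b}$ and positivity forces $2v-\mathbf{b}\geq 0$. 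Finally, a cyclic conjugation of $W$ merges consecutive powers of the same generator: when $\mathbf{b}$ is even the net effect of the swaps is trivial and $W$ factors as $\sigma_1^{k_1}\sigma_2^{k_2}\cdots\sigma_2^{k_{2t}}$ with each $k_j\geq 2$, so $\beta$ is conjugate to an element of $\Lambda_4$; when $\mathbf{b}$ is odd, the final swap produces an odd alternating chain $\sigma_1^{k_1}\sigma_2^{k_2}\cdots\sigma_1^{k_{2t+1}}$, placing $\beta$ in $\Lambda_5$.

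For part (ii), I split by the signs of $a$ and $v$. If $a,v\geq 0$ the word is already positive and lies in $\Lambda_1$ (for $a=0$) or $\Lambda_2$ (for $a>0$). If $a<0$, applying $\sigma_1^{-1}=\Delta^{-1}\sigma_1\sigma_2$ iteratively produces $\sigma_1^{-|a|}=\Delta^{-|a|}W'$ for a positive word $W'$; hence $\beta=\Delta^{2v-|a|}W'$, positivity holds exactly when $2v-|a|\geq 0$, and a cyclic conjugation of $W'$ places $\beta$ in $\Lambda_4$ or $\Lambda_5$ according to the parity of $|a|$. Part (iii) is a direct case analysis: for each value $a\in\{-1,-2,-3\}$ one expands $\sigma_1^a\sigma_2^{-1}$ using the two fundamental identities and simplifies. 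The case $a=-1$ collapses to $\Delta^{-1}\sigma_1$ (the conjugate lies in $\Lambda_2$); the case $a=-2$ simplifies to $\Delta^{-2}$ times a braid conjugate to $\Delta$ (the conjugate lies in $\Lambda_1$); and the case $a=-3$ reduces to $\Delta^{-3}$ times a braid conjugate to $\Delta\sigma_1\sigma_2$ (the conjugate lies in $\Lambda_3$). In each sub-case the positivity condition reduces to $2v-|a|\geq 0$.

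The main obstacle is the ``only if'' direction of positivity in part (i): confirming that the positive word $W$ resulting from the rewriting is not further divisible by $\Delta$, so that the lower bound $\inf(\beta)\geq 2v-\mathbf{b}$ is an equality. This requires a careful structural description of $W$ — specifically, the alternating block pattern induced by the successive swaps $x\Delta^{-1}=\Delta^{-1}\tau(x)$ — together with the uniqueness of the Garside left-normal form to conclude $\Delta\not\preceq W$.
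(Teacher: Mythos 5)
Your approach matches the paper's: both rewrite each negative generator as a positive word times a power of $\Delta^{-1}$ (you push $\Delta^{-1}$ left via $\sigma_i^{-1}=\Delta^{-1}\sigma_i\sigma_j$; the paper pushes it right via $\sigma_i^{-1}=\sigma_j\sigma_i\Delta^{-1}$, a purely cosmetic mirror), collect $\Delta$'s to obtain $\beta=\Delta^{2v-\mathbf{b}}W$, and then conjugate cyclically by $\sigma_1$ to land in $\Lambda_4$ or $\Lambda_5$. The obstacle you flag — verifying $\Delta\nmid W$ so that $\inf(\beta)=2v-\mathbf{b}$ — is exactly what the paper resolves by computing $W$ explicitly as an alternating word $\sigma_{i_1}^{k_1}\sigma_{i_2}^{k_2}\cdots\sigma_{i_m}^{k_m}\sigma_1$ with all interior exponents $\geq 2$, which is already a left-normal form in $\mathbb{B}_3$ (no factor $\sigma_i\sigma_j\sigma_i$ can appear, so $\Delta\not\preceq W$); filling this in is routine and you have the right idea.
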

\begin{proof}
Let $b$ be a non-negative integer, and let $[b]$ denote 1 if $b$ is even and 2 if $b$ is odd. Note that $\sigma_1^{-1}=\sigma_2\sigma_1\Delta^{-1}$ and $\sigma_2^{-1} = \sigma_1 \sigma_2 \Delta^{-1}$. Then, for every $b > 0$ it is straightforward to check the following equalities:
\begin{equation}\label{generator^{-q}}
     \renewcommand{\arraystretch}{1.6}
    \begin{array}{rcccl}
         \sigma_1^{-b} & = & (\sigma_2 \sigma_1 \Delta^{-1})^b & = & \Delta^{-b} \sigma_{[b+1]} \sigma_{[b]}^2 \sigma_{[b-1]}^2 \cdots \sigma_{[3]}^2 \sigma_{[2]}^2 \sigma_{[1]},    \\
         
         \sigma_2^{-b} & = & (\sigma_1 \sigma_2 \Delta^{-1})^b & = & \Delta^{-b} \sigma_{[b]} \sigma_{[b-1]}^2 \sigma_{[b-2]}^2 \cdots \sigma_{[2]}^2 \sigma_{[1]}^2 \sigma_{[0]}.
    \end{array}
\end{equation}

\begin{enumerate}[label=(\roman*), leftmargin=*, labelindent=0pt]
    \item Let $\beta = \Delta^{2v} \sigma_1^{a_1} \sigma_2^{-b_1} \sigma_1^{a_2} \sigma_2^{-b_2} \cdots \sigma_1^{a_r} \sigma_2^{-b_r}$. Let us use (\ref{generator^{-q}}) to display some subsequent transformations of the given expression for $\beta$:

\begin{small}
    \[ \renewcommand{\arraystretch}{1.6}
    \begin{array}{l}
         \Delta^{2v} \sigma_1^{a_1} \Delta^{-b_1}             (\sigma_{[b_1]} \sigma_{[b_1-1]}^2 \cdots \sigma_{[1]}^2 \sigma_{[0]}) \sigma_1^{a_2} \sigma_2^{-b_2} \cdots \sigma_1^{a_r} \sigma_2^{-b_r},\\

         \Delta^{2v-b_1} (\sigma_{[b_1]}^{a_1+1} \sigma_{[b_1-1]}^2 \cdots \sigma_{[1]}^2) \sigma_{[0]}^{a_2+1}  \sigma_2^{-b_2} \cdots \sigma_1^{a_r} \sigma_2^{-b_r},\\
         
         \Delta^{2v-b_1} (\sigma_{[b_1]}^{a_1+1} \sigma_{[b_1-1]}^2 \cdots \sigma_{[1]}^2) \sigma_{[0]}^{a_2+1}  \Delta^{-b_2} (\sigma_{[b_2]} \sigma_{[b_2-1]}^2 \cdots \sigma_{[1]}^2 \sigma_{[0]}) \sigma_1^{a_3} \sigma_2^{-b_3} \cdots \sigma_1^{a_r} \sigma_2^{-b_r}, \\

          \Delta^{2v-b_1-b_2}  (\sigma_{[b_1+b_2]}^{a_2+1} \sigma_{[b_1+b_2-1]}^2 \cdots \sigma_{[b_2+1]}^2)  \sigma_{[b_2]}^{a_2+2} (\sigma_{[b_2-1]}^2 \cdots \sigma_{[1]}^2 \sigma_{[0]}) \sigma_1^{a_3} \sigma_2^{-b_3} \cdots \sigma_1^{a_r} \sigma_2^{-b_r}, \\

          \vdots \\

          \Delta^{2v-\mathbf{b}} \left( \sigma_{[\mathbf{b}]}^{a_1+1} \sigma_{[\mathbf{b}-1]}^2 \cdots \sigma_{[\mathbf{b}-b_1+1]}^2 \right) \left( \sigma_{[\mathbf{b}-b_1]}^{a_2+2} \sigma_{[\mathbf{b}-b_1-1]}^2 \cdots \sigma_{[\mathbf{b}-b_1-b_2+1]}^2 \right) \cdots \left( \sigma_{[b_r]}^{a_r+2} \sigma_{[b_r-1]}^2 \cdots \sigma_{[1]}^2  \right) \sigma_{[0]}.
    \end{array}
    \]
 \end{small}

The last expression is actually the normal form of $\beta$, so the braid would be positive if and only if $2v-\mathbf{b} \geq 0$. If this is the case, conjugating the last expression by $\sigma_{[0]}=\sigma_1$, we obtain  \[ \gamma = \Delta^{2v-\mathbf{b}} \left( \sigma_{[\mathbf{b}]}^{a_1+2} \sigma_{[\mathbf{b}-1]}^2 \cdots \sigma_{[\mathbf{b}-b_1+1]}^2 \right) \left( \sigma_{[\mathbf{b}-b_1]}^{a_2+2} \sigma_{[\mathbf{b}-b_1-1]}^2 \cdots \sigma_{[\mathbf{b}-b_1-b_2+1]}^2 \right) \cdots \left( \sigma_{[b_r]}^{a_r+2} \sigma_{[b_r-1]}^2 \cdots \sigma_{[1]}^2  \right), \] which is a conjugate of $\beta$ belonging to $\Lambda_4$ if $\mathbf{b}$ is even or to $\Lambda_5$ if $\mathbf{b}$ is odd.

    \item Let $\beta = \Delta^{2v} \sigma_1^a$. If $a \geq 0$, then $\beta$ is written in normal form, and it is positive if and only if $v \geq 0$. If this is the case, $\beta \in \Lambda_1$ when $a=0$ and  $\beta \in \Lambda_2$ when $a>0$.
    
    Otherwise, $a=-|a|<0$ and, by applying the same method as in the previous case, the normal form of $\beta$ is
    \[
    \Delta^{2v-|a|} \sigma_{[|a|]} \sigma_{[|a|-1]}^2 \cdots \sigma_{[1]}^2 \sigma_{[0]} .
    \]
    In this case, $\beta$ is a positive braid if and only if $2v-|a| \geq 0$. If this is the case, conjugating the last expression by $\sigma_{[0]}=\sigma_1$, we obtain \[ \gamma = \Delta^{2v-|a|} \sigma_{[|a|]}^2 \sigma_{[|a|-1]}^2 \cdots \sigma_{[1]}^2 , \] which is a conjugate of $\beta$ belonging to $\Lambda_4$ if $a$ is even or to $\Lambda_5$ if $a$ is odd.

   \item Let $\beta = \Delta^{2v} \sigma_1^{a} \sigma_2^{-1}$, with $a \in \{-1,-2,-3\}$. Doing the same as in the previous cases, the normal form of $\beta$ would be:
    \begin{enumerate}
        \item[(iii.a)] If $a=-1$, $\Delta^{2v-1} \sigma_1$. Then, $\beta$ is a positive braid if and only if $2v-1 \geq 0$. If this is the case, $\beta$ belongs to $\Lambda_2$.
        \item[(iii.b)] If $a=-2$, $ \Delta^{2v-2} \sigma_2 \sigma_1^2$. Then $\beta$ is a positive braid if and only if $2v - 2 \geq 0$. If this is the case, conjugating the normal form by $\sigma_1$, we obtain $ \gamma = \Delta^{2v-1}$, which is a conjugate of $\beta$ belonging to $\Lambda_1$.
        \item[(iii.c)] If $a=-3$, $\Delta^{2v-3} \sigma_1 \sigma_2^2 \sigma_1^2$. Then $\beta$ is a positive braid if and only if $2v-3 \geq 0$. If this is the case, conjugating the normal form by $\Delta\sigma_1$, we obtain $ \gamma = \Delta^{2v-2} \sigma_1 \sigma_2$, which is a conjugate of $\beta$ belonging to $\Lambda_3$. \qedhere

    \end{enumerate}

\end{enumerate}

\end{proof}

\subsection{Crossing number and Turaev genus of closed 3-braids} Let $\widehat{\beta}$ be the closure of the braid $\beta$. An $n$-braid word $\sigma_{i_1}^{\varepsilon_1} \sigma_{i_2}^{\varepsilon_2} \cdots \sigma_{i_m}^{\varepsilon_m}$, with $\varepsilon_x = \pm 1$ for every $1 \leq x \leq m$, is said to be \textit{homogeneous} if $\varepsilon_{x} = \varepsilon_{x'}$ whenever $i_{x} = i_{x'}$. In this sense, a braid is \textit{homogeneous} if it admits a homogeneous representative.

\begin{theorem}[{\cite[Prop. 7.4]{Murasugi_1991}}]\label{th:crossing_number_braid_closures}
    Let $\beta$ be a homogeneous $n$-braid and $\sigma_{i_1}^{\varepsilon_1} \sigma_{i_2}^{\varepsilon_2} \cdots \sigma_{i_r}^{\varepsilon_r}$ a homogeneous representative of $\beta$. If the braid index of $\widehat{\beta}$ is $n$, then the crossing number of $\widehat{\beta}$ is given by
    \[c(\widehat{\beta}) = \sum_{x=1}^r |\varepsilon_x|.\]
\end{theorem}

\begin{theorem}[{\cite[Cor. 5.2]{Gonzalez-Meneses_Manchon_2014}}]\label{th:braid_index_closed_3-braids} 
    Given a positive 3-braid $\beta$, the braid index of $\widehat{\beta}$ is smaller than three if and only if $\beta$ is conjugate to $\sigma_1^k\sigma_2$ for some $k \geq 0$.
\end{theorem}

The two previous results allow us to compute the crossing number of $\widehat{\beta}$ from the normal form of $\beta \in\mathbb{B}_3^+$ just by adding all the exponents involved, except for those braids conjugate to $\sigma_1^k\sigma_2$. The closures of these last ones correspond to the torus links $T(2,k)$, whose crossing number is $k$. Knowing the crossing number of closed positive $3$-braids plays an important role in the proof of Conjecture~\ref{conj:c+2-a_gT} for this family, but we also need to know, or at least estimate, the Turaev genus, for which we will use the following result.

\begin{theorem}[{\cite[Prop. 4.15 - 4.17]{Lowrance_2011}}]\label{th:turaev_genus_closed_3-braids}
If $\beta$ is a braid listed in Murasugi's classification, then
\[ |v|-1 \leq g_T(\widehat{\beta}) \leq |v|. \]
Additionally, given a braid $\beta$ in the third family of Murasugi's Classification, $g_T(\widehat{\beta}) = v-1$ if $v \geq 1$.
\end{theorem}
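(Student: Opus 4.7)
The plan is to bound $g_T(\widehat{\beta})$ from above and below by separate arguments. Since $g_T(L) = g_T(L^*)$ and taking the mirror sends a Murasugi representative to another Murasugi representative with $v$ replaced by $-v$ (after conjugation), I may assume throughout that $v \geq 0$.

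For the upper bound $g_T(\widehat{\beta}) \leq v$, I would take the standard closed-braid diagram $D$ of the Murasugi representative of $\beta$ and compute $g_T(D) = \tfrac{1}{2}(c(D) + 2 - |s_A D| - |s_B D|)$ directly. Writing $\Delta^{2v} = (\sigma_1\sigma_2)^{3v}$, the diagram $D$ consists of a $v$-fold iteration of the full-twist tangle followed by a locally alternating tail $\sigma_1^{a_1}\sigma_2^{-b_1}\cdots$ (or its analogue in Families 2 and 3). A careful enumeration of Kauffman state circles piece by piece should show that each full twist $\Delta^2$ contributes exactly $2$ to $c(D) + 2 - |s_A D| - |s_B D|$, while the alternating tail contributes $0$ because its sign pattern alternates at different generator positions. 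Summing over the $v$ full twists yields $g_T(\widehat{\beta}) \leq g_T(D) \leq v$.

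For the lower bound $g_T(\widehat{\beta}) \geq v - 1$, I would apply the Champanerkar--Kofman--Stoltzfus bound relating Turaev genus to the Khovanov $\delta$-width, where $\delta = j - 2i$. This inequality asserts that the Khovanov homology of $L$ is supported on at most $g_T(L) + 2$ consecutive $\delta$-diagonals, so
\[ g_T(L) \geq \max\{j - 2i \mid H^{i,j}(L) \neq 0\} - \min\{j - 2i \mid H^{i,j}(L) \neq 0\} - 2. \]
I would compute, or at least bound, the Khovanov homology of $\widehat{\beta}$ inductively by a long exact sequence that peels off one $\Delta^2$ factor at a time. Each full twist is known to contribute at least $2$ to the $\delta$-width of the Khovanov support, so iterating over $v$ twists gives width at least $2v$, whence $g_T(\widehat{\beta}) \geq v - 1$.

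For the equality $g_T(\widehat{\beta}) = v - 1$ in the third family with $v \geq 1$, I would invoke Lemma~\ref{lemma:braid_classifications}(iii) to replace $\beta = \Delta^{2v}\sigma_1^a\sigma_2^{-1}$ with an explicit positive 3-braid conjugate in $\Lambda_1$, $\Lambda_2$, or $\Lambda_3$ according to $a \in \{-1,-2,-3\}$. The standard closed-positive-braid diagram $D'$ of such a conjugate is $B$-adequate, and an explicit state count on $D'$ should yield $g_T(D') = v - 1$ in each subcase, matching the lower bound already established. The main obstacle is the Khovanov width computation for the lower bound, since rigorously tracing how the $\delta$-support evolves across the $v$ full twists via the long exact sequence requires careful bookkeeping of bigrading shifts, particularly at the interface where the full twists meet the alternating tail.
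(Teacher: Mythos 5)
The paper imports this statement from \cite{Lowrance_2011} (Propositions 4.15--4.17) without reproducing a proof, and your strategy is in fact the same as that reference's: the upper bound via explicit state counts on the standard closed-braid diagrams, and the lower bound via the Champanerkar--Kofman--Stoltzfus inequality relating Turaev genus to the Khovanov $\delta$-width, with the width of closed $3$-braids computed by long exact sequences that peel off $\Delta^2$ factors. So the overall route is correct. However, two things need repair.

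First, your stated inequality is mis-normalized. With $\delta = j - 2i$, adjacent $\delta$-diagonals differ by $2$, so the number of diagonals supporting $H^{*,*}(L)$ is $\tfrac{1}{2}\bigl(\max\delta - \min\delta\bigr) + 1$. The bound ``width $\leq g_T(L) + 2$'' therefore gives
\[
g_T(L) \;\geq\; \tfrac{1}{2}\bigl(\max\{j-2i\} - \min\{j-2i\}\bigr) - 1,
\]
not $\max\delta - \min\delta - 2$ as you wrote. Note that your final deduction ($g_T \geq v-1$ from a $\delta$-spread of $2v$) implicitly uses the corrected version, so this is a typographical rather than conceptual error, but it should be fixed.

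Second, and more substantially, the step ``each full twist is known to contribute at least $2$ to the $\delta$-width'' is the entire content of the cited propositions, not an auxiliary fact you can invoke. The long exact sequences relating $\widehat{\Delta^{2v}w}$ and $\widehat{\Delta^{2(v-1)}w}$ do not automatically preserve or grow the width --- one has to rule out cancellations at each stage, and the argument genuinely splits into separate cases depending on which Murasugi family the tail $w$ lies in, and for the third family further by $a\in\{-1,-2,-3\}$. You acknowledge this gap yourself. Finally, your claim that the third-family conjugates have $B$-adequate standard diagrams is not correct in general (for instance $\Delta^{2v-2}\sigma_1\sigma_2$ with $v=1$ is a $2$-crossing diagram of the unknot, which is not $B$-adequate), though this is not fatal since the desired upper bound $g_T(D')=v-1$ can be obtained by a direct Kauffman-state count without invoking adequacy.
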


\subsection{Arc index of closed positive 3-braids}

In this section we estimate the arc index of any closed positive 3-braid.
We give lower bounds on the arc index using Thurston-Bennequin numbers, which we in turn estimate by finding certain nontrivial classes in Khovanov homology.  We give upper bounds of the arc index by using good filtered spanning trees and Proposition~\ref{prop:summary} on diagrams derived from representatives in Proposition~\ref{prop:positive_infimum_conjugate_5_families}. 

The Khovanov homology $H(L)=\bigoplus_{i,j\in\mathbb{Z}} H^{i,j}(L)$ of a link $L$,
where $H^{i,j}(L)$ denotes the summand in homological grading $i$ and polynomial grading $j$, constitutes a categorification of the Jones polynomial $V_L$. In fact, 
\[\sum_{i,j\in\mathbb{Z}} (-1)^i \; \operatorname{rank} H^{i,j}(L) \cdot t^j = (t+t^{-1}) V_L(t^2).\]

While we will not give a detailed construction of Khovanov homology, we do remind the reader of some Khovanov homology notation. For an in-depth construction of $H(L)$, see \cite{Khovanov_2000, BN_2002,Viro2_2004}. For an oriented diagram $D$ of the link $L$, define $H(D)$ to be $H(L)$ and let $C(D)$ be the standard cochain complex whose homology is $H(L)$. Define
\[\begin{array}{rclcrcl}
              i_{\min}(D) & = & \min \{ i \mid C^{i,j}(D) \neq 0 \}, & \quad & j_{\min}(D) & = & \min \{ j \mid C^{i,j}(D) \neq 0 \},  \\
         \underline{i}(D) & = & \min \{ i \mid H^{i,j}(D) \neq 0 \}, & \quad & \underline{j}(D) & = & \min \{ j \mid H^{i,j}(D) \neq 0 \},  \\
              i_{\max}(D) & = & \max \{ i \mid C^{i,j}(D) \neq 0 \}, & \quad &  j_{\max}(D) & = & \max \{ j \mid C^{i,j}(D) \neq 0 \},  \\
          \overline{i}(D) & = & \max \{ i \mid H^{i,j}(D) \neq 0 \}, & \quad & \overline{j}(D) & = & \max \{ j \mid H^{i,j}(D) \neq 0 \}.
\end{array} \]
The maximum and minimum polynomial gradings in $C(D)$ are given in the following proposition.

\begin{proposition}[{\cite[Cor. 4.2]{Gonzalez-Meneses_Manchon_Silvero_2018}}]\label{prop:j_min_j_max}
    For any oriented link diagram $D$, 
    \[j_{\min }(D)=c(D)-3c_-(D)-\left|s_A D\right| \quad \text{ and } \quad j_{\max }(D) = -c(D) + 3c_+(D) + \left|s_B D\right|,\]
    where $c_{\pm}(D)$ denotes the number of positive and negative crossings in $D$.
\end{proposition}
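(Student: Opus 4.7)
The plan is to compute $j_{\min}(D)$ and $j_{\max}(D)$ directly from the cube of resolutions defining the Khovanov chain complex $C(D)$. Label the $c(D)$ crossings of $D$ and consider the cube $\{0,1\}^{c(D)}$: a vertex $v = (v_1, \dots, v_{c(D)})$ corresponds to the Kauffman state $s_v D$ where crossing $i$ is given an $A$-resolution when $v_i = 0$ and a $B$-resolution when $v_i = 1$. Write $|v| = \sum_i v_i$ and let $|s_v D|$ denote the number of circles of $s_v D$. The chain module at $v$ is $V^{\otimes |s_v D|}$ with $V = \mathbb{Z}\langle v_+, v_-\rangle$ and $\deg v_\pm = \pm 1$; after the usual global shift by $[-c_-(D)]\{c_+(D)-2c_-(D)\}$, a pure tensor generator $x = \varepsilon_1 \otimes \cdots \otimes \varepsilon_{|s_v D|}$ at vertex $v$ has polynomial grading
\[ j(v,x) = \sum_{k=1}^{|s_v D|} \deg(\varepsilon_k) + |v| + c_+(D) - 2c_-(D). \]

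At each fixed vertex $v$, this quantity is maximized over pure tensors by $v_+^{\otimes |s_v D|}$, giving $j_{\max}(v) = |s_v D| + |v| + c_+(D) - 2c_-(D)$, and minimized by $v_-^{\otimes |s_v D|}$, giving $j_{\min}(v) = -|s_v D| + |v| + c_+(D) - 2c_-(D)$. I next claim that $|s_v D|+|v|$ attains its maximum over the cube at the all-$B$ vertex $(1,\dots,1)$ and that $|v|-|s_v D|$ attains its minimum at the all-$A$ vertex $(0,\dots,0)$. Indeed, along a cube edge $v \to v'$ flipping a single coordinate from $0$ to $1$, the state $s_{v'} D$ differs from $s_v D$ by a single saddle, which either merges two circles into one or splits one into two, so $|s_{v'} D|-|s_v D|\in\{-1,+1\}$ while $|v'|-|v|=1$. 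Hence both $|s_v D|+|v|$ and $|v|-|s_v D|$ change by $0$ or $2$ along every cube edge and are therefore non-decreasing along any monotone path from $(0,\dots,0)$ to $(1,\dots,1)$, yielding the asserted extrema.

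Substituting and using $c(D)=c_+(D)+c_-(D)$ gives
\[ j_{\max}(D) = |s_B D| + c(D) + c_+(D) - 2c_-(D) = -c(D) + 3c_+(D) + |s_B D|, \]
\[ j_{\min}(D) = -|s_A D| + c_+(D) - 2c_-(D) = c(D) - 3c_-(D) - |s_A D|, \]
which are the claimed formulas. There is no serious obstacle in this argument; the only delicate point is matching the polynomial-grading conventions of the Khovanov complex to those producing the stated identities, after which the result reduces to the elementary cube-monotonicity observation above.
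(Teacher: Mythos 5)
Your proof is correct. A couple of remarks. First, the paper does not give its own proof of this proposition---it is imported as Corollary~4.2 of Gonz\'alez-Meneses, Manch\'on, and Silvero---so there is no in-paper argument to compare against; your derivation is the natural one and is essentially what that reference does. Second, your computation is sound provided one fixes, as you do, the Khovanov convention in which the $0$-resolution of \emph{every} crossing (regardless of sign) is the $A$-resolution, so the all-$0$ vertex of the cube is $s_A D$ and the all-$1$ vertex is $s_B D$; with that convention the grading shift $j(v,x)=\deg(x)+|v|+c_+(D)-2c_-(D)$ is the standard one and your algebra checks out. The key step---that along any edge of the cube raising one coordinate from $0$ to $1$, the number of state circles changes by exactly $\pm 1$ while $|v|$ increases by $1$, so both $|s_v D|+|v|$ and $|v|-|s_v D|$ are non-decreasing along monotone paths in the cube---is correct and cleanly isolates why the extremes of the quantum grading are realized at the all-$A$ and all-$B$ vertices. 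Be aware that some sources instead set the $0$-resolution equal to the orientation-preserving smoothing, in which case the all-$0$ vertex is not $s_A D$ and a different grading shift appears; your proof works because you pin down the convention and the shift consistently.
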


In \cite{Ng_2005} Ng showed that
\begin{equation}\label{eq:Ng_2005}
\overline{tb}(L) \leq \min\{j-i~|~H^{i,j}(L)\neq 0\},
\end{equation}
and as the arc index admits a decomposition in terms of Thurston-Bennequin numbers (see Theorem~\ref{thm:tb}), it will be useful for us to identify nontrivial classes in Khovanov homology. We use two methods that potentially yield nontrivial Khovanov classes. The first method always gives a nontrivial homology class when the diagram is $A$- or $B$-adequate.

\begin{proposition}[{\cite[Prop. 6.1]{Khovanov_2003}, \cite[Th. 2.4]{DL_2020}}]
\label{prop:adeqKh}
If $D$ is an $A$-adequate diagram of the link $L$, then $j_{\min}(D) = \underline{j}(D)$ and
 \[\bigoplus_{i\in\mathbb{Z}} H^{i, \underline{j}(D)} = H^{-c_-(D),\underline{j}(D)}(L) \cong \mathbb{Z}.\]
If $D$ is a $B$-adequate diagram of the link $L$, then $j_{\max}(D)=\overline{j}(D)$ and \[\bigoplus_{i\in\mathbb{Z}} H^{i,\overline{j}(D)}=H^{c_+(D)-c_-(D),\overline{j}(D)}(L)\cong \mathbb{Z}.\]
\end{proposition}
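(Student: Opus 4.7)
The plan is to identify, at each of the extremal polynomial gradings $j_{\min}(D)$ and $j_{\max}(D)$, the unique enhanced Khovanov state that generates the chain complex there, and to show that under the corresponding adequacy hypothesis this state represents a nontrivial homology class. The two statements are mirror images of one another, so I would present the $A$-adequate case in full and then indicate the analogous modifications needed for the $B$-adequate case.

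First I would recall that a generator of $C(D)$ is an enhanced state consisting of a resolution at every crossing together with a labeling of each resulting circle by $v_+$ or $v_-$ of degree $\pm 1$. From the Khovanov bigrading conventions and Proposition~\ref{prop:j_min_j_max}, the minimum polynomial grading $j_{\min}(D) = c(D) - 3c_-(D) - |s_A D|$ is attained by the unique enhanced state $\mathbf{x}_A$ given by the all-$A$ resolution with every circle labeled $v_-$, which sits in homological grading $-c_-(D)$. More precisely, if one tracks the maximum $j$-grading over a resolution $r$, it equals $i(r) + w(D) + |s_r D|$ up to a uniform shift; each $A$-to-$B$ swap starting from the all-$A$ resolution weakly increases this quantity, and the first swap strictly increases it by the $A$-adequacy hypothesis. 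This forces $C^{i, j_{\min}(D)}(D) = 0$ for every $i \neq -c_-(D)$, so the proposition reduces to showing that $\mathbf{x}_A$ represents a nonzero class in homology.

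Next I would verify that $\mathbf{x}_A$ is a nontrivial cycle. Each summand of the Khovanov differential switches one $A$-resolution to a $B$-resolution; by $A$-adequacy, the two arcs at that crossing lie on distinct components of $s_A D$, so the swap merges two different circles of $\mathbf{x}_A$. The induced action on labels is the Frobenius multiplication $m$, and since $m(v_- \otimes v_-) = v_-^2 = 0$, every summand of $d\mathbf{x}_A$ vanishes. It is also not a boundary, because $-c_-(D)$ is the minimum homological grading of $C(D)$ and the chain group one step below it is zero. Hence $[\mathbf{x}_A]$ generates $H^{-c_-(D), \underline{j}(D)}(L) \cong \mathbb{Z}$.

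The $B$-adequate case is strictly parallel. The all-$B$ resolution with every circle labeled $v_+$ is the unique enhanced state at $j_{\max}(D)$ and lies in the top homological grading $i_{\max}(D)$; it is automatically a cycle since the chain group one step above vanishes, and $B$-adequacy combined with the same maximum-$j$-grading analysis shows that $C^{i_{\max}(D)-1,\, j_{\max}(D)}(D) = 0$, so $\mathbf{x}_B$ cannot be a boundary either. The main subtlety throughout is keeping the bigrading conventions consistent, but once those are fixed the entire argument reduces to the algebraic identity $v_-^2 = 0$ together with its dual statement that $v_+ \otimes v_+ \notin \operatorname{Im}(\Delta)$.
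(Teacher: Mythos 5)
Your overall plan — identify the unique extremal enhanced state, show it is a cycle via $v_-^2 = 0$, and show the chain group at the extremal polynomial grading is concentrated in a single homological degree by tracking how the extremal $j$-grading evolves across $A$-to-$B$ swaps — is the standard proof of this result, and it is the right one. However, the concentration step for the $A$-adequate case has a real sign error. You track the \emph{maximum} $j$-grading over a resolution $r$, which equals $i(r) + w(D) + |s_r D|$, and assert that the first $A$-to-$B$ swap \emph{strictly} increases it by $A$-adequacy. This is false: $A$-adequacy forces the first swap to \emph{merge} two circles of $s_A D$, so $|s_r D|$ drops by one and the maximum $j$-grading stays constant ($+1$ from $i(r)$, $-1$ from $|s_r D|$). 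Moreover, even if the maximum $j$-grading did strictly increase, that would not rule out enhanced states at $j_{\min}(D)$ sitting in other resolutions. The quantity you need to track is the \emph{minimum} $j$-grading, $i(r) + w(D) - |s_r D|$: each swap weakly increases it (a split keeps it constant, a merge raises it by $2$), and $A$-adequacy makes the first swap a merge, hence a strict increase. This gives $\min j(r) > j_{\min}(D)$ for every $r \neq s_A D$ and therefore $C^{i, j_{\min}(D)}(D) = 0$ for $i \neq -c_-(D)$, which is what you want. With ``maximum'' replaced by ``minimum'' (and the sign on $|s_r D|$ flipped accordingly), the argument is correct; the cycle step via $m(v_- \otimes v_-) = 0$, the non-boundary step via extremality of the homological degree, and the mirror argument for the $B$-adequate case (where tracking the \emph{maximum} $j$-grading \emph{is} the correct choice, since there the first $B$-to-$A$ swap is a merge and the maximum $j$-grading strictly \emph{decreases}) all go through. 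As a side note, your proof places $\mathbf{x}_B$ at $i = i_{\max}(D) = c_+(D)$; the paper's statement writes $c_+(D) - c_-(D)$, but the paper itself later invokes this result with $H^{c_+(D), \overline{j}(D)}$ in the proof of Proposition~\ref{prop:upper_extreme_Kh_3-braids}, so the $-c_-(D)$ there appears to be a typo, and your value is the right one.
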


The second method results in a computation of Khovanov homology in its extreme polynomial grading $j_{\max}(D)$ for any link diagram $D$, but does not guarantee nontriviality. Gonz\'alez-Meneses, Manch\'on, and Silvero \cite{Gonzalez-Meneses_Manchon_Silvero_2018} developed this construction of extreme Khovanov homology based on similar work for the extreme coefficients of the Jones polynomial by Manch\'on \cite{Manchon_2004}.

For any graph $G$, a subset $\Omega \subseteq V(G)$ of the vertices of $G$ is \textit{independent} if $\Omega$ does not contain a pair of adjacent vertices. The empty set $\emptyset$ is considered to be independent. The  \textit{independence simplicial complex} $X_G$ of $G$ is the simplicial complex whose simplices are the independent subsets of $G$. The dimension of a simplex of $X_G$ is considered to be the number of vertices minus one.

There is a (standard) cochain complex associated to $X_G$
\begin{equation}\label{cochain_complex_X_G} 
\cdots \longrightarrow C^i(X_G) \xrightarrow{\; \delta_i \;} C^{i+1}(X_G) \longrightarrow \cdots, 
\end{equation}
where $C^i(X_G)$ is defined to be the free $\mathbb{Z}$-module with basis the set containing all the simplices of dimension $i$, and the (standard) differential $\delta_i$ is given by $\delta_i(\Omega) = \sum_{v} (-1)^{k(\Omega,v)} \cup \{ v \}$, where the sum ranges over vertices of $G$ that are not adjacent to any vertex in $\Omega$, and $k(\Omega,v)$ is the number of vertices of $\Omega$ coming after $v$ in a fixed ordering of the vertex set of $G$ \cite{Gonzalez-Meneses_Manchon_Silvero_2018}.  Let $\{H^i(X_G)\}_i$ denote the cohomology of the cochain complex \eqref{cochain_complex_X_G}. 

Decorate the all-$B$ state by replacing each crossing 
$\tikz[baseline=.6ex, scale = .4]{\draw (0,1) -- (.3,.7); \draw (.7,.3) -- (1,0);\draw (0,0) -- (1,1);}$ 
with its $B$-resolution and an arc, called a \textit{$B$-chord}, connecting the two arcs of the $B$-resolution 
$\tikz[baseline=.6ex, scale = .4]{\draw[red, thick] (.5,.1) -- (.5,.9) ; \draw[rounded corners = 1.5mm] (0,0) -- (.5,.45) -- (1,0); \draw[rounded corners = 1.5mm] (0,1) -- (.5,.55) -- (1,1);}$. 
The $B$-chords in $s_BD$ connecting a component to itself are called \textit{admissible}. The \textit{$B$-Lando graph} of $D$, denoted by $G_D^{B}$, is constructed from $s_BD$ as follows: one vertex for each admissible $B$-chord, and one edge between two vertices if the corresponding $B$-chords are incident to the same component of $s_B D$ and have endpoints that alternate as one travels around that component.

Gonz\'alez-Meneses, Manch\'on, and Silvero showed that this cohomology of the independence complex of the $B$-Lando graph of $D$ is isomorphic to the extreme Khovanov homology of $D$.
\begin{theorem}[{\cite[Th. 4.4]{Gonzalez-Meneses_Manchon_Silvero_2018}}]\label{geometrization_extreme_Kh}
   Let L be an oriented link represented by a diagram D having $c_+$ positive crossings. Let $G_D^B$ be the $B$-Lando graph of $D$. Then
\[H^{i,j_{\max}(D)}(D)\cong H^{c_+ -1-i}(X_{G_D^B}).\]
\end{theorem}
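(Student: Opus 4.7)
The plan is to directly compare the extreme-grading subcomplex $C^{*, j_{\max}(D)}(D)$ of the Khovanov cochain complex with the simplicial cochain complex of the independence complex $X_{G_D^B}$, and then pass to cohomology. The unifying idea is that the combinatorial and planar structure of the chord diagram on $s_B D$ simultaneously controls the Khovanov chain groups at the extreme quantum grading and the simplicial structure of $X_{G_D^B}$.

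First, I would identify which enhanced Kauffman states $(s, \varepsilon)$ contribute to $C^{i, j_{\max}(D)}(D)$. Using the formula $j_{\max}(D) = -c(D) + 3c_+(D) + |s_B D|$ from Proposition~\ref{prop:j_min_j_max} together with the standard quantum-grading formula for enhanced states, a direct computation shows that $(s, \varepsilon)$ lies in this chain group if and only if every circle of $s$ is labeled $v_+$ and $s$ is obtained from $s_B D$ by $A$-resolving a set $S$ of exactly $k = c_+(D) - i$ crossings in such a way that $|s| = |s_B D| + k$. The core combinatorial claim is that this maximal circle count is achieved precisely when $S$ is an independent set in $G_D^B$. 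This step is the main obstacle: it follows from a careful analysis of how $A$-resolutions act on the chord diagram on $s_B D$. Namely, $A$-resolving an admissible chord splits one circle into two, while resolving a non-admissible chord merges two distinct circles into one, and two linked admissible chords on a common circle contribute only one net extra circle rather than two when both are $A$-resolved simultaneously. Ruling out all configurations other than pairwise non-linked admissible chords requires careful planar chord-diagram bookkeeping, which I would do by induction on $|S|$.

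With this characterization, $C^{i, j_{\max}(D)}(D)$ is the free $\mathbb{Z}$-module on independent sets of size $c_+(D) - i$ in $G_D^B$, which are exactly the $(c_+(D) - 1 - i)$-dimensional simplices of $X_{G_D^B}$. To finish, I would verify that, under this identification, the Khovanov differential matches the simplicial coboundary in \eqref{cochain_complex_X_G}. At the extreme grading, the Khovanov differential contains only merge saddles between all-$v_+$ states, since a split saddle would produce a state with more circles than the extremal bound allows at the next homological grading; these merges act trivially on the labels via $v_+ \otimes v_+ \mapsto v_+$, so the matrix coefficients reduce to the hypercube signs $(-1)^{\xi}$ coming from a chosen crossing ordering. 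A careful sign comparison shows that these coincide with the alternating signs $(-1)^{k(\Omega, v)}$ in the simplicial coboundary after the reindexing $d \leftrightarrow c_+(D) - 1 - i$. Passing to cohomology then yields the desired isomorphism $H^{i, j_{\max}(D)}(D) \cong H^{c_+(D) - 1 - i}(X_{G_D^B})$.
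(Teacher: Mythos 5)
This theorem is cited by the paper (from Gonz\'alez-Meneses--Manch\'on--Silvero) without an in-paper proof, so there is no argument of the authors' to compare against; I assess your sketch on its own. Your plan is the natural one and, as far as the chain groups go, it is correct: the formula $j_{\max}(D) = -c(D) + 3c_+(D) + |s_BD|$ together with the quantum grading of an enhanced state forces every circle to be labeled $v_+$ and $|s| = |s_BD| + |S|$, where $S$ is the set of crossings $A$-resolved relative to $s_BD$; and since each successive $A$-resolution changes the circle count by exactly $\pm 1$, the maximum $|s_BD|+|S|$ is attained precisely when $S$ consists of pairwise non-interlaced admissible $B$-chords, i.e.\ an independent set in $G_D^B$. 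Your proposed induction on $|S|$ is a reasonable way to nail that combinatorial claim down.

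The gap is in the differential identification, which you treat as a routine sign check. In the conventions consistent with Proposition~\ref{prop:j_min_j_max} (one computes that the all-$B$ state with all $v_+$ labels realizes $j_{\max}$ at homological grading $i = c_+$), the Khovanov differential $C^{i,j_{\max}}(D) \to C^{i+1,j_{\max}}(D)$ changes one $A$-marker to a $B$-marker, i.e.\ it \emph{removes} an element from $S$; at the extreme grading these are exactly the merge saddles you describe. Under the reindexing $d = c_+ - 1 - i$ this sends a $d$-simplex of $X_{G_D^B}$ to a signed sum of its $(d-1)$-faces, which is the simplicial \emph{boundary}, not the coboundary $\delta_d$ of \eqref{cochain_complex_X_G}, which \emph{adds} a vertex. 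The two operators run in opposite directions, so your argument, once the signs are sorted, produces $H^{i,j_{\max}}(D) \cong H_{c_+-1-i}(X_{G_D^B})$ (simplicial homology), not the stated cohomology; over $\mathbb{Z}$ this is a genuine distinction (already visible for spaces such as $\mathbb{RP}^2$). To close the gap you must either verify that the source's grading conventions reverse the direction of the cube differential relative to the one used here, or prove the statement first for $j_{\min}$ and the $A$-Lando graph (where the Khovanov differential \emph{adds} a $B$-marker and genuinely matches $\delta_d$) and then pass to $j_{\max}$ via the mirror $L^*$, being explicit about the dualization that mirroring introduces on Khovanov homology over $\mathbb{Z}$.
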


Our computations are aided by the following result that determines the independence cohomology of a path.
\begin{proposition}[{\cite{Kozlov_1999}}, {\cite[Cor. 3.4]{Przytycki_Silvero_2018}}]\label{indepence_complex_path}
 Let $P_n$ denote a path with $n+1$ vertices and $n$ edges. Then $X_{P_n}$ is homotopy equivalent to a point $*$ if $n=3k$, and to $S^k$ if $n=3k+1, 3k+2$. Hence, 
\[ H^i(X_{P_n}) \cong
\left\{ \begin{array}{ll}
    H^i(*)  & \text{if } \; n=3k,\\
   H^i(S^k) & \text{if } \; n=3k+1, \ 3k+2,
\end{array} \right. \]
where $H(*)$ and  $H(S^k)$ correspond to the classical (reduced) cohomology of $*$ and $S^k$, respectively.
\end{proposition}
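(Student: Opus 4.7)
The plan is to prove the homotopy equivalence by strong induction on $n$ driven by the reduction $X_{P_n} \simeq \Sigma X_{P_{n-3}}$ for $n \geq 3$; the cohomology statement then follows by taking reduced cohomology of both sides. The three residue classes modulo $3$ give three base cases $n = 0, 1, 2$, each handled by direct computation: $X_{P_0}$ is a single vertex and hence contractible; $X_{P_1}$ consists of two disjoint vertices and so is $S^0$; and $X_{P_2}$ is the $1$-simplex $\{v_0, v_2\}$ together with the isolated vertex $v_1$, which deformation retracts to $S^0$.

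The crux is the recurrence $X_{P_n} \simeq \Sigma X_{P_{n-3}}$. Label the vertex set of $P_n$ as $v_0, v_1, \ldots, v_n$ and invoke the standard fold lemma for independence complexes: if $N(v) \subseteq N(w)$ for distinct vertices $v, w$ of a graph $G$, then $X_G$ deformation retracts onto $X_{G \setminus w}$. In $P_n$ we have $N(v_0) = \{v_1\} \subseteq \{v_1, v_3\} = N(v_2)$, so $X_{P_n} \simeq X_{P_n \setminus v_2}$. The graph $P_n \setminus v_2$ is the disjoint union of the edge $v_0 v_1$ (a copy of $P_1$) and the subpath $v_3 v_4 \cdots v_n$ (a copy of $P_{n-3}$). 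Combining the general identity $X_{G_1 \sqcup G_2} \cong X_{G_1} \ast X_{G_2}$ (simplicial join) with $X_{P_1} \simeq S^0$ and the basic fact $S^0 \ast Y \simeq \Sigma Y$ yields $X_{P_n} \simeq \Sigma X_{P_{n-3}}$.

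The induction then closes without effort. If $n = 3k$ with $k \geq 1$, the inductive hypothesis gives $X_{P_{n-3}} \simeq *$, hence $X_{P_n} \simeq \Sigma * \simeq *$; if $n = 3k+1$ or $n = 3k+2$ with $k \geq 1$, the hypothesis gives $X_{P_{n-3}} \simeq S^{k-1}$ and suspending yields $S^k$. The cohomology statement is immediate. The main obstacle is really just the fold lemma; this is a classical tool in topological combinatorics (due to Engstr{\"o}m, with variants in Kozlov's work) and can alternatively be proved via a Mayer--Vietoris argument on the decomposition $X_G = X_{G \setminus w} \cup \mathrm{star}_{X_G}(w)$, where the star is a cone on $X_{G \setminus N[w]}$ and the hypothesis $N(v) \subseteq N(w)$ is used to build an explicit retraction, but nothing in this step is special to paths.
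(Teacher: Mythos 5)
The paper does not prove this proposition; it is quoted as a known result from Kozlov and from Przytycki--Silvero, so there is no in-paper argument to compare against. Your proof is a correct, self-contained derivation and, as far as I can tell, it is essentially the standard one in the cited literature: the fold lemma applied with $v = v_0$ dominated by $w = v_2$ gives $X_{P_n} \simeq X_{P_n \setminus v_2} = X_{P_1 \sqcup P_{n-3}} \cong X_{P_1} * X_{P_{n-3}} \simeq S^0 * X_{P_{n-3}} = \Sigma X_{P_{n-3}}$, and the three base cases $n=0,1,2$ close the induction on residues mod $3$. The fold lemma itself (if $N(v)\subseteq N(w)$ then $X_G \searrow X_{G\setminus w}$) is correctly stated and correctly applied, and can be verified quickly via the acyclic matching $\sigma \leftrightarrow \sigma\cup\{v\}$ on the simplices containing $w$, which is perhaps more elementary than the Mayer--Vietoris sketch you give at the end; but either route works. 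One minor remark for consistency with the paper's conventions: the cochain complex in the paper uses the dimension convention where the empty simplex has dimension $-1$, so $H^i(X_{P_n})$ is reduced cohomology, and your computation (e.g.\ $\widetilde{H}^*(X_{P_0})=0$ in all degrees) matches that convention.
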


We use Theorem~\ref{geometrization_extreme_Kh} to compute some Khovanov homology classes in the closures of positive $3$-braids.

\begin{proposition}\label{prop:upper_extreme_Kh_3-braids} The (upper) extreme Khovanov homology of the closure of a braid in a family $\Lambda_i$ follows the structure depicted in Table~\ref{tab:extreme_Kh_positive_3-braids}.
\begin{table}[h!]
    \centering
    \begin{footnotesize}

 \begin{tabular}{|c|c|c|c|} \hline
 & & & \\ [-1em]
    $\beta$ & $\overline{j}(\widehat{\beta})$ & $H^{*,j}(\widehat{\beta})$ for $j=\overline{j}(\widehat{\beta})$  & $i_0$ \\  \hline
     & & & \\ [-1em]
    
         $\Delta^{p}$  &    $\begin{array}{rl}
                p=0: & 3    \\
                p>0 \text{ odd}: & 6p    \\
                p>0 \text{ even}: & 6p+1 
                \end{array}$ & $\begin{array}{rl}
             p=0,1: &  H^{i_0,j}(\widehat{\beta}) \cong \mathbb{Z}   \\
             p>1  \text{ odd}:  &  H^{i_0-1,j}(\widehat{\beta}) \oplus H^{i_0,j}(\widehat{\beta}) \cong \mathbb{Z} \oplus \mathbb{Z} \\
             p>1  \text{ even}: & H^{i_0,j}(\widehat{\beta}) \cong \mathbb{Z}^2  
        \end{array}$ & $2p$ \\ \hline
         & & & \\ [-1em]
        
         $\Delta^{p}\sigma_1^{k_1}$  & $\begin{array}{rl}         
               p \text{ odd}: & 6p + 3k_1 \\ 
              p \text{ even}: & 6p + 3k_1+1   \\
        \end{array}$ & $H^{i_0,j}(\widehat{\beta}) \cong \mathbb{Z}$ & $2p + k_1$ \\ \hline 
        
      $\Delta^{2u}\sigma_1\sigma_2$   & $\begin{array}{rl}
              u=0: & 1   \\
              u>0: & 12u+5  
        \end{array}$ & $H^{i_0,j}(\widehat{\beta}) \cong \mathbb{Z}$ & $\begin{array}{rl}
             u=0: & 0   \\
             u>0: & 4u+1  
        \end{array}$ \\ \hline
         & & & \\ [-1em]
        
         $\Delta^{2u}\sigma_1^{k_1}\sigma_2^{k_2}\cdots \sigma_2^{k_{2t}}$ & $12u + 3\sum_{i=1}^{2t} k_i -2t +1$ & $H^{i_0,j}(\widehat{\beta}) \cong \mathbb{Z}$ & $4u + \sum_{i=1}^{2t} k_i$  \\ \hline
          & & & \\ [-1em]
         
         $\Delta^{2u+1}\sigma_1^{k_1}\sigma_2^{k_2}\cdots \sigma_1^{k_{2t+1}}$ & $12u + 3\sum_{i=1}^{2t+1} k_i - 2t +6$ & $H^{i_0,j}(\widehat{\beta}) \cong \mathbb{Z}$ & $4u + 2 + \sum_{i=1}^{2t+1} k_i$ \\ \hline
    \end{tabular}

    \end{footnotesize}
    \caption{Extreme Khovanov homology of closed positive 3-braids.}
    \label{tab:extreme_Kh_positive_3-braids}
\end{table}
\end{proposition}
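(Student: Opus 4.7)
The central tool is Theorem~\ref{geometrization_extreme_Kh}: for a positive $3$-braid $\beta$ with standard closure diagram $D$, we have $c_-(D)=0$ and $c_+(D)=c(D)$ equal to the exponent sum, so Proposition~\ref{prop:j_min_j_max} gives $j_{\max}(D)=2c(D)+|s_BD|$, and Theorem~\ref{geometrization_extreme_Kh} identifies $H^{i,j_{\max}(D)}(\widehat{\beta})\cong H^{c(D)-1-i}(X_{G_D^B})$. My plan is to carry out a family-by-family computation of $|s_BD|$ and of the $B$-Lando graph $G_D^B$ for the five normal forms in Proposition~\ref{prop:positive_infimum_conjugate_5_families}.

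For each family I would first trace the circles of the all-$B$ state of the braid closure, then list the admissible $B$-chords and the edges they generate in $G_D^B$. The key structural observation I expect to establish is that, for every $\beta\in\Lambda_i$, the graph $G_D^B$ is a disjoint union of paths $P_{n_1}\sqcup\cdots\sqcup P_{n_r}$ whose lengths are determined by the exponents $k_j$ and the power of $\Delta$: each run $\sigma_j^{k_j}$ contributes a chain of chords along a single $s_BD$-component, and each factor of $\Delta$ attaches these chains in a controlled combinatorial pattern. Because the independence complex of a disjoint union is the topological join of the component complexes, Proposition~\ref{indepence_complex_path} then determines the reduced cohomology of $X_{G_D^B}$ in closed form, and Theorem~\ref{geometrization_extreme_Kh} translates this into the extreme Khovanov data recorded in Table~\ref{tab:extreme_Kh_positive_3-braids}.

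For $\Lambda_2$, $\Lambda_4$, $\Lambda_5$ and $\Lambda_3$ with $u\geq 1$, I expect the standard diagram to be $B$-adequate, so that Proposition~\ref{prop:adeqKh} immediately guarantees $\overline{j}(\widehat{\beta})=j_{\max}(D)$ with a single $\mathbb{Z}$-summand at $i_0=c_+(D)$, matching the third and fourth columns of the table. For the $\Lambda_1$ family $\{\Delta^p\}$, the $B$-Lando graph splits into two components whose join contributes the rank-$2$ top cohomology seen in the table; the parity of $p$ controls whether the two classes land in the same or in adjacent homological gradings, and a separate argument is needed to check that $\overline{j}(\widehat{\beta})$ coincides with $j_{\max}(D)$, or to descend to the appropriate grading when it does not.

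The main obstacle will be the combinatorial description of $G_D^B$. For $\Lambda_4$ and $\Lambda_5$, the interplay between the exponents $k_i$ and the alternating index pattern in the normal form must be tracked precisely to determine the lengths $n_j$ of the path components; for $\Lambda_1$, the contribution of the central element $\Delta^2$ is subtle and produces the two-path decomposition whose parity-dependent shift ultimately controls the rank-$2$ phenomenon and the parity-dependent formula for $\overline{j}$ and $i_0$ recorded in the table.
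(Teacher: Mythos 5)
Your plan has the right general shape for $\Lambda_2$, $\Lambda_4$, and $\Lambda_5$ (pass through the $B$-Lando graph and invoke Proposition~\ref{indepence_complex_path}), but it contains two concrete errors that would derail the argument.

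First, the claim that the standard diagrams for $\Lambda_2$, $\Lambda_3$ (with $u\geq 1$), $\Lambda_4$, and $\Lambda_5$ are $B$-adequate is false whenever the $\Delta$-power is positive, and the homological gradings in the table contradict it. If $D$ were $B$-adequate, Proposition~\ref{prop:adeqKh} would put the extreme class in homological grading $c_+(D)-c_-(D)=c(D)$, since all crossings are positive; but for $\Lambda_2$ the table records $i_0=2p+k_1$ while $c(D)=3p+k_1$, and for $\Lambda_4$ it records $i_0=4u+\sum k_i$ while $c(D')=6u+\sum k_i$. The discrepancy is precisely the $\Delta$-power. In fact the $B$-Lando graph for these diagrams is a nonempty path (the paper finds $P_{3(p-1)+2}$, $P_{3(p-1)+1}$, $P_{3(2u-1)+2}$, $P_{3(2u)+2}$, depending on the case and parity convention for $\Delta^p$), and a nonempty Lando graph means the diagram is \emph{not} $B$-adequate. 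The correct $i_0$ drops out of Theorem~\ref{geometrization_extreme_Kh} combined with $X_{P_n}\simeq S^k$, which is the computation you would need to actually do; the adequacy shortcut does not exist here (adequacy is only used for the $u=0$ case of $\Lambda_4$).

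Second, your plan for $\Lambda_1$ cannot be closed via the Lando graph. Theorem~\ref{geometrization_extreme_Kh} only computes $H^{i,j_{\max}(D)}$. For $\Delta^p$ with $p$ odd the table records $\overline{j}=6p$, but $j_{\max}(D)=6p+|s_BD|>6p$: the homology in the top chain grading vanishes (equivalently, $X_{G_D^B}$ is contractible there), so the geometrization theorem simply reports $0$ and gives no information about the grading $j=6p$ where the actual extremal classes live. You acknowledge that a ``separate argument'' might be needed but do not supply one; this is a genuine gap, not a detail. The paper sidesteps the issue entirely for the $\Lambda_1$ and $\Lambda_3$ rows by citing the full Khovanov homology computation of $\widehat{\Delta^p}$ and $\widehat{\Delta^{2u}\sigma_1\sigma_2}$ from Chandler--Lowrance--Sazdanovic--Summers rather than invoking the Lando-graph machinery. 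If you want a self-contained proof for $\Lambda_1$, you would need a tool that sees below $j_{\max}(D)$, which the geometrization theorem does not provide.
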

\begin{proof}
    For the first row in Table~\ref{tab:extreme_Kh_positive_3-braids}, in {\cite[Cor. 5.7]{Chandler_Lowrance_Sazdanovic_Summers_2022}} the Khovanov homology of links $\widehat{\Delta^p}$ is computed, and all for which $p\geq 0$ have the expected extreme homology, in degrees $\overline{i}(\widehat{\beta})=2p$ and $$\overline{j}(\widehat{\beta})= \left\{          \begin{array}{ll}
                3    & \text{if } \ p=0; \\
                6p   & \text{if } \ p>0 \; \text{odd}; \\
                6p+1 & \text{if } \ p>0 \; \text{even}.
            \end{array} \right.$$

    For the second row in Table~\ref{tab:extreme_Kh_positive_3-braids}, let $D$ be the diagram derived from the given word with the convention $\Delta^{p}=(\sigma_2\sigma_1)^{3(\frac{p-1}{2})+1} \sigma_2$ if $p$ is odd, as in Figure~\ref{fig:Delta_p_sigma1_k1_p_odd}. Similarly, let $D'$ be the diagram derived by using the convention $\Delta^{p}=(\sigma_1\sigma_2)^{3(\frac{p}{2})}$ if $p$ is even, as in Figure~\ref{fig:Delta_p_sigma1_k1_p_even}. By Proposition~\ref{prop:j_min_j_max}, we have $j_{\max}(D)=6p+3k_1$ and $j_{\max}(D')=6p+3k_1+1$. Note that $G_D^B = P_{3(p-1)+2}$, $G_{D'}^B = P_{3(p-1)+1}$, and hence by Theorem~\ref{geometrization_extreme_Kh} and Proposition~\ref{indepence_complex_path}, we have
     \[ H^{i,j_{\max}(D)}(D) \cong H^{3p+k_1-1-i}(X_{G_D^B}) \cong H^{3p+k_1-1-i}(S^{p-1}), \] \[ H^{i,j_{\max}(D')}(D') \cong H^{3p+k_1-1-i}(X_{G_{D'}^B}) \cong H^{3p+k_1-1-i}(S^{p-1}). \]
    Therefore, $H^{i,j_{\max}(D)}(D)$ and $H^{i,j_{\max}(D')}(D')$ are nontrivial when $i=i_0=2p + k_1$. It also follows that $\overline{j}(D)=j_{\max}(D)$ and $\overline{j}(D')= j_{\max}(D')$.

    For the third row in Table~\ref{tab:extreme_Kh_positive_3-braids}, in {\cite[Cor. 5.7]{Chandler_Lowrance_Sazdanovic_Summers_2022}} the Khovanov homology of closures of braids of the form $\Delta^{2u}\sigma_1\sigma_2$ is computed, and all for which $u>0$ have the expected extreme homology, in degrees $\overline{i}(\widehat{\beta})=4u+1$ and $\overline{j}(\widehat{\beta})=12u+5$. For $u=0$, $\widehat{\beta}$ is the unknot, whose unique nontrivial Khovanov homology groups are $H^{0,-1}(\bigcirc) \cong H^{0,1}(\bigcirc) \cong \mathbb{Z}$, and the result holds.

    For the fourth row in Table~\ref{tab:extreme_Kh_positive_3-braids}, do the same as for the second row. Note that if $D$ is as in Figure~\ref{fig:Lambda_4}, then $G_D^B=P_{3(2u-1)+2}$ if $u > 0$. For $u=0$, $D$ is adequate and hence $\overline{j}(\widehat{\beta}) = j_{\max}(D) = 3\left( \sum_{i=1}^{2t} k_i \right) -2t +1$ and $H^{*, \overline{j}(\widehat{\beta})}(\widehat{\beta}) =H^{c_+(D),\overline{j}(\widehat{\beta})} \cong \mathbb{Z}$, so $i_0=c_+(D)=\sum_{i=1}^{2t} k_i$ and the statement holds (see Proposition~\ref{prop:adeqKh}). 

    For the fifth row in Table~\ref{tab:extreme_Kh_positive_3-braids}, use the same argument as for the second and fourth rows. Note that if $D$ is as in Figure~\ref{fig:Lambda_5}, then $G_D^B=P_{3(2u)+2}$. 
    \end{proof}
    
    \begin{figure}[h!]
    \begin{subfigure}[b]{.47\textwidth}
        \centering
        \includegraphics[width=0.999\linewidth]{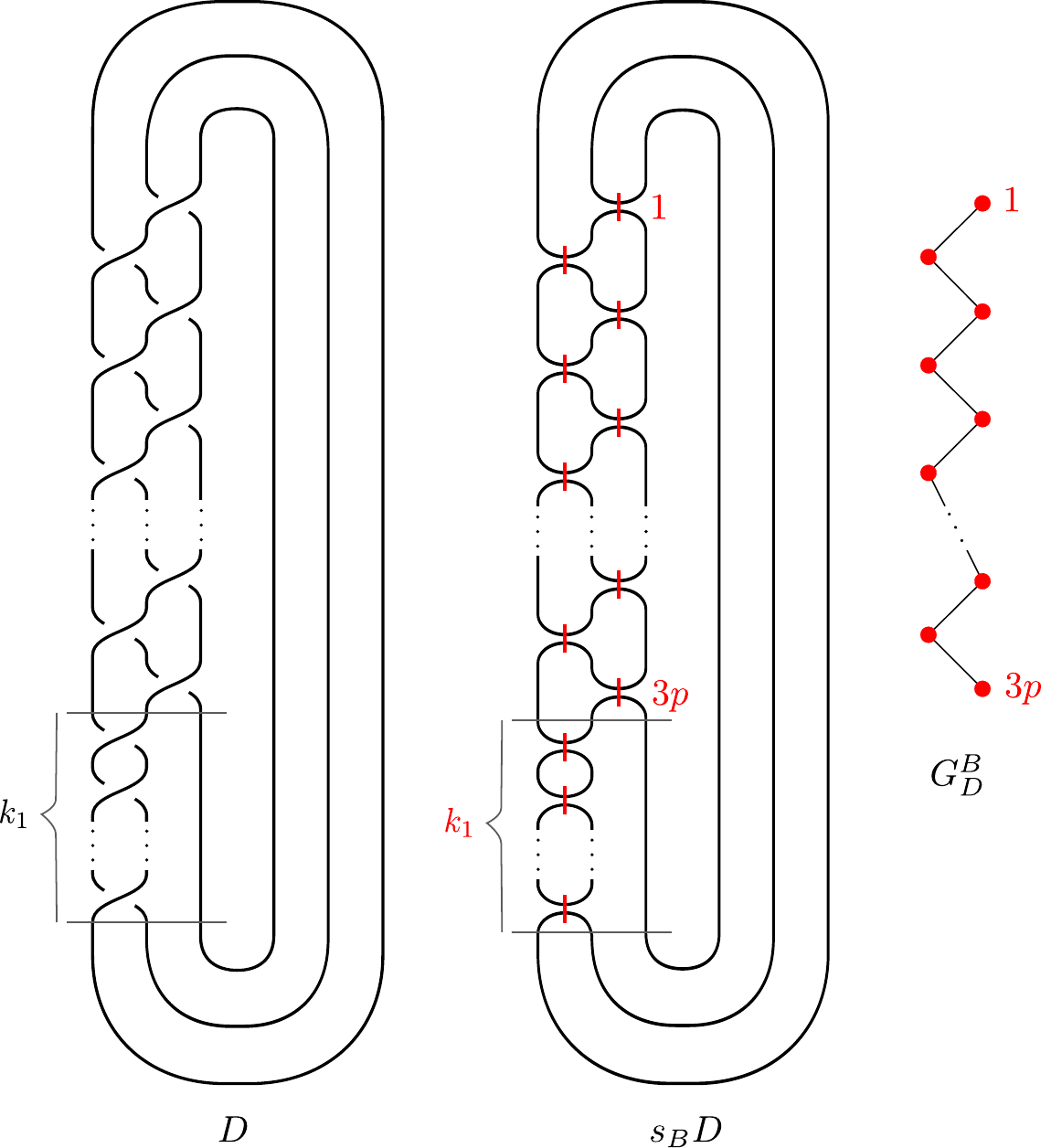}
        \caption{$p$ odd}
        \label{fig:Delta_p_sigma1_k1_p_odd}
    \end{subfigure}
    \begin{subfigure}[b]{.47\textwidth}
        \centering
        \includegraphics[width=0.999\linewidth]{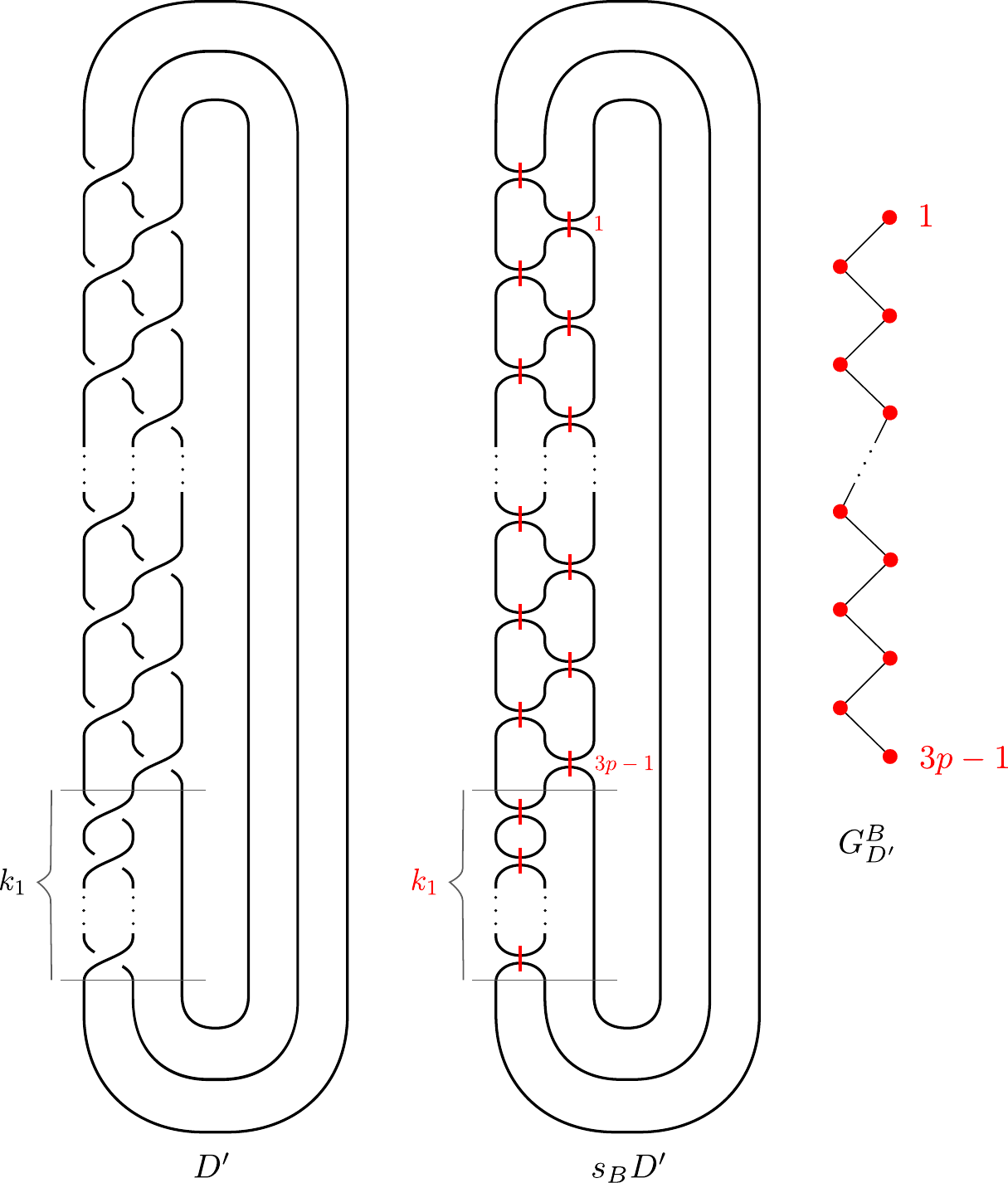}
        \caption{$p$ even}
        \label{fig:Delta_p_sigma1_k1_p_even}
    \end{subfigure}
    \caption{Diagrams $D$ and $D'$ (of $\beta = \Delta^p \sigma_1^{k_1}$, depending on the parity of $p$), their all-$B$ states, and their $B$-Lando graphs.}
    \label{fig:diagram1_Lambda5}
\end{figure}

     \begin{figure}[h!]
    \begin{subfigure}[b]{.47\textwidth}
        \centering
        \includegraphics[width=0.999\linewidth]{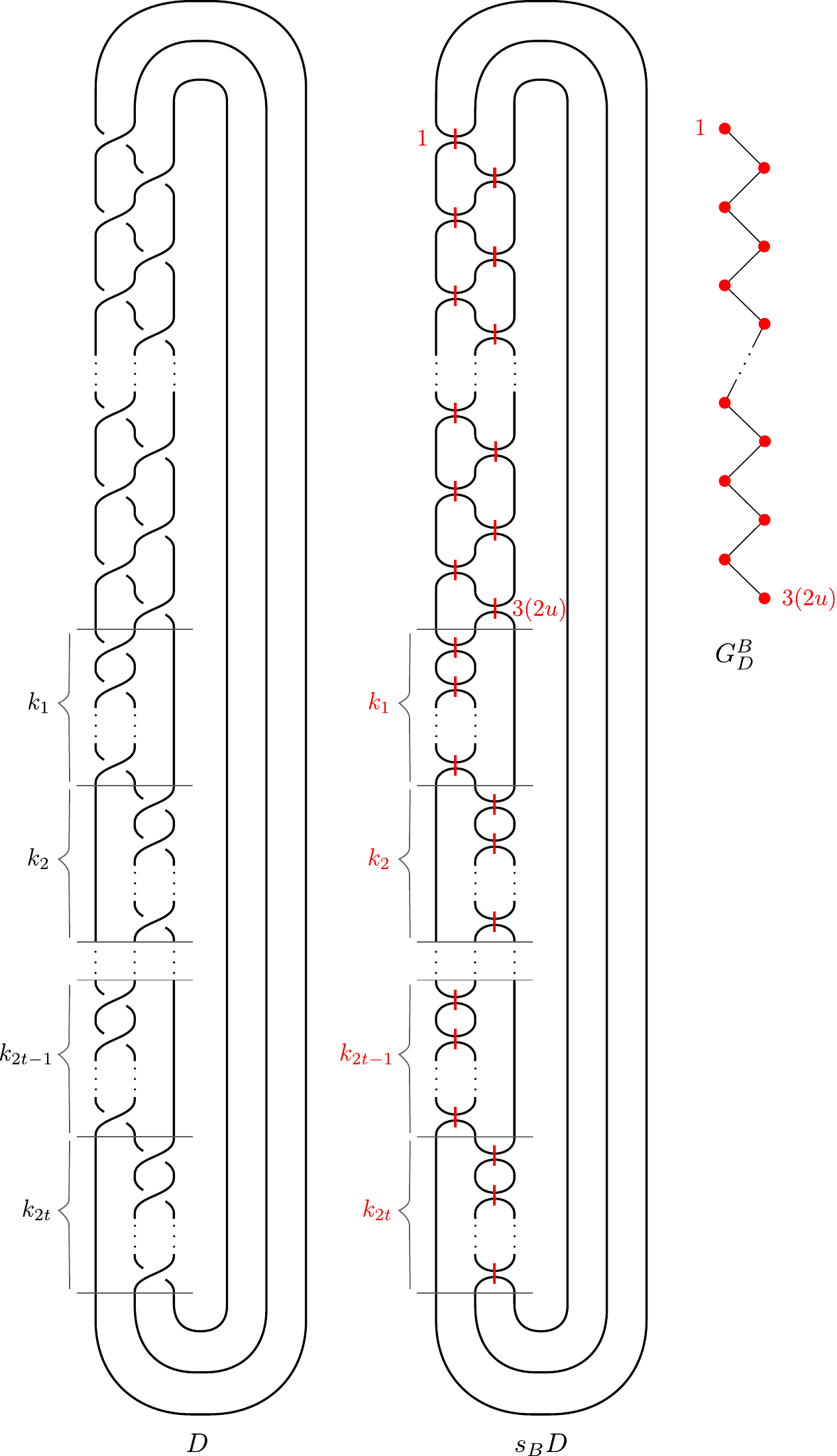}
        \caption{}
        \label{fig:Lambda_4}
    \end{subfigure}
    \begin{subfigure}[b]{.47\textwidth}
        \centering
        \includegraphics[width=0.999\linewidth]{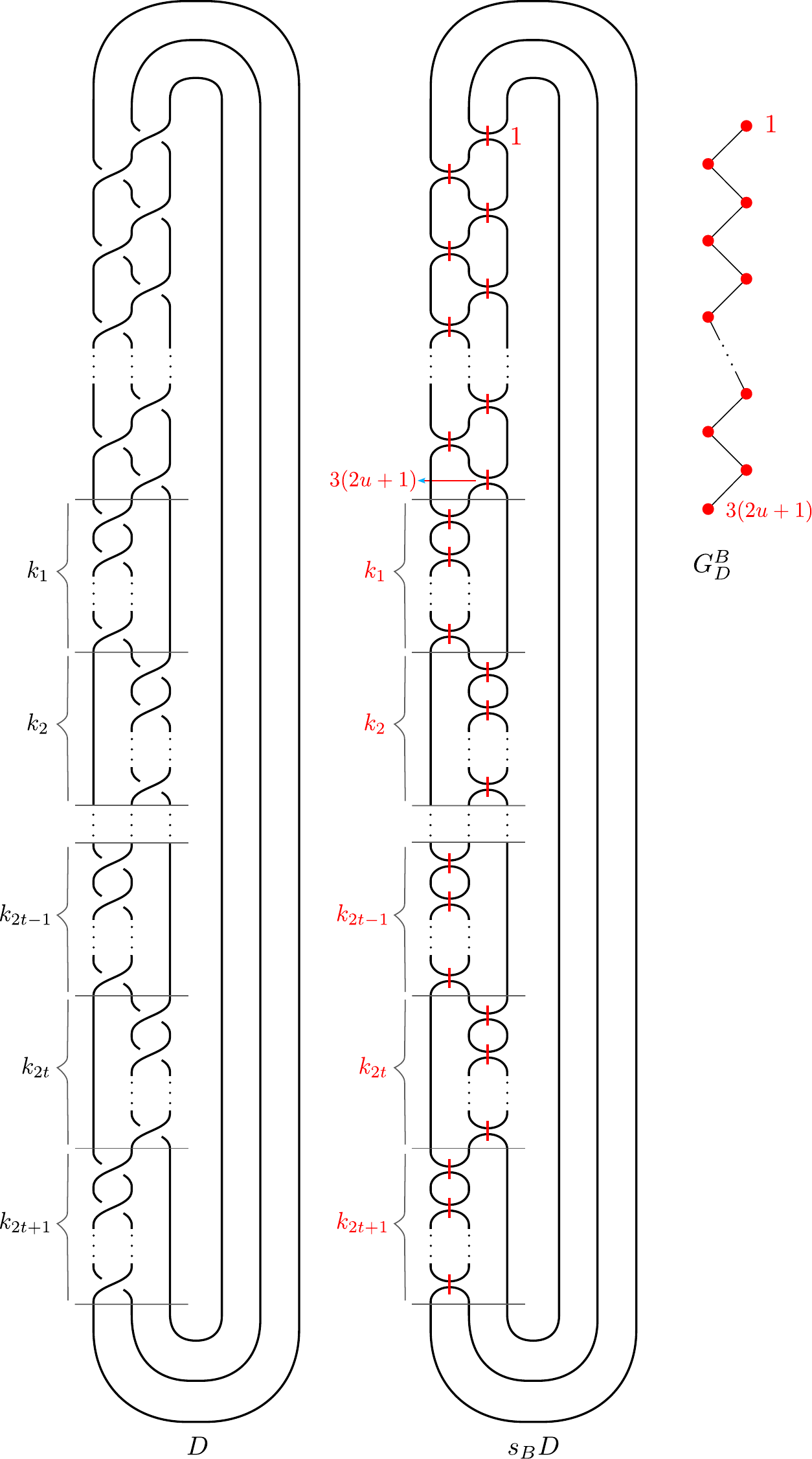}
        \caption{}
        \label{fig:Lambda_5}
    \end{subfigure}
    \caption{Generic diagram of $\widehat{\beta} $, with $\beta \in \Lambda_4$ (a) and $ \beta \in \Lambda_5$ (b), their all-$B$ states and their $B$-Lando graphs.}
    \label{fig:diagram1_Lambda5}
\end{figure}

The next theorem gives bounds on the arc index of the closure of a positive $3$-braid in terms of the classification in Proposition~\ref{prop:positive_infimum_conjugate_5_families}.

\newpage

\begin{theorem}\label{th:arc_index_closed_positive_3-braids}
    	Let $\beta$ be a 3-braid in $\Lambda_i$, for some $i\in \{1,2,3,4,5\}$.
    	\begin{enumerate}
    	\item Let $\beta = \Delta^p \in \Lambda_1$. Then
    	\[\begin{array}{l l}
    	\alpha(\widehat{\beta})=6 & \text{if $p=0$,}\\
	& \\ [-1em]
     	p+4 \leq \alpha(\widehat{\beta}) \leq \frac{1}{2}(3p+1)+3 & \text{if $p>0$ and odd,}\\
	& \\ [-1em]
	p+4\leq \alpha(\widehat{\beta}) \leq \frac{1}{2}(3p)+3 & \text{if    $p>0$ and even.}
    \end{array}\]

    \item  Let $\beta = \Delta^{p} \sigma_1^{k_1} \in \Lambda_2$. Then
     \[ \begin{array}{ll}
    \alpha(\widehat{\beta}) =  k_1+4   & \text{if $p=0$,}\\
        & \\ [-1em]
    p+k_1+3 \leq \alpha(\widehat{\beta}) \leq \frac{1}{2}(3p+1)+k_1+2  & \text{if  $p>0$ and odd,}\\ 
        & \\ [-1em]
     p+k_1+4 \leq \alpha(\widehat{\beta}) \leq \frac{1}{2}(3p)+k_1+3 & \text{if $p>0$ and even.}
    \end{array}\]
    
 \item Let $\beta=\Delta^{2u} \sigma_1 \sigma_2 \in \Lambda_3$. Then
 \[\begin{array}{l l}
 \alpha(\widehat{\beta}) = 2 & \text{if $u=0$,}\\
 & \\ [-1em]
 2u + 5 \leq \alpha(\widehat{\beta}) \leq 3u+4 & \text{if $u>0$.}
 \end{array}\]

    \item Let $\beta = \Delta^{2u}\sigma_1^{k_1}\sigma_2^{k_2}\cdots \sigma_2^{k_{2t}} \in \Lambda_4$. Then
    \[\begin{array}{l l}
     \alpha(\widehat{\beta}) = \sum_{i=1}^{2t} k_i - 2t +4 & \text{if $u=0$,}\\
     & \\ [-1em]
     2u+ \sum_{i=1}^{2t} k_i - 2t +4  \leq \alpha(\widehat{\beta}) \leq 3u + \sum_{i=1}^{2t} k_i - 2t + 3 & \text{if $u>0$.}
     \end{array}\]
    
    \item Let $\beta = \Delta^{2u+1}\sigma_1^{k_1}\sigma_2^{k_2}\cdots \sigma_1^{k_{2t+1}} \in \Lambda_5$. Then
    \[ \textstyle 2u+ \sum_{i=1}^{2t+1} k_i - 2t +4 \leq \alpha(\widehat{\beta}) \leq 3u + \sum_{i=1}^{2t+1} k_i -2t + 4. \]

    \end{enumerate}
\end{theorem}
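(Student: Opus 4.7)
The plan is to handle each of the five families $\Lambda_i$ separately, establishing the stated lower and upper bounds on $\alpha(\widehat{\beta})$ by independent arguments. All the lower bounds will follow uniformly from Ng's Khovanov inequality \eqref{eq:KhBound} applied to two nontrivial Khovanov classes: one at the upper end of the support produced by Proposition~\ref{prop:upper_extreme_Kh_3-braids}, and one at the lower end produced by Proposition~\ref{prop:adeqKh}. The upper bounds will come either from Theorem~\ref{thm:adequate} in adequate subcases or from an explicit good filtered spanning tree combined with Proposition~\ref{prop:summary}.

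For the lower bound, let $D$ be the closure diagram of the chosen representative of $\beta \in \Lambda_i$, and observe that any positive $3$-braid diagram is $A$-adequate: its all-$A$ state is three disjoint vertical strands closed into three circles, so $|s_A D|=3$, and at each crossing the two $A$-arcs lie on distinct components. Proposition~\ref{prop:adeqKh} then supplies a nontrivial class $H^{0,\,c(D)-3}(\widehat{\beta}) \cong \mathbb{Z}$, while Proposition~\ref{prop:upper_extreme_Kh_3-braids} supplies a nontrivial class at bidegree $(i_0,\overline{j}(\widehat{\beta}))$. Inequality~\eqref{eq:KhBound} then yields
\[
\alpha(\widehat{\beta}) \;\geq\; \overline{j}(\widehat{\beta}) - i_0 - c(D) + 3.
\]
Substituting the values of $\overline{j}(\widehat{\beta})$ and $i_0$ from Table~\ref{tab:extreme_Kh_positive_3-braids}, together with the crossing count $c(D)$ obtained from Theorem~\ref{th:crossing_number_braid_closures} (and from Theorem~\ref{th:braid_index_closed_3-braids} in the exceptional family $\widehat{\sigma_1^k\sigma_2}$), recovers the claimed lower bound in every family, including the degenerate $p=0$ subcases of $\Lambda_1$ and $\Lambda_2$.

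For the upper bounds, the degenerate and adequate subcases are settled directly: $\widehat{\Delta^0}$ is the three-component unlink with $\alpha = 6$; $\widehat{\sigma_1^{k_1}}$ is the split union of $T(2,k_1)$ with an unknot, so $\alpha = (k_1+2)+2 = k_1+4$ by Bae and Park; $\widehat{\sigma_1\sigma_2}$ is the unknot; and for $u=0$ in $\Lambda_4$ the standard closure diagram is adequate, so Theorem~\ref{thm:adequate} yields the exact value $\sum_{i=1}^{2t} k_i - 2t + 4$. In the remaining generic subcases, the plan is to expand each $\Delta$ as $\sigma_1\sigma_2\sigma_1$ in the chosen normal form and to exhibit a good filtered spanning tree $T$ of the closure diagram $D$ whose reducible-edge count $r(T)$ equals $c(D)+2$ minus the target upper bound on $\alpha(\widehat{\beta})$. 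The target $r(T)$ is approximately $\lfloor 3p/2\rfloor$ for $\Lambda_1$ and $\Lambda_2$, and is $3u$, $3u+2t-1$, and $3u+2t+1$ for $\Lambda_3$, $\Lambda_4$, and $\Lambda_5$ (with $u\geq 1$) respectively. As noted just after Proposition~\ref{prop:summary}, a non-alternating edge that transversely meets the last two filtration edges is automatically reducible, and each $\Delta$-block in the chosen word contributes a controlled number of such edges once the filtration enters it.

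The main obstacle is the combinatorial construction of the good filtered spanning trees. The edges of $T$ must be linearly ordered so that at every intermediate stage $\overline{T_j}$ contains no bad edge and $T_j$ admits no cutting arc, while simultaneously accumulating enough non-alternating edges transverse to the filtration to reach the required reducible-edge count. The approach is to build $T$ block by block, starting at one end of the braid word and sweeping across it, absorbing one vertex from each $\Delta$- or $\sigma_i^{k_i}$-block at a time; the rigidity of the block structure should make the no-bad-edge and no-cutting-arc conditions straightforward to verify, and the non-alternating edges interior to each $\Delta$-block can then be swept up as reducible edges according to the count above.
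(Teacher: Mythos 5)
Your overall strategy matches the paper's: lower bounds from extreme Khovanov homology together with Ng's bound (the paper routes this through maximum Thurston--Bennequin numbers and Theorem~\ref{thm:tb}, which after feeding in Propositions~\ref{prop:adeqKh} and~\ref{prop:upper_extreme_Kh_3-braids} yields the identical inequality $\alpha(\widehat{\beta})\geq \overline{j}(\widehat{\beta})-i_0-c(D)+3$), and upper bounds from good filtered spanning trees via Proposition~\ref{prop:summary}. However, there are two genuine gaps. The first is in the lower bound for $\Lambda_1$ with $p$ odd: from Table~\ref{tab:extreme_Kh_positive_3-braids} one has $\overline{j}=6p$, $i_0=2p$, $c(D)=3p$, so your formula gives $\alpha\geq p+3$, one short of the stated $p+4$. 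The paper closes that gap by using the additional nontrivial class in bidegree $(i_0-1,\overline{j})$ that the table supplies when $p>1$ is odd, which raises $\max\{j-i\}$ by one; without that refinement your argument does not establish the $\Lambda_1$ lower bound for odd $p$.

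The second gap is that the upper-bound argument is only an outline where the paper gives the actual construction, which is the bulk of the proof. The paper writes $\Delta^p$ using conventions such as $\sigma_2(\sigma_1\sigma_2)^{3(p-1)/2+1}$ for odd $p$ and $(\sigma_1\sigma_2)^{3p/2}$ for even $p$, producing a braid word with no repeated letters inside the $\Delta$-block, and then exhibits, in a separate figure for each family, a concrete good filtered spanning tree and verifies its reducible-edge count. Your proposal instead expands each $\Delta$ as $\sigma_1\sigma_2\sigma_1$, which introduces $\sigma_i^2$ factors; that changes the diagram, changes which edges are non-alternating, and therefore changes the reducible-edge count, so the asserted values of $r(T)$ do not follow without reworking the combinatorics for that convention. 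You also omit the conjugation to $\beta'=\sigma_1^{-1}\beta\sigma_1$ that the paper uses in $\Lambda_4$ for $u\geq 1$ to obtain an amenable diagram. Finally, the claim that the no-bad-edge and no-cutting-arc conditions ``should be straightforward to verify'' is an assertion, not a proof; producing and checking the filtration is precisely the content of the upper bound, and until the explicit trees and counts are supplied the upper-bound half of the argument is not established.
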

\begin{proof}
We follow a common strategy for almost every case. Unless otherwise stated, let $D$ be the diagram derived from the normal form of $\beta$; note that in each case we should set a convention for $\Delta^p$. 

As $D$ is positive and hence $A$-adequate, by Theorem~\ref{thm:Aadeq} we have $ \overline{tb}(\widehat{\beta}) = c(D) - 3$. Let us define $\mathbf{m}= \min\{ j-i \; |\; H^{i,j}(\widehat{\beta}^*) \neq 0\}$. Inequality (\ref{eq:Ng_2005}) implies $\overline{tb}(\widehat{\beta}^*) \leq \mathbf{m}$. Furthermore, by \cite[Cor.~11]{Khovanov_2000} we have $H^{i,j}(\widehat{\beta}^*) \cong H^{-i,-j}(\widehat{\beta})$ if these are free $\mathbb{Z}$-modules. Then, in the setting of Proposition~\ref{prop:upper_extreme_Kh_3-braids}, we have 
\begin{equation}\label{tb_eq2}
    \overline{tb}(\widehat{\beta}^*) \leq \mathbf{m} \leq - \overline{j}(\widehat{\beta}) + i_0(\widehat{\beta}).
\end{equation}
By Theorem~\ref{thm:tb}, we conclude that
\begin{equation}\label{lower_bound_alpha_proof}
    \alpha(\widehat{\beta}) =  -\overline{tb}(\widehat{\beta}) -  \overline{tb}(\widehat{\beta}^*) \geq  - c(D) +3+ \overline{j}(\widehat{\beta}) - i_0(\widehat{\beta}).
\end{equation}

On the other hand, let $r(T)$ be the number of reducible edges in a good filtered spanning tree $T$ of $D$. Therefore, by Proposition~\ref{prop:summary}
\begin{equation}\label{upper_bound_alpha_proof}
     \alpha(\widehat{\beta})  \leq c(D)+2-r(T).
\end{equation}

To finish the proof, it suffices using Proposition~\ref{prop:positive_infimum_conjugate_5_families} to obtain the expected lower bound for the arc index, and finding an accurate good spanning tree of $D$ to obtain the expected upper bound in each case. We address each case separately. 
   \begin{enumerate}
   \item[$(\Lambda_1)$] Set the convention $\Delta^p = \sigma_2(\sigma_1 \sigma_2)^{3\left( \frac{p-1}{2}\right)+1}$ if $p$ is odd and $\Delta^p = (\sigma_1 \sigma_2)^{3\left( \frac{p}{2}\right)}$ if $p$ is even. If $p=0$, then $\widehat{\beta}$ is the 3-component trivial link and $\alpha(\widehat{\beta}) = 6$. Let $p>0$. If $p$ is even, as $c(D)=3p$ we have 
        \[ \alpha(\widehat{\beta}) \geq -3p + 3 +(6p+1) -2p = p+4.\]
    Furthermore, in Figure~\ref{fig:spanning_tree_Lambda_1_even} a good spanning tree $T$ of $D$ is shaded, and $r(T)=3 \left( \frac{p}{2} \right) -1$. Then we have
    \[ \alpha(\widehat{\beta}) \leq 3p + 2  - \left( 3 \left( \frac{p}{2} \right) -1 \right) = \frac{1}{2}(3p)+3.\]

    \noindent If $p$ is odd, note that by Proposition~\ref{prop:upper_extreme_Kh_3-braids} $H^{i_0-1, \overline{j}}(\widehat{\beta})=\mathbb{Z}$. Therefore, in this case we can refine (\ref{tb_eq2}) and (\ref{lower_bound_alpha_proof}) as
        \[ \overline{tb}(\widehat{\beta}^*) \leq \mathbf{m} \leq - \overline{j}(\widehat{\beta}) + i_0(\widehat{\beta}) -1 \quad \text{ and } \quad  \alpha(\widehat{\beta}) \geq  - c(D) +3+ \overline{j}(\widehat{\beta}) - i_0(\widehat{\beta})+1, \]
    respectively. Since $c(D)=3p$, we have
    \[ \alpha(\widehat{\beta}) \geq -3p + 3 +6p -2p +1 = p+4. \] 
    In addition, in Figure~\ref{fig:spanning_tree_Lambda_1_odd} a good spanning tree $T$ of $D$ is shaded, and $r(T)=3 \left( \frac{p-1}{2} \right)$. Then we have
   \[ \alpha(\widehat{\beta}) \leq 3p + 2  - 3 \left( \frac{p-1}{2} \right) = \frac{1}{2}(3p+1)+3.\]
    
   \item[$(\Lambda_2)$] Set the convention $\Delta^p = \sigma_2(\sigma_1 \sigma_2)^{3\left( \frac{p-1}{2}\right)+1}$ if $p$ is odd and $\Delta^p = (\sigma_2 \sigma_1)^{3\left( \frac{p}{2}\right)}$ if $p$ is even. Note that $c(D)=3p+k_1$.  If $p=0$, $\widehat{\beta}$ is the disjoint union of an unknot $U$ and the torus link $T_{2,k_1}$. For $k_1=1$, $T_{2,1}$ is also the unknot and the result holds; for $k_1\geq 2$, the (standard) diagram $D'$ of $T_{2,k_1}$ is adequate and by Theorem~\ref{thm:adequate} we have $$\alpha(\widehat{\beta}) = \alpha(U)+\alpha(T_{2,k_1}) = 2 + (|s_AD'|+|s_BD'|) = 2 + (2+k_1)=k_1+4.$$

   If $p>0$ is even, then
   \[  \alpha(\widehat{\beta}) \geq -(3p+k_1) + 3 + (6p+3k_1+1) - (2p+k_1) = p+k_1+4.\]
   Furthermore, in Figure~\ref{fig:spanning_tree_Lambda_2_even} a good spanning tree $T$ of $D$ is shaded, and $r(T)=3 \left( \frac{p}{2} \right)-1$.
    Then we have
   \[ \alpha(\widehat{\beta}) \leq (3p+k_1) + 2 - \left(3\left( \frac{p}{2} \right)-1 \right) = \frac{1}{2}(3p)+k_1+3.\]
   
   If $p>0$ is odd, then
   \[ \alpha(\widehat{\beta}) \geq -(3p+k_1) + 3 + (6p+3k_1) - (2p+k_1) = p+k_1+3.\]
   In addition, in Figure~\ref{fig:spanning_tree_Lambda_2_odd} a good spanning tree $T$ of $D$ is shaded, and $r(T)=3 \left( \frac{p-1}{2} \right)+1$. Then we have
   \[ \alpha(\widehat{\beta}) \leq (3p+k_1) + 2  - \left(3 \left( \frac{p-1}{2} \right)+1 \right) = \frac{1}{2}(3p+1)+k_1+2.\]

    \item[$(\Lambda_3)$] Set the convention $\Delta^{2u} = (\sigma_1 \sigma_2)^{3u}$. If $u=0$, then $\widehat{\beta}$ is the unknot and $\alpha(\widehat{\beta})=2$. Let $u>0$, we have $c(D)=6u+2$, so
    \[\textstyle \alpha(\widehat{\beta}) \geq -(6u+2) + 3 + 12u+5 - (4u+1) = 2u + 5.\]
     In Figure~\ref{fig:spanning_tree_Lambda_3}, a good spanning tree $T$ of $D$ is shaded. For this spanning tree, the number of reducible edges is $r(T) = 3u$. Then
   \[ \textstyle  \alpha(\widehat{\beta}) \leq (6u + 2) + 2 -3u  = 3u + 4.  \]

    \item[$(\Lambda_4)$] The diagram $D$ is adequate if $u=0$, and in such case $|s_AD|=3$, $|s_BD|=\sum_{i=1}^{2t} 
    k_i - 2t +1$. By Theorem~\ref{thm:adequate} we have $\alpha(\widehat{\beta})=|s_AD|+|s_BD| = \sum_{i=1}^{2t} k_i -2t + 4$. 
    
    If $u>0$, consider $\beta'=\sigma_1^{-1}\beta \sigma_1 = \Delta^{2u}\sigma_1^{k_1-1} \sigma_2^{k_2} \cdots \sigma_{2}^{k_{2t}} \sigma_1$. Let $D'$ be the diagram derived from the given representative of $\beta'$, with the convention $\Delta^{2u} = (\sigma_2 \sigma_1)^{3u}$. We have $c(D')=6u + \sum_{i=1}^{2t} k_i$, so
    \[ \textstyle \begin{array}{rcl}
     \alpha(\widehat{\beta}) = \alpha(\widehat{\beta'}) & \geq & -(6u + \sum_{i=1}^{2t} k_i) +3 + (12u + 3\sum_{i=1}^{2t} k_i -2t +1) - (4u + \sum_{i=1}^{2t} k_i) \\
     & = & 2u + \sum_{i=1}^{2t} k_i -2t + 4. 
     \end{array} \]
    
    In Figure~\ref{fig:spanning_tree_Lambda_4}, a good spanning tree $T$ of $D'$ is shaded. For this spanning tree, the number of reducible edges is $r(T) = 3u+2t-1$. Then
         \[ \textstyle  \alpha(\widehat{\beta}) = \alpha(\widehat{\beta'}) \leq (6u + \sum_{i=1}^{2t} k_i) + 2 - (3u+2t-1)  = 3u + \sum_{i=1}^{2t} k_i -2t + 3.  \]    
       
   \item[$(\Lambda_5)$] Set the convention $\Delta^{2u+1} = \sigma_2 (\sigma_1 \sigma_2)^{3u+1}$. We have $c(D) = 6u + 3 + \sum_{i=1}^{2t+1} k_i$, so
   \[  \textstyle  \begin{array}{rcl}
   \alpha(\widehat{\beta}) & \geq & - (6u + 3 + \sum_{i=1}^{2t+1} k_i) + 3 + (12u + 3\sum_{i=1}^{2t+1} k_i - 2t +6) - (4u+2+\sum_{i=1}^{2t+1} k_i) \\
    & = & 2u + \sum_{i=1}^{2t+1} k_i - 2t + 4.
   \end{array} \]
   In Figure~\ref{fig:spanning_tree_Lambda_5}, a good spanning tree $T$ of $D$ is shaded. For this spanning tree, the number of reducible edges is $r(T) = 3u+2t+1$. Then
   \[ \textstyle  \alpha(\widehat{\beta}) \leq (6u + 3 + \sum_{i=1}^{2t+1} k_i) + 2 - (3u+2t+1)  = 3u + \sum_{i=1}^{2t+1} k_i -2t + 4. \qedhere  \]   
   \end{enumerate}

   \end{proof}

    \begin{figure}[h!]
    \begin{subfigure}[b]{.45\textwidth}
        \centering
        \includegraphics[width=0.6\linewidth]{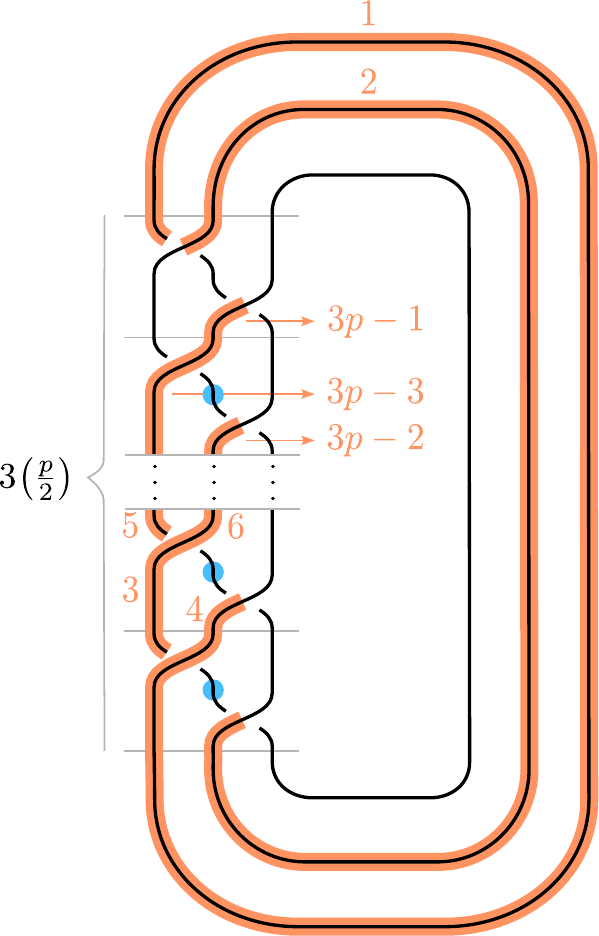}
        \caption{$p$ even }
        \label{fig:spanning_tree_Lambda_1_even}
    \end{subfigure}
    \begin{subfigure}[b]{.45\textwidth}
        \centering
        \includegraphics[width=0.7\linewidth]{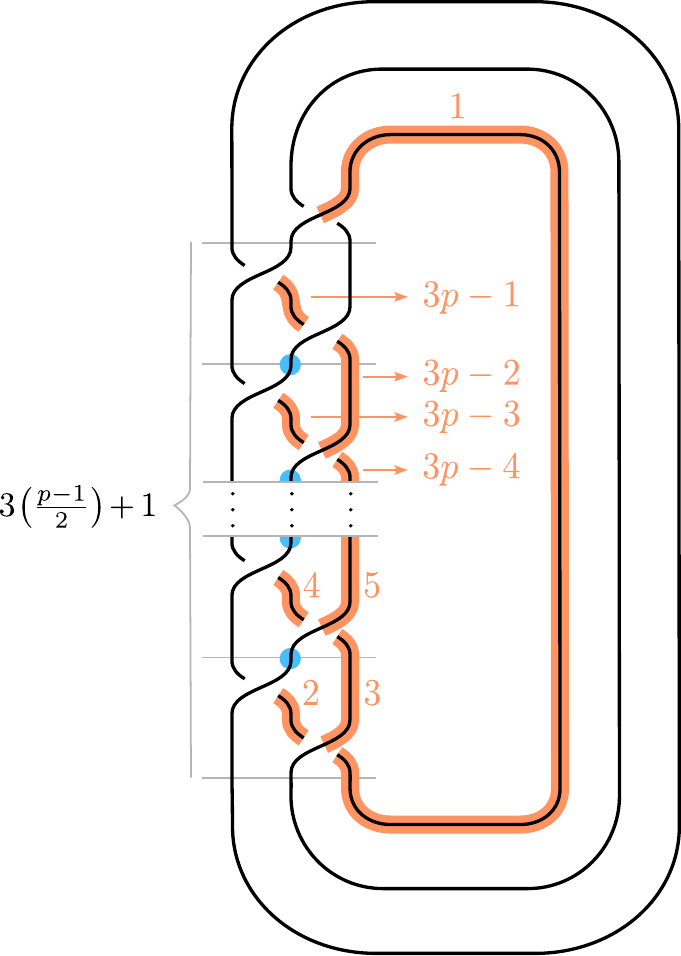}
        \caption{$p$ odd }
        \label{fig:spanning_tree_Lambda_1_odd}
    \end{subfigure}
    \caption{Generic diagram of $\widehat{\beta}$ and a good spanning tree shaded on it, with $\beta = \Delta^p \in \Lambda_1$. Edges marked with a small disk are reducible edges.}
    \label{fig:spanning_trees_1.1}
    \end{figure}

    \begin{figure}[h!]
    \centering
    \begin{subfigure}[b]{.45\textwidth}
        \centering
        \includegraphics[width=0.62\linewidth]{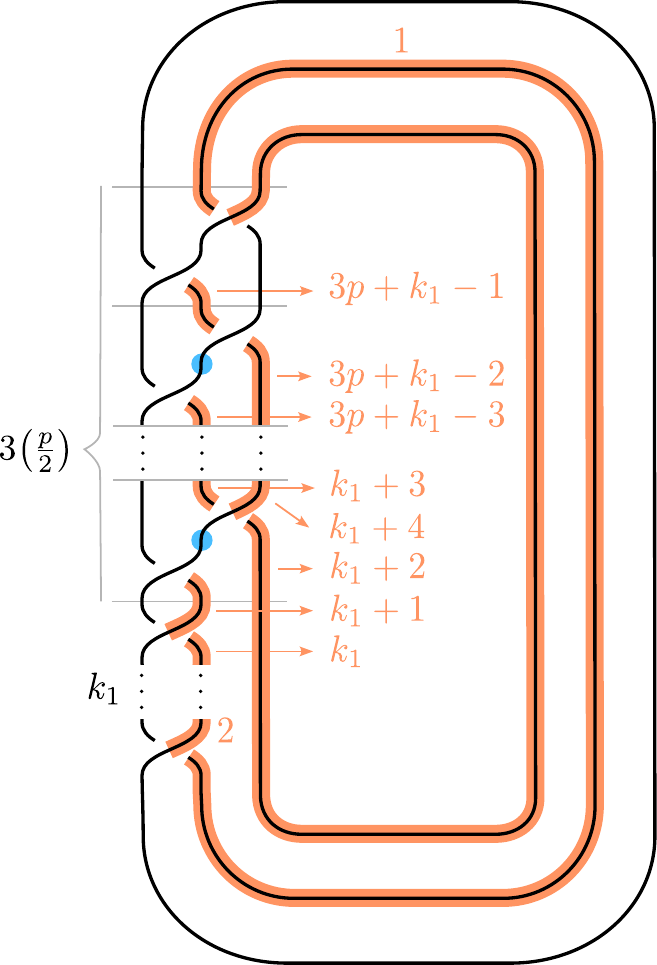}
        \caption{$p$ even}
        \label{fig:spanning_tree_Lambda_2_even}
    \end{subfigure}
     \begin{subfigure}[b]{.45\textwidth}
        \centering
        \includegraphics[width=0.68\linewidth]{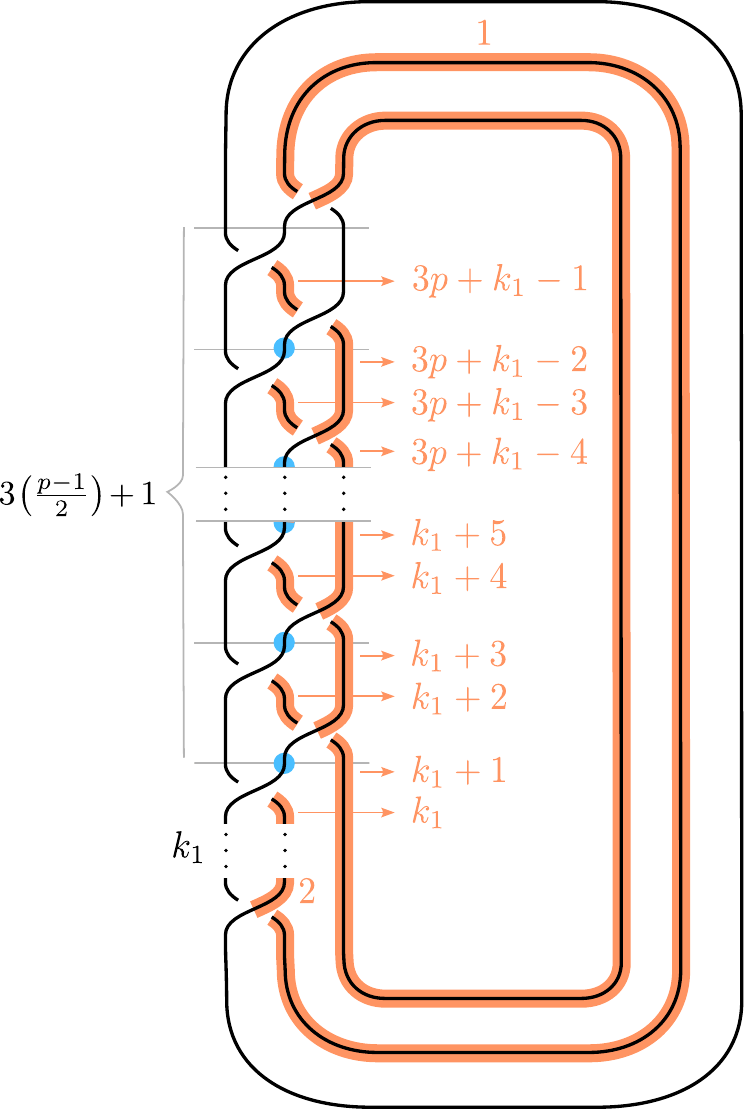}
        \caption{$p$ odd}
        \label{fig:spanning_tree_Lambda_2_odd}
    \end{subfigure}
    \caption{Generic diagram of $\widehat{\beta}$ and a good spanning tree shaded on it, with $\beta = \Delta^{p} \sigma_1^{k_1}\in \Lambda_2$. Edges marked with a small disk are reducible edges.}
    \label{fig:spanning_trees_1.2}
    \end{figure}
    
    \begin{figure}[h!]
        \centering
        \includegraphics[width=0.25\linewidth]{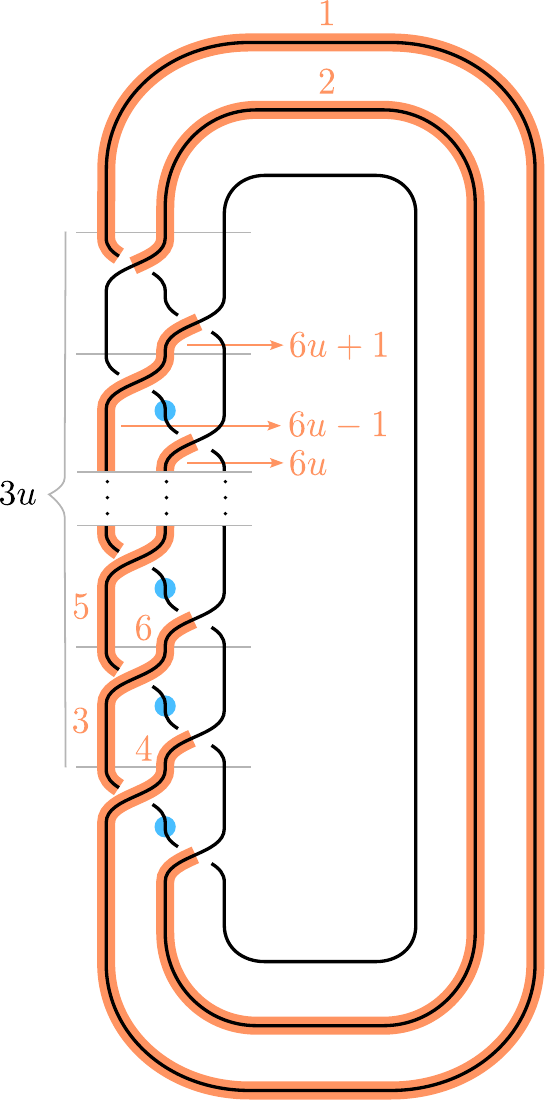}
    \caption{Generic diagram of $\widehat{\beta}$ and a good spanning tree shaded on it, with $\beta = \Delta^{2u}\sigma_1\sigma_2 \in \Lambda_3$. Edges marked with a small disk are reducible edges.}
    \label{fig:spanning_tree_Lambda_3}
    \end{figure}
    
    \begin{figure}[h!]
    \begin{subfigure}[b]{.48\textwidth}
        \centering
        \includegraphics[width=0.7\linewidth]{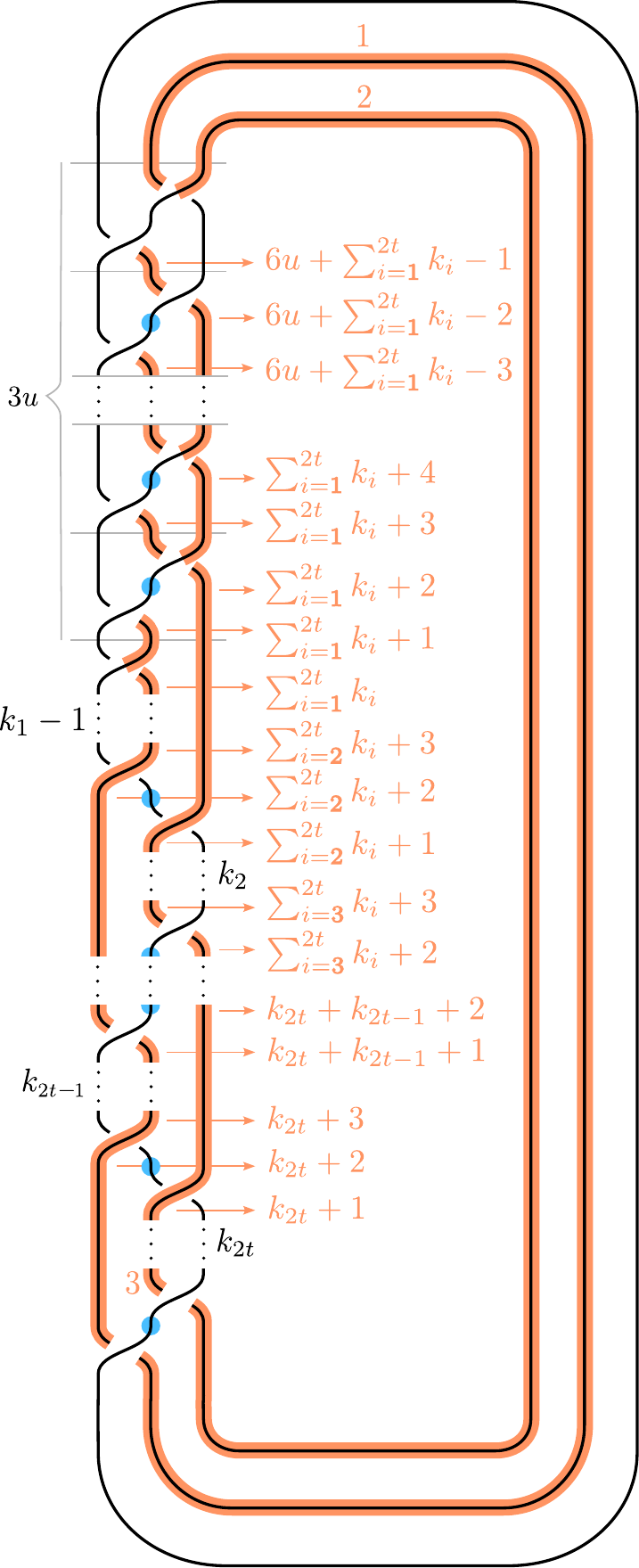}
        \caption{}
        \label{fig:spanning_tree_Lambda_4}
    \end{subfigure}
    \begin{subfigure}[b]{.48\textwidth}
        \centering
        \includegraphics[width=0.9\linewidth]{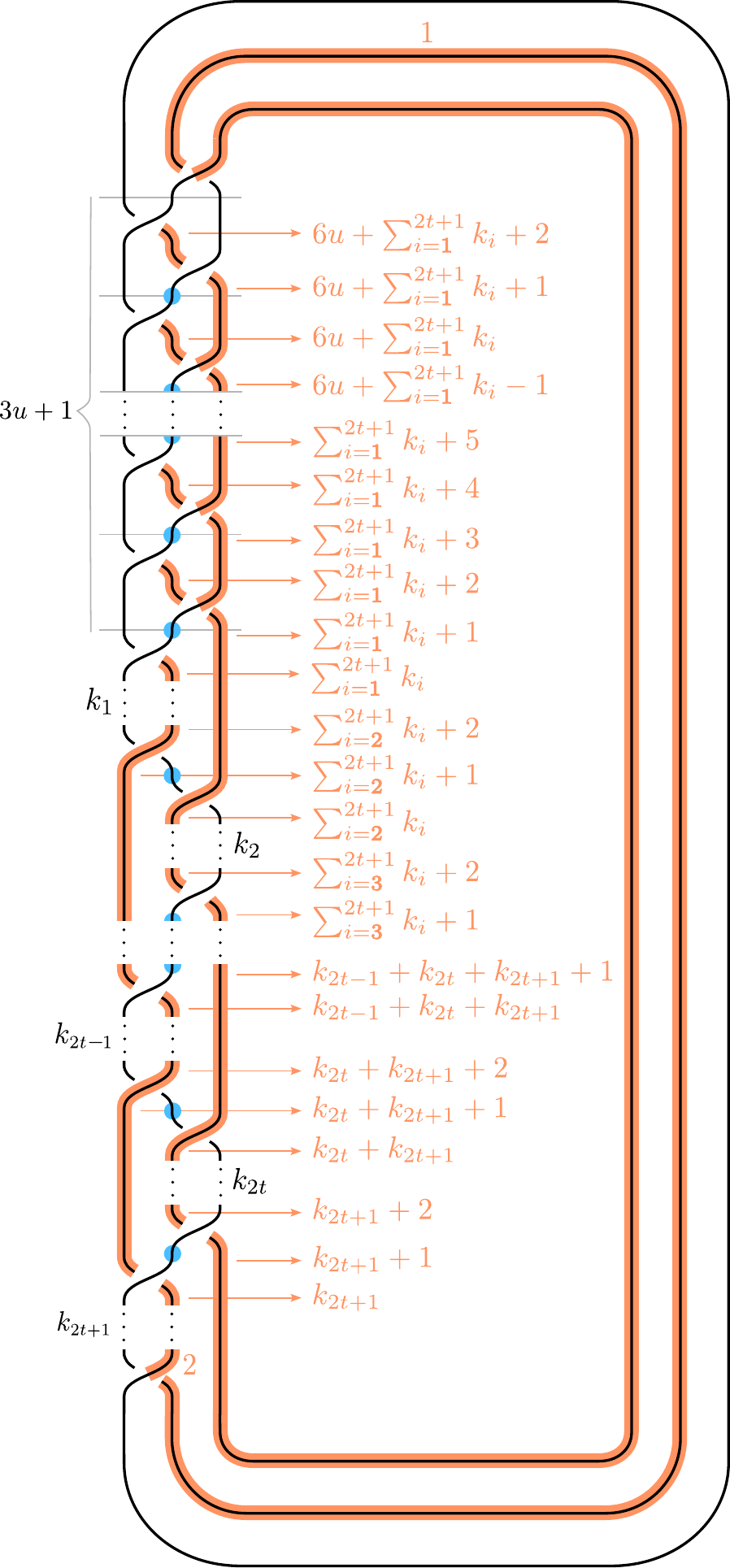}
        \caption{}
        \label{fig:spanning_tree_Lambda_5}
    \end{subfigure}
    \caption{Generic diagrams of $\widehat{\beta'}$ (a) and $\widehat{\beta}$ (b), and a good spanning tree shaded on them, with $\beta' = \Delta^{2u}\sigma_1^{k_1-1}\sigma_2^{k_2}\cdots \sigma_2^{k_{2t}} \sigma_1$ and $\beta = \Delta^{2u+1}\sigma_1^{k_1}\sigma_2^{k_2}\cdots \sigma_1^{k_{2t+1}} \in \Lambda_5$. Edges marked with a small disk are reducible edges.}
    \label{fig:spanning_trees_2.2}
\end{figure}

\begin{remark}
If $\gamma \in \Lambda_i$,  $i \in \{1, 2,3,4,5\}$, its infimum is explicitly described by the representatives given in Proposition~\ref{prop:positive_infimum_conjugate_5_families}. In addition, it turns out that $\inf_s(\gamma)=\inf(\gamma)$ (see, e.g., \cite[Rem. 2.3]{dVV-Gonzalez-Meneses_Silvero_2025}). Consequently, Theorem~\ref{th:arc_index_closed_positive_3-braids} explicitly calculates the arc index for the closure of any 3-braid with a summit infimum of $0$, $1$, or $2$, as the stated lower and upper bounds coincide.
\end{remark}

\subsection{Proof of Conjecture~\ref{conj:c+2-a_gT} for closed positive 3-braids.}

We are interested in links that are prime and non-split. Then, to discard closed 3-braids that do not satisfy these two conditions, we have the following two results. The conclusion is that every $\widehat{\beta}$, with $\beta \in \mathbb{B}_3$, is prime and non-split unless $\beta$ is conjugate to $\sigma_1^a\sigma_2^b$ for some $a,b \in \mathbb{Z} \setminus \{-1,1\}$, or to $\sigma_1^{a}$ for some $a \in \mathbb{Z}$.

\begin{theorem}[{\cite[Sec. 2]{Morton_1979}}]\label{th:prime_closed_3-braids} 
Let $\beta$ be a 3-braid. Then, $\widehat{\beta}$ is not a prime link if and only if $\beta$ is conjugate to $\sigma_1^a \sigma_2^b$ for some $a,b \in \mathbb{Z}\setminus \{-1,1\}$.
\end{theorem}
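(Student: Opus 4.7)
The proof splits into the standard easy and hard directions, and the real content lies in the converse.

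\textbf{Easy direction.} Suppose $\beta$ is conjugate to $\sigma_1^a\sigma_2^b$ with $a,b \in \mathbb{Z}\setminus\{-1,1\}$. Because $\widehat{\beta}$ depends only on the conjugacy class of $\beta$, I may replace $\beta$ by $\sigma_1^a\sigma_2^b$ itself. In this braid word, $\sigma_1$ involves only strands $1$ and $2$, while $\sigma_2$ involves only strands $2$ and $3$, so the middle strand runs between the $\sigma_1^a$-block and the $\sigma_2^b$-block as an unknotted arc. I would enclose a neighborhood of this middle segment by a round 2-sphere $S$ meeting $\widehat{\beta}$ transversely in two points, thereby separating the closure into the $(2,a)$-torus tangle and the $(2,b)$-torus tangle. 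When $a,b \notin \{-1,1\}$ neither resulting link summand is an unknot, so $S$ realizes a nontrivial decomposition $\widehat{\beta} = T(2,a)\#T(2,b)$ and $\widehat{\beta}$ is not prime.

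\textbf{Hard direction.} Conversely, assume $\widehat{\beta}$ is not prime, so there exists a 2-sphere $S \subset S^3$ intersecting $\widehat{\beta}$ transversely in exactly two points and bounding a nontrivial tangle on each side. The plan is to invoke the Birman--Menasco machinery of incompressible surfaces in closed braid complements to isotope $S$ into a standard position relative to the braid axis $A$, possibly after applying exchange moves to $\beta$ that preserve its conjugacy class in $\mathbb{B}_3$. In this standard position, $S$ meets $A$ transversely in a controlled number of points, and for a three-strand braid the combinatorics are so rigid that $S$ must meet $A$ in exactly two points. Geometrically this cuts the solid cylinder carrying the three braid strands into two half-cylinders, one carrying a sub-braid on strands $\{1,2\}$ and the other a sub-braid on strands $\{2,3\}$, both of which are abelian (generated by $\sigma_1$ and $\sigma_2$ respectively). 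Up to conjugation, this yields a factorization $\beta = \sigma_1^{a}\sigma_2^{b}$. The restriction $a,b \notin \{-1,1\}$ then falls out by contrapositive: if $a = \pm 1$ (say), a Markov destabilization applied to the strands-$\{1,2\}$ side shows that one of the tangles bounded by $S$ is trivial, contradicting nontriviality of the decomposition.

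\textbf{Main obstacle.} The substantive step is the geometric standardization of the decomposing sphere with respect to the braid axis. Making that rigorous requires careful bookkeeping of isotopies that do not destroy the closed-braid structure, together with the use of exchange moves to eliminate unwanted intersections between $S$ and the disks of a fibering of the complement of $A$. Once the sphere has been placed in standard position, the small strand count of $\mathbb{B}_3$ makes the translation from the geometric splitting to the algebraic factorization immediate; but getting there relies on either invoking Birman--Menasco's general decomposition theorem for composite closed braids or reproducing its argument in this low-complexity setting.
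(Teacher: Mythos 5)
The paper does not prove this statement; it quotes it verbatim from Morton's 1979 paper (Section 2), so there is no in-paper argument to compare against. What you have written is therefore an independent reconstruction, and I will judge it on its own terms.

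Your \emph{easy direction} is essentially sound when $|a|,|b|\geq 2$, where the middle strand really does give a connected-sum sphere realizing $T(2,a)\# T(2,b)$. However the hypothesis also allows $a=0$ or $b=0$ (since $0\notin\{-1,1\}$); in that case the closure is the split link $U\sqcup T(2,b)$ rather than a connected sum, and your sentence ``neither resulting link summand is an unknot'' is not literally applicable. The conclusion that $\widehat\beta$ fails to be prime still holds --- a sphere enclosing the free unknotted component plus a small arc of the other component bounds a nontrivial tangle on each side --- but this case needs to be separated out and argued differently.

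Your \emph{hard direction} is an outline rather than a proof, and you correctly flag the central missing step yourself. Two concrete issues. First, the assertion that exchange moves ``preserve [$\beta$'s] conjugacy class in $\mathbb{B}_3$'' is stated as if automatic, but it is not: in general exchange moves preserve only the link type of the closure, not the conjugacy class of the braid. It does happen to be true in $\mathbb{B}_3$ (an exchange move has, up to cyclic permutation, the form $\sigma_1^p\sigma_2\sigma_1^q\sigma_2^{-1}\leftrightarrow\sigma_1^p\sigma_2^{-1}\sigma_1^q\sigma_2$, and the identities $\sigma_2\sigma_1^q\sigma_2^{-1}=\sigma_1^{-1}\sigma_2^q\sigma_1$ and $\sigma_2^{-1}\sigma_1^q\sigma_2=\sigma_1\sigma_2^q\sigma_1^{-1}$ show both sides are conjugate to $\sigma_1^p\sigma_2^q$), but that verification has to be made, and it matters here because the conclusion is a statement about the conjugacy class of $\beta$. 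Second, the crucial geometric step --- standardizing the decomposing sphere relative to the braid axis and pinning down its intersection number with the axis --- is deferred to the Birman--Menasco composite-closed-braid machinery. That is a legitimate modern route, but note it is not Morton's: Birman--Menasco's braid-foliation technology postdates the 1979 paper by a decade, so your sketch is a genuinely different (and heavier) approach from the cited source. If you want a complete argument you would need either to carefully invoke the relevant theorem of Birman--Menasco (``Studying links via closed braids IV'') for composite and split closed braids, or to give an elementary argument tailored to three strands; as written, ``the combinatorics are so rigid that $S$ must meet $A$ in exactly two points'' is an assertion, not a proof.
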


\begin{theorem}[{\cite[Th. 5.1]{Murasugi_1974}}]\label{th:split_closed_3-braids}
    Let $\beta$ be a $3$-braid. Then, $\widehat{\beta}$ is a split link if and only if $\beta$ is conjugate to $\sigma_1^a$ for some $a \in \mathbb{Z}$.
\end{theorem}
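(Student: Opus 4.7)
The plan is to prove the two directions of the biconditional separately; the reverse direction is immediate, while the forward direction is where the real work lies. If $\beta$ is conjugate to $\sigma_1^a$, then $\widehat{\beta}$ is isotopic to $\widehat{\sigma_1^a}$, since conjugate braids have isotopic closures. Because $\sigma_1^a$ acts trivially on the third strand, that strand closes to an unknot disjoint from the closure of the first two strands (which form a $(2,a)$-torus link, a Hopf link, an unknot, or a pair of unknots depending on $a$). Hence $\widehat{\beta}$ is visibly split.

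For the forward direction, suppose $\widehat{\beta}$ is split. The plan is to show that, after a suitable conjugation, $\beta$ does not involve $\sigma_2$, and hence equals $\sigma_1^a$ for some $a \in \mathbb{Z}$. Let $A \subset S^3$ denote the braid axis, let $V = S^3 \setminus N(A) \cong D^2 \times S^1$ be the associated solid torus, so that $\widehat{\beta}\subset V$ is a geometric closed braid, and let $S \subset S^3 \setminus \widehat{\beta}$ be an embedded $2$-sphere separating $\widehat{\beta}$ into two nonempty sublinks. The main step is to isotope $S$ so that $S \cap A = \emptyset$; once this is achieved, $S$ lies in the solid torus $V$ and bounds a $3$-ball $B \subset V$, because $H_2(V) = 0$.

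To push $S$ off the braid axis, I would invoke Birman and Menasco's theory of braid foliations for incompressible surfaces in closed braid complements. Placing $S$ in thin position with respect to the disk fibration of $V$ and analyzing the induced singular foliation on $S$, one eliminates each intersection point of $S \cap A$ via destabilization or exchange moves on $\beta$. With $S$ disjoint from $A$ and made transverse to the disk fibers $D^2 \times \{t\}$, tracing the strands through each fiber shows that the strands on the two sides of $S$ never cross one another, so up to conjugation $\beta$ is a product of braids on disjoint subsets of $\{1,2,3\}$. Any such decomposition in $\mathbb{B}_3$ must leave one strand unbraided, so $\beta$ is conjugate either to a word in $\sigma_1^{\pm 1}$ alone or to a word in $\sigma_2^{\pm 1}$ alone; since $\Delta \sigma_2 \Delta^{-1} = \sigma_1$ in $\mathbb{B}_3$, both possibilities collapse to $\beta \sim \sigma_1^a$ for some $a \in \mathbb{Z}$. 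The principal obstacle is the first step: the algebraic conclusion is short once a splitting sphere disjoint from $A$ has been produced, but establishing the existence of such a sphere requires the full braid-foliation machinery (or, for $n=3$, a careful but still nontrivial ad hoc case analysis of how $S$ meets the fibers of $V$).
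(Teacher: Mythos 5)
This statement is not proved in the paper; it is a citation of Murasugi's Theorem~5.1 from \cite{Murasugi_1974}, where the result is established algebraically by classifying conjugacy classes of $3$-braids and examining their closures directly. So there is no proof in the paper to compare against, but your blind attempt can still be assessed on its own terms. Your reverse direction is fine: the third strand of $\sigma_1^a$ is untouched, so it closes to a disjoint unknot and $\widehat{\sigma_1^a}$ is split.

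The forward direction, however, contains a fatal error at its main step. You propose to isotope the splitting sphere $S$ so that $S\cap A=\emptyset$, and then observe that $S$ bounds a $3$-ball $B$ in $V=S^3\setminus N(A)$. But this configuration is topologically impossible. If $S$ is disjoint from $A$ and bounds a ball $B\subset V$, then exactly one of the two nonempty sublinks of $\widehat{\beta}$ separated by $S$ lies inside $B$. A sublink contained in a ball inside the solid torus $V$ is null-homologous in $V$, i.e.\ has winding number zero around the axis $A$. But every component of a closed braid winds around $A$ a strictly positive number of times, so no nonempty sublink of $\widehat{\beta}$ can lie in such a ball. Hence a splitting sphere for a nontrivially split closed braid must meet the axis. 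What Birman and Menasco actually prove is that, after exchange moves (not arbitrary destabilizations, which would change the strand number and leave $\mathbb{B}_3$), the splitting sphere can be isotoped to intersect $A$ in exactly two points; the sphere then meets each disk fiber in a single arc separating the marked points into two blocks, and it is \emph{this} picture, not a sphere disjoint from the axis, that yields the block decomposition of the braid. One would then still need to verify that for $n=3$ the exchange moves used can be absorbed into conjugation, which is an additional argument you have not supplied. As written, the key geometric step of your forward direction cannot be carried out, so the proof does not go through.
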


\newpage

We are now ready to prove Conjecture~\ref{conj:c+2-a_gT} for closed positive 3-braids.

\begin{theorem}\label{conj:positive_3-braids}
    Let $\beta$ be a positive 3-braid such that $\widehat{\beta}$ is prime and non-split. Then, $$c(\widehat{\beta}) + 2 - \alpha(\widehat{\beta}) \geq 2g_T(\widehat{\beta}),$$
    and thus Conjecture~\ref{conj:c+2-a_gT} is true for closed positive 3-braids.
\end{theorem}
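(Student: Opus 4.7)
The plan is to apply the five-family classification of Proposition~\ref{prop:positive_infimum_conjugate_5_families} to split the argument into five explicit cases, to compute the crossing number via Theorem~\ref{th:crossing_number_braid_closures}, to bound the arc index above by Theorem~\ref{th:arc_index_closed_positive_3-braids}, to bound the Turaev genus above by Theorem~\ref{th:turaev_genus_closed_3-braids} after converting the chosen representative into Murasugi's form via Lemma~\ref{lemma:braid_classifications}, and then to perform a short arithmetic check in each family.

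First I would discard the braids whose closures fail the hypothesis of Conjecture~\ref{conj:c+2-a_gT} using Theorems~\ref{th:prime_closed_3-braids} and~\ref{th:split_closed_3-braids}: among positive $3$-braids, $\widehat{\beta}$ is non-prime or split precisely when $\beta$ is conjugate to $\sigma_1^{a}$ or $\sigma_1^{a}\sigma_2^{b}$ with $a,b\in\mathbb{Z}\setminus\{\pm 1\}$. After setting these aside (together with the corresponding degenerate values inside each $\Lambda_i$, namely $\Delta^0$ in $\Lambda_1$, $p=0$ in $\Lambda_2$, $u=0$ in $\Lambda_3$, and the braids $\sigma_1^k\sigma_2$ which are handled by the unknot/torus-link base cases), the braid index of $\beta$ is $3$ by Theorem~\ref{th:braid_index_closed_3-braids}, so Theorem~\ref{th:crossing_number_braid_closures} identifies $c(\widehat{\beta})$ with the exponent sum of the word given by Proposition~\ref{prop:positive_infimum_conjugate_5_families}: namely $3p$, $3p+k_1$, $6u+2$, $6u+\sum k_i$, and $6u+3+\sum k_i$ in $\Lambda_1,\ldots,\Lambda_5$ respectively.

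Second, for each family I would combine the crossing number with the upper bound on $\alpha(\widehat{\beta})$ from Theorem~\ref{th:arc_index_closed_positive_3-braids} to obtain a lower bound on $c(\widehat{\beta})+2-\alpha(\widehat{\beta})$. In parallel, the Murasugi form of $\beta$ is recovered from the $\Lambda_i$-parameters via the explicit dictionary produced inside the proof of Lemma~\ref{lemma:braid_classifications}: for instance, $\Lambda_4$ with parameters $u,t$ corresponds to Murasugi's first family with $v=u+t$ and $\mathbf{b}=2t$, while $\Lambda_5$ with parameters $u,t$ corresponds to $v=u+t+1$ and $\mathbf{b}=2t+1$; the values of $v$ for $\Lambda_1$, $\Lambda_2$, $\Lambda_3$ are read off in the same way from Lemma~\ref{lemma:braid_classifications}(ii)--(iii). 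Theorem~\ref{th:turaev_genus_closed_3-braids} then supplies the bound $g_T(\widehat{\beta})\leq |v|$, sharpened to $g_T(\widehat{\beta})=v-1$ whenever $\beta$ is conjugate into Murasugi's third family.

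Third, the inequality $c+2-\alpha\geq 2g_T$ reduces to a direct numerical check in each family. When the standard braid diagram is adequate, notably in $\Lambda_4$ with $u=0$, Proposition~\ref{prop:adeq} yields the conclusion immediately; for $u\geq 1$ inside $\Lambda_3$ and $\Lambda_4$ one obtains $c+2-\alpha\geq 3u+2t-c_0\geq 2(u+t)\geq 2g_T$ for a small constant $c_0$ depending on the family. The most delicate situation is $\Lambda_5$ with $u=0$, together with the odd-$p$ subcases of $\Lambda_1$ and $\Lambda_2$, where the naive bound $g_T(\widehat{\beta})\leq |v|$ leaves a gap of $1$ against the lower bound on $c+2-\alpha$ afforded by Theorem~\ref{th:arc_index_closed_positive_3-braids}. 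The remedy is to strengthen the Turaev-genus bound to $g_T(\widehat{\beta})=|v|-1$ in this regime, either by reorganizing $\beta$ as a Murasugi third-family braid via Lemma~\ref{lemma:braid_classifications}(iii) when possible, or by exhibiting directly a diagram of $\widehat{\beta}$ with $|s_A D|+|s_B D|$ larger by $2$ than in the standard braid projection; the latter can be produced by a Markov/Reidemeister modification tailored to the $\Lambda_5$ word, or via a short $B$-Lando graph computation in the spirit of Proposition~\ref{prop:upper_extreme_Kh_3-braids}. The systematic bookkeeping of parities of $p$ and $\mathbf{b}$ across the five $\Lambda_i$'s is the main technical hurdle; once it is completed, the inequality holds uniformly.
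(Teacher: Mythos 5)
Your proposal relies on exactly the same ingredients as the paper's proof (Theorem~\ref{th:crossing_number_braid_closures} for crossing number, Theorem~\ref{th:arc_index_closed_positive_3-braids} for arc index, Theorem~\ref{th:turaev_genus_closed_3-braids} for Turaev genus, Lemma~\ref{lemma:braid_classifications} as a bridge), so this is essentially the same argument. The only organizational difference is that you take the $\Lambda_i$ classification as primary and translate backward to Murasugi's form to read off $v$, whereas the paper starts from Murasugi's three-family classification and maps forward into the $\Lambda_i$'s. The paper's direction is a little safer, because Murasugi's classification is a genuine bijection on conjugacy classes, while the $\Lambda_i$ families overlap under conjugacy (for instance $\Delta^{2v-1}\sigma_1^2 \in \Lambda_2$ with $p$ odd and $k_1 \geq 2$ is conjugate to the image of $\Delta^{2v}\sigma_1^{-1}$, yet it is not produced directly by the statement of Lemma~\ref{lemma:braid_classifications}); so ``reading off $v$'' from the $\Lambda_i$ side requires a short extra verification in a few edge cases.

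There is one concrete slip in your third paragraph: you place $\Lambda_3$ with $u \geq 1$ among the cases handled by the generic bound ``$c + 2 - \alpha \geq 3u + 2t - c_0 \geq 2(u+t) \geq 2g_T$.'' But for $\Lambda_3$ the translation gives $v = u+1$, not $u+t = u$. The naive bound $g_T \leq |v|$ only yields $2g_T \leq 2u+2$, while the lower bound you have on $c + 2 - \alpha$ is $3u$, and $3u \geq 2u+2$ fails at $u = 1$. So $\Lambda_3$ with small $u$ is one of the delicate cases and needs the strengthened identity $g_T(\widehat{\beta}) = v - 1$ from the Murasugi-third-family clause of Theorem~\ref{th:turaev_genus_closed_3-braids}; this is exactly what the paper does in its case (iii.c). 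Your remedy (invoking the $g_T = v-1$ identity for third-family braids) already covers this, but it needs to be applied to $\Lambda_3$, not only to $\Lambda_5$ with $u=0$ and the odd-$p$ subcases of $\Lambda_1$, $\Lambda_2$. With that correction the proposal matches the paper's proof.
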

\begin{proof}
According to Theorem~\ref{th:Murasugi_classification}, every 3-braid is conjugate to another one falling into exactly one of the described families. We will address each of the three cases separately.
    \begin{enumerate}[label=(\roman*), leftmargin=*, labelindent=0pt]
        \item As stated in Lemma~\ref{lemma:braid_classifications}, given $$\beta=\Delta^{2v} \sigma_1^{a_1} \sigma_2^{-b_1} \sigma_1^{a_2} \sigma_2^{-b_2} \cdots \sigma_1^{a_r} \sigma_2^{-b_r},$$ with $v \in \mathbb{Z}$ and $a_i,b_i,r > 0$, it is positive if and only if $2v-\mathbf{b} \geq 0$, where $\mathbf{b}=\sum_{i=1}^r b_i$. Let us define $\mathbf{a}=\sum_{i=1}^r a_i$.
        
        The braid $\beta$ is conjugate to 
       \[ \gamma = \Delta^{2v-\mathbf{b}} \left( \sigma_{[\mathbf{b}]}^{a_1+2} \sigma_{[\mathbf{b}-1]}^2 \cdots \sigma_{[\mathbf{b}-b_1+1]}^2 \right) \left( \sigma_{[\mathbf{b}-b_1]}^{a_2+2} \sigma_{[\mathbf{b}-b_1-1]}^2 \cdots \sigma_{[\mathbf{b}-b_1-b_2+1]}^2 \right) \cdots \left( \sigma_{[b_r]}^{a_r+2} \sigma_{[b_r-1]}^2 \cdots \sigma_{[1]}^2  \right), \] which belongs either to $\Lambda_4$ if $\mathbf{b}$ is even or to $\Lambda_5$ if $\mathbf{b}$ is odd. In any case, note that
       $$c(\widehat{\gamma})=c(D)=3(2v-\mathbf{b}) + \mathbf{a} + 2 \mathbf{b} = 6v+\mathbf{a}-\mathbf{b}.$$
       
       \begin{itemize}

         \item Let $\mathbf{b}$ be even. Then $\gamma \in \Lambda_4$, with $2u=2v-\mathbf{b}$, $2t=\mathbf{b}$ and $\sum k_i = \mathbf{a}+2\mathbf{b}$. Let $D$ be the diagram derived from the given representative of $\gamma$. Notice that $D$ is an adequate diagram if $u=0$, and the result holds for this case by Theorem~\ref{thm:adequate}. 
         
         Let $u \geq 1$. We have 
         \[ \begin{array}{rcl}
         c(\widehat{\gamma}) + 2 - \alpha(\widehat{\gamma}) & \geq & (6v+\mathbf{a}-\mathbf{b}) +   2 - \left[ 3 \left(v-\frac{\mathbf{b}}{2} \right) + (\mathbf{a}+2\mathbf{b})-\mathbf{b} + 3 \right]\\ & = & 2v + \frac{1}{2}(2v-\mathbf{b}-2) \geq 2v \geq 2g_T(\widehat{\beta}), 
         \end{array} \]
        where the last inequality is a consequence of Theorem~\ref{th:turaev_genus_closed_3-braids}.

        \item Let $\mathbf{b}$ be odd. Then $\gamma \in \Lambda_5$, with $2u+1=2v-\mathbf{b}$, $2t+1=\mathbf{b}$ and $\sum k_i = \mathbf{a}+2\mathbf{b}$. We have
         \[ \begin{array}{rcl}
          c(\widehat{\gamma}) + 2 - \alpha(\widehat{\gamma}) & \geq &  (6v+\mathbf{a}-\mathbf{b}) +   2 - \left[ 3 \left(v-\frac{\mathbf{b}}{2} \right) + (\mathbf{a}+2\mathbf{b})-\mathbf{b} + 3 \right] \\ 
          & = & 2v + \frac{1}{2}(2v-\mathbf{b}-3). 
          \end{array} \]
         
        If $2v-\mathbf{b} \geq 3$, we have $c(\widehat{\gamma}) + 2 - \alpha(\widehat{\gamma}) \geq 2v \geq 2g_T(\widehat{\beta})$, by Theorem~\ref{th:turaev_genus_closed_3-braids}. Otherwise, $2v-\mathbf{b} = 1$, and it is not difficult to see that $g_T(D)=t=v-1$, and therefore $g_T(\widehat{\beta})=v-1$ again by Theorem~\ref{th:turaev_genus_closed_3-braids}. In this case we have $c(\widehat{\gamma}) + 2 - \alpha(\widehat{\gamma}) \geq 2v-1 > 2(v-1)= 2g_T(\widehat{\beta})$. 
    
       \end{itemize}

\item Given a positive braid $\beta=\Delta^{2v}\sigma_1^a$, we distinguish three cases:
        \begin{itemize}
            \item If $a=0$, then $v\geq 0$ and $\beta \in \Lambda_1$, with $p=2v$. The special case $v=0$ provides a disjoint union of three unknots, so we can assume $v \geq 1$. Note that $c(\widehat{\beta}) = c(D) = 3p$ by Theorems~\ref{th:crossing_number_braid_closures} and \ref{th:braid_index_closed_3-braids}. Then, by Theorem~\ref{th:arc_index_closed_positive_3-braids} we have
            \[ c(\widehat{\beta})+2-\alpha(\widehat{\beta}) \geq 3p+2- \left(\frac{3}{2}p+3\right) = p + \frac{1}{2}(p-2) \geq p = 2v \geq 2g_T(\widehat{\beta}),
            \]
             where the last inequality is a consequence of Theorem~\ref{th:turaev_genus_closed_3-braids}.

             \item If $a>0$, then $v\geq 0$ and $\beta \in \Lambda_2$, with $p=2v$ and $k_1=a$. The special case $p=0$ provides a disjoint union of a torus link $T(2,a)$ and an unknot, so we can assume $p \geq 2$. Note that $c(\widehat{\beta})=c(D)=3p+k_1$ by Theorems~\ref{th:crossing_number_braid_closures} and \ref{th:braid_index_closed_3-braids}. Then, by Theorem~\ref{th:arc_index_closed_positive_3-braids} we have
             \[ \begin{array}{rcl} c(\widehat{\beta})+2-\alpha(\widehat{\beta}) & \geq & (3p+k_1)+2- \left(\frac{1}{2}(3p+1)+k_1+2\right) \\
             & = & p + \frac{1}{2}(p-2) \geq p = 2v \geq 2g_T(\widehat{\beta}),
             \end{array}
            \]
            where the last inequality is a consequence of Theorem~\ref{th:turaev_genus_closed_3-braids}.

            \item If $a<0$, then $2v-|a|\geq 0$ and $\beta$ is conjugate to $\gamma = \Delta^{2v-|a|} \sigma_{[|a|]}^2 \sigma_{[|a|-1]}^2 \cdots \sigma_{[1]}^2$ and to $\gamma'=\Delta^{2v-|a|} \sigma_{[|a|-1]}^2 \sigma_{[|a|-2]}^2 \cdots \sigma_{[0]}^2$ (note that $\gamma'=\Delta\gamma\Delta^{-1}$). Let $D$ and $D'$ be the diagrams derived from the given representatives of $\gamma$ and $\gamma'$, respectively. We distinguish two cases depending on the parity of $a$.


            If $a$ is even, then $\gamma \in \Lambda_{4}$, with $2u=2v-|a|$, $2t=|a|$ and $\sum k_i = 4t$. The special case $u=0$ provides an adequate diagram, and hence the result holds by Theorem~\ref{thm:adequate}. Let us assume that $u > 0$. Note that $c(\widehat{\gamma})=c(D)=6u+4t$ by Theorems~\ref{th:crossing_number_braid_closures} and \ref{th:braid_index_closed_3-braids}. Then, by Theorem~\ref{th:arc_index_closed_positive_3-braids} we have
            \[ \begin{array}{rcl} c(\widehat{\beta})+2-\alpha(\widehat{\beta}) & \geq &  (6u+4t)+2- \left(3u + \sum k_i - 2t + 3\right) \\
            & = & 2u +(u-1)+2t \geq 2u+2t = 2v \geq 2g_T(\widehat{\beta}),
            \end{array}
            \]
             where the last inequality is a consequence of Theorem~\ref{th:turaev_genus_closed_3-braids}.


            If $a$ is odd, then $\gamma' \in \Lambda_{5}$, with $2u+1=2v-|a|$, $2t+1=|a|$ and $\sum k_i = 4t+2$. Note that $c(\widehat{\gamma'})=c(D')=6u+4t+5$ by Theorems~\ref{th:crossing_number_braid_closures} and \ref{th:braid_index_closed_3-braids}. 
             If $u \geq 1$, by Theorem~\ref{th:arc_index_closed_positive_3-braids} we have
            \[ \begin{array}{rcl}
                 c(\widehat{\beta})+2-\alpha(\widehat{\beta}) & \geq  &(6u+4t+5)+2- \left(3u + \sum k_i - 2t + 4\right)  \\
                 & = & (2u+1) +(u-1)+(2t+1)  \\ & \geq & (2u+1)+(2t+1) = 2v \geq 2g_T(\widehat{\beta}),
            \end{array} 
            \]
             where the last inequality is a consequence of Theorem~\ref{th:turaev_genus_closed_3-braids}. Otherwise, $u=0$, and it is not difficult to see that $g_T(D)=t=v-1$, and therefore $g_T(\widehat{\beta})=v-1$ again by Theorem~\ref{th:turaev_genus_closed_3-braids}. In this case we have $c(\widehat{\gamma}) + 2 - \alpha(\widehat{\gamma}) = 2v-1 > 2(v-1)= 2g_T(\widehat{\beta})$.
        \end{itemize}
        
         \item Given a positive braid $\beta = \Delta^{2v} \sigma_1^a \sigma_2^{-1}$, with $a \in \{ -1,-2,-3 \}$, we distinguish three cases depending on the value of $a$:
        \begin{itemize}
            \item If $a=-1$, then $2v-1 \geq 1$ and $\beta$ is conjugate to $\gamma = \Delta^{2v-1} \sigma_1$. Note that $\gamma \in \Lambda_2$, with $p=2v-1$ and $k_1=1$. If $p=1$ (i. e. $v=1$), then $\widehat{\beta}$ is a trefoil knot, which is alternating, and the result holds. Let $p \geq 3$ (i. e. $v\geq 2$) and let $D$ be the diagram derived from the given representatives of $\gamma$. Note that $c(\widehat{\gamma})=c(D)=3p+1$ by Theorems~\ref{th:crossing_number_braid_closures} and \ref{th:braid_index_closed_3-braids}. As in previous cases, by Theorem~\ref{th:arc_index_closed_positive_3-braids} we have
            \[ \begin{array}{rcl}
                 c(\widehat{\beta})+2-\alpha(\widehat{\beta}) & \geq  &(3p+1)+2- \left(\frac{1}{2}(3p+1)+3\right)  \\
                 & = & 2v + (v-2) \geq 2v > 2(v-1) = 2g_T(\widehat{\beta}).
            \end{array} 
            \]
            
            \item If $a=-2$, then $2v-2 \geq 0$ and $\beta$ is conjugate to $\gamma = \Delta^{2v-1}$. Note that $\gamma \in \Lambda_1$ with $p=2v-1$. If $p=1$ (i. e. $v=1$), then $\widehat{\beta}$ is a Hopf link, which is alternating, and the result holds. Let $p \geq 3$ (i. e. $v\geq 2$) and let $D$ be the diagram derived from the given representatives of $\gamma$. Note that $c(\widehat{\gamma})=c(D)=3p$ by Theorems~\ref{th:crossing_number_braid_closures} and \ref{th:braid_index_closed_3-braids}. As in previous cases, by Theorem~\ref{th:arc_index_closed_positive_3-braids} we have
             \[ \begin{array}{rcl}
                 c(\widehat{\beta})+2-\alpha(\widehat{\beta}) & \geq  &3p+2- \left(\frac{1}{2}(3p+1)+3\right)  \\
                 & = & 3(v-1) = 3g_T(\widehat{\beta}) > 2g_T(\widehat{\beta}).
            \end{array} 
            \]
            
            \item If $a=-3$, then $2v-3 \geq 1$ (and hence $v \geq 2$) and $\beta$ is conjugate to $\gamma = \Delta^{2v-2} \sigma_1 \sigma_2$. Note that $\gamma \in \Lambda_3$, with $2u=2v-2$. Let $D$ be the diagram derived from the given representatives of $\gamma$. Since $c(\widehat{\gamma})=c(D)=6u+2$,  by Theorem~\ref{th:arc_index_closed_positive_3-braids} we have
            \[ \begin{array}{rcl}
                 c(\widehat{\beta})+2-\alpha(\widehat{\beta}) & \geq  &(6u+2)+2-(3u+4)  \\
                 & = &6(v-1) >  2(v-1) = 2g_T(\widehat{\beta}).
            \end{array} 
            \qedhere
            \]
        \end{itemize}
        
    \end{enumerate}    
        
\end{proof} 

\section{Torus links}\label{sec:torus_links}

In this section, we confirm Conjecture~\ref{conj:c+2-a_gT} for torus links. The crossing number of a torus link $T_{p,q}$, where $p, q \in \mathbb{Z},$ is 
\begin{equation}
    c(T_{p,q})=\min \{ |p|(|q|-1), |q|(|p|-1) \}.
\end{equation}
Matsuda \cite[Th. 1.1]{Matsuda_2006} computed the arc index of a torus knot. Dalton, Etnyre, and Traynor \cite[Th. 1.1]{DET_2024} computed the maximum Thurston-Bennequin number of a torus link with more than one component. This computation along with Theorem~\ref{thm:tb} yields a formula for the arc index of a torus link. Both computations lead to the following formula for the arc index of a torus knot or link. If $T_{p,q}$ is a torus knot or link, then 
\begin{equation}
    \alpha(T_{p,q})=|p|+|q|.
\end{equation}

    When establishing Conjecture~\ref{conj:c+2-a_gT} for torus links, we reduce our study to those torus links $T_{p,q}$ with $2 \leq p \leq q$ for the following reasons.
    \begin{enumerate}
        \item If $p$ or $q$ is equal to $\pm 1$, then $T_{p,q}$ is the unknot.
        \item Given $p,q \in \mathbb{Z}$, $T_{p,q}$, $T_{q,p}$ and $T_{-p,-q}$ are equivalent.
        \item Given $p,q \in \mathbb{Z}$, $T_{p,-q}$ and $T_{-p,q}$ are equivalent and both represent the mirror image of $T_{p,q}$.
    \end{enumerate}

\begin{theorem}
   Let $T_{p,q}$ be the $(p,q)$-torus link. Then
    $$ c(T_{p,q})+2-\alpha(T_{p,q}) \geq 2 g_T(T_{p,q}) $$
    for $2 \leq p \leq q$, and thus Conjecture~\ref{conj:c+2-a_gT} is true for torus links.
\end{theorem}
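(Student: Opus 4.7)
The plan is to use the given formulas $c(T_{p,q}) = q(p-1)$ and $\alpha(T_{p,q}) = p+q$ (valid for $2 \leq p \leq q$), which reduce the desired inequality to the Turaev genus bound
\[ g_T(T_{p,q}) \leq \tfrac{(p-2)(q-1)}{2}, \]
since a direct computation gives $c(T_{p,q}) + 2 - \alpha(T_{p,q}) = (p-2)(q-1)$. I would then verify this bound by cases on $p$, exhibiting in each case a diagram $D$ of $T_{p,q}$ whose Turaev genus meets the target.

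The two base cases are quick. For $p = 2$, the link $T_{2,q}$ is alternating, so $g_T(T_{2,q}) = 0 = (p-2)(q-1)/2$. For $p = 3$, the link $T_{3,q}$ is the closure of the positive 3-braid $(\sigma_1\sigma_2)^q \in \mathbb{B}_3$, and $T_{3,q}$ is prime and non-split for all $q \geq 3$, so Theorem~\ref{conj:positive_3-braids} applies directly.

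For even $p \geq 4$, I would analyze the standard positive braid diagram $D$ of $T_{p,q}$, realized as the closure of $(\sigma_1\sigma_2\cdots\sigma_{p-1})^q$ on $p$ strands. Since $D$ is the closure of a positive braid on $p$ strands, its Seifert algorithm gives $|s_A D| = p$. The main computation is to show $|s_B D| = q$ by tracing the all-$B$ state: within a single syllable $\sigma_1\cdots\sigma_{p-1}$ the B-resolution arcs assemble into local arcs joining top-$1$ to top-$2$, joining top-$i$ to bot-$(i-2)$ for each $3 \leq i \leq p$, and joining bot-$(p-1)$ to bot-$p$. For $p$ even, stacking $q$ such syllables cyclically and closing, each resulting circle closes within $p/2$ consecutive syllables, producing exactly $q$ components in the all-$B$ state. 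This yields
\[ g_T(T_{p,q}) \leq g_T(D) = \tfrac{1}{2}\bigl(q(p-1)+2-p-q\bigr) = \tfrac{(p-2)(q-1)}{2}, \]
settling the even case.

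For odd $p \geq 5$ the same tracing shows that the all-$B$ state of the standard braid diagram does not split locally: it collapses to a single cycle of length $(2p-2)q$, and thus $g_T(D) = (p-1)(q-1)/2$, which overshoots the target by exactly $(q-1)/2$. Here I would switch to an alternative diagram, for instance the Markov-stabilized braid $(\sigma_1\cdots\sigma_{p-1})^q \sigma_p \in \mathbb{B}_{p+1}$, whose even strand count should restore the favorable all-$B$ counting that worked in the even case; for the small odd values of $p$, the explicit Turaev genus computations of Jin, Lowrance, Polston, and Zhang~\cite{JLPZ_2017} provide the bound directly. The main obstacle is precisely this odd $p \geq 5$ case: the minimal-crossing positive braid diagram on $p$ strands has too large a Turaev genus, and one must identify a non-standard diagram whose Turaev genus saves exactly the missing $(q-1)/2$ to hit the bound $(p-2)(q-1)/2$.
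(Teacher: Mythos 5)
Your framework matches the paper's: reduce to the bound $g_T(T_{p,q}) \leq (p-2)(q-1)/2$ using the known formulas for $c(T_{p,q})$ and $\alpha(T_{p,q})$, dispatch $p=2$ (alternating), $p=3$ (closed positive $3$-braid), and then exhibit a diagram $D$ with $|s_AD|=p$ and $|s_BD|\geq q$. Your even-$p$ case is correct and is exactly the paper's Case~1: the standard closed braid diagram already satisfies $|s_BD|=q$.

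The odd $p\geq 5$ case, however, is a genuine gap, and you acknowledge as much. Your proposed remedy does not work. Markov stabilization to $(\sigma_1\cdots\sigma_{p-1})^q\sigma_p\in\mathbb{B}_{p+1}$ adds one crossing ($c(D')=q(p-1)+1$) and one Seifert circle ($|s_AD'|=p+1$), so to hit the target one still needs $|s_BD'|\geq q$. But the favorable counting in the even-$p$ case came from the full periodic word $(\sigma_1\cdots\sigma_p)^q$, not merely from having an even number of strands; in the stabilized word all but one of the $\sigma_p$'s are missing, so the all-$B$ state does not split into $q$ local pieces. In fact, before $B$-resolving the single $\sigma_p$ the stabilized closure has all-$B$ state consisting of the one big circle coming from $(\sigma_1\cdots\sigma_{p-1})^q$ together with the closed $(p+1)$-th strand, i.e.\ two components, and the $B$-resolution of $\sigma_p$ merges them back to one. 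So $|s_BD'|=1$, far short of $q$. The citation to Jin--Lowrance--Polston--Zhang only treats $p\leq 6$, not arbitrary odd $p$.

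The paper fills the gap differently: it keeps the standard diagram's crossing count and $s_A$-count but modifies the diagram itself, splitting into sub-cases by the parity of $q$. It performs a sequence of isotopies $D=D_1\to D_2\to D_3\to D_4$ (Figure~\ref{fig:torus_link_p_odd_q_odd}) that compress shaded bigon regions between consecutive maximal diagonals into simple double- and triple-twist regions. Each compression preserves $c$ and $|s_A\cdot|$ but raises $|s_B\cdot|$; a careful bookkeeping over three stages shows $|s_BD_4|=q$. That is the missing idea: a non-stabilizing, crossing-preserving deformation that exchanges "global" $B$-circulation for local twist regions, each contributing extra components to the all-$B$ state.
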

\begin{proof}
If $p=2$, then $T_{2,q}$ is alternating. Hence $\alpha(T_{2,q}) = c(T_{2,q})+2$, $g_T(T_{2,q}) = 0$, and the result holds. If $p=3$, then $T_{3,q}$ is the closure of a positive $3$-braid, and the result holds by Theorem~\ref{conj:positive_3-braids}.

 Let $4\leq p \leq q$, and let $D$ be the standard diagram of $T_{p,q}$ obtained as the closure of the braid word $(\sigma_1 \cdots \sigma_{p-1})^q$. The diagram $D$ realizes the crossing number of $T_{p,q}$, and there are $p$ components in the state $s_AD$. Therefore
\begin{equation}
   2g_T(T_{p,q})  \leq  2g_T(D) = c(D) + 2 - |s_AD| - |s_BD| = pq  - q + 2 - p - |s_BD|,
\end{equation}
and
\begin{equation}
    c(T_{p,q}) + 2 - \alpha(T_{p,q}) = pq - 2q - p +2.
\end{equation}
In order to prove the desired inequality, it suffices to find a diagram $D'$ of $T_{p,q}$ (maybe $D$ itself) such that $c(D')=c(D)$, $|s_AD'|=|s_AD|$ and $|s_BD'| \geq q$. This is exactly what we will do. We will address some cases separately.

\textbf{Case 1} ($p$ even). By inspection of $D$, it is straightforward to check that $|s_BD|=q$ in this case. It suffices to take $D'=D$.

\textbf{Case 2} ($p$ odd and $q$ odd). Note that in this case $|s_BD|=1$. Our goal will be to transform $D$ into another diagram of the same link having a sufficiently large number of simple $n$-twists between some two strands, since such a $n$-twist provides $n-1$ components in the corresponding all-$B$ state.

Consider the transformations of $D$ shown in Figure~\ref{fig:torus_link_p_odd_q_odd}. We denote by $D_{x}$ the diagram obtained by closing (in the natural way) the braid in Figure~\ref{fig:torus_link_p_odd_q_odd}($x$), with $x\in \{ 1, 2, 3, 4\}$. Then, it is obvious that $D_{1}=D$.

On each $D_{x}$, we define a \textit{diagonal} as a subset of crossings of $D_{x}$ associated to a sequence of letters $\sigma_u\sigma_{u+1} \cdots \sigma_v$, with $1\leq u<v \leq p-1$, in the corresponding braid. A diagonal is said to be \textit{maximal} if it is not properly contained in any longer such sequence. It is easy to check that if $x\in \{1,2,3\}$, then $D_x$ has $q$ maximal diagonals: $d_1^{x}, \dots, d_q^{x} $. 

In $D_1$, between each pair of diagonals $d_i^1$ and $d_{i+1}^1$, $i=1,3,\dots, q-2$, there is a shaded area that can be compressed into a simple double twist between the first two strands. The result of doing this is $D_2$. Passing from $D_1$ to $D_2$ increases the number of components in the all-$B$ state by $\frac{q-1}{2}$. 
Consequently, \[ |s_BD_2|=|s_BD_1|+ \frac{q-1}{2} = 1 + \frac{q-1}{2}. \] 

To obtain $D_3$ from $D_2$, we shift the uppermost $\frac{p-1}{2}$ simple double twists between the first two strands toward the beginning of the braid. This preserves the number of components in the all-$B$ state. We also move the other $\frac{q-p}{2}$ simple double twists up to place one below each diagonal $d_2^2, d_4^2, \dots, d_{q-p}^2$; when doing this, each of these $\frac{p-q}{2}$ simple double twists becomes a simple $3$-twist. This increases the number of components in the all-$B$ state by $\frac{q-p}{2}$. Therefore,
\[|s_BD_3| = |s_BD_2| + \frac{q-p}{2} = 1 + \frac{2q-p-1}{2}.\]

In $D_3$ (similarly to $D_1$), between each pair of diagonals $d_i^3$ and $d_{i+1}^3$, $i=q-p+2, \dots, q-1$, there is a shaded area that can be compressed into a simple double twist between two strands. The result of doing this is $D_4$. Passing from $D_3$ to $D_4$ increases the number of components in the all-$B$ state by $\frac{p-1}{2}$. As a result,
\[|s_BD_4| = |s_BD_3| + \frac{p-1}{2} = q.\]

Observe that $c(D_x)=c(D)$ and $|s_AD_x| = |s_AD|$ for every $x \in \{1,2,3,4\}$. Then, taking $D'=D_4$, we are done.

\textbf{Case 3} ($p$ odd and $q$ even) In this case, as in the previous one, $|s_BD|=1$. We could proceed in a similar manner. A few minor adjustments of Figure~\ref{fig:torus_link_p_odd_q_odd} would provide four diagrams of $T_{p,q}$: $D_x$ with $x\in \{ 1,2,3,4\}$. We would have again $D_1=D$ and, for every $x \in \{ 1,2,3,4\}$,  $c(D_x)=c(D)$ and $|s_AD_x|=|s_AD|$. In addition,
\[
\begin{array}{rcccl}
 |s_BD_2| & = &   \displaystyle  |s_BD_1|+\frac{q}{2} & = &   \displaystyle  1 + \frac{q}{2}, \\ \\
 |s_BD_3| & = &   \displaystyle  |s_BD_2| + \frac{q-p+1}{2} & = &    \displaystyle  1 + \frac{2q-p+1}{2}, \\ \\
 |s_BD_4| & = &   \displaystyle |s_BD_3| + \frac{p-3}{2} & = &    \displaystyle q.
\end{array}
\]
Therefore, the result holds by taking $D'=D_4$. \qedhere

\begin{figure}[!h]
    \centering
    \includegraphics[width=0.96\linewidth]{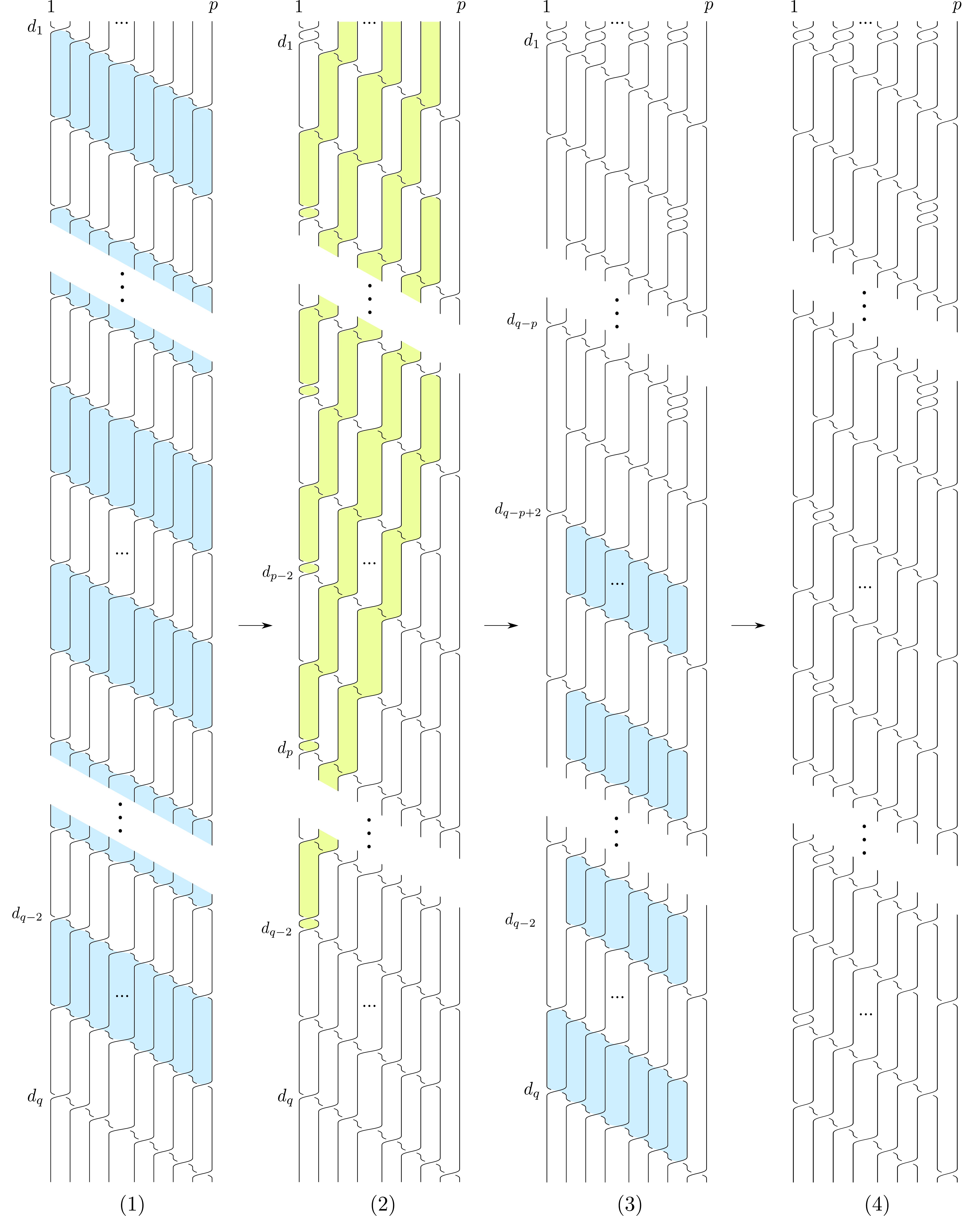}
    \caption{$p$ and $q$ odd}
    \label{fig:torus_link_p_odd_q_odd}
\end{figure}
\end{proof}

\section{Kanenobu knots}
\label{sec:Kanenobu}

Kanenobu \cite{Kanenobu1, Kanenobu2} constructed a family of knots $K(p,q)$, now known as \textit{Kanenobu knots}, such that the family $\{K(p,q)\}_{p,q\in\mathbb{Z}}$ contains infinitely many knots with the same HOMFLY-PT polynomial but distinct Alexander module structure.  The Kanenobu knot $K(p,q)$ for integers $p$ and $q$ is the knot with diagram $D(p,q)$ as in Figure~\ref{fig:Kanenobu}. Kanenobu showed that $K(p,q)=K(q,p)$ and that $K(p,q)^* = K(-p,-q)$.

\begin{figure}[h]
\[\begin{tikzpicture}[scale = .75, thick]

\begin{knot}[	
	consider self intersections,
 	clip width = 3,
 	ignore endpoint intersections = true,
	end tolerance = 2pt
	]
	\flipcrossings{1,4,7,5};
	\strand(0,0) to [out = 90, in = 270] 
	(0,4) to [out = 90, in = 90, looseness=1.5]
	(1,4) to [out = 270, in = 270, looseness=1.5]
	(2,4) to [out = 90, in = 270]
	(2,7.5) to [out = 90, in = 90, looseness=1.5]
	(1,7.5) to [out = 270, in = 90]
	(1,4.5) to [out = 270, in = 270, looseness = 1.5]
	(0,4.5) to [out = 90, in = 270] 
	(0,8.5) to [out = 90, in = 90, looseness=1]
	(5,8.5) to [out = 270, in = 90]
	(5,4.5) to [out = 270, in = 270, looseness=1.5]
	(4,4.5) to [out = 90, in = 270]
	(4,7.5) to [out = 90, in = 90, looseness=1.5]
	(3,7.5) to [out = 270, in = 90]
	(3,4) to [out = 270, in = 270, looseness=1.5]
	(4,4) to [out = 90, in = 90, looseness=1.5]
	(5,4) to [out = 270, in = 90]
	(5,0) to [out = 270, in = 270 ,looseness=1.25]
	(3,0) to [out = 90, in = 270]
	(3,3) to [out = 45, in = 0, looseness=1.5]
	(2.5,4.5) to [out = 180, in = 135, looseness=1.5]
	(2,3) to [out = 270, in = 90]
	(2,0) to [out = 270, in = 270, looseness=1.25]
	(0,0);
	
\end{knot}

\draw[thick] (2,.5) -- (3,.5);
\draw[thick] (2,2.5) -- (3,2.5);
\draw (2.5,1.75) node{$q$};

\draw[thick] (2,7) -- (3,7);
\draw[thick] (2,5) -- (3,5);
\draw (2.5, 6) node{$p$};

\draw (7,6.5) -- (7,2.5);
\draw (8,6.5) -- (8,2.5);
\draw (7,6) -- (8,6);
\draw (7,3) -- (8,3);
\draw (7.5,4.5) node{$n$};
\draw (8.5,4.5) node{$=$};

\begin{knot}[
 	clip width = 3,
 	ignore endpoint intersections = true,
	end tolerance = 1pt
	]
	\flipcrossings{1};
	\strand (9,6.5) to (9,6) to [out = 270, in = 90] (10,5);
	\strand (10,6.5) to (10,6) to [out = 270, in = 90] (9,5);
	\strand (9,2.5) to (9,3) to [out = 90, in = 270] (10,4);
	\strand (10,2.5) to (10,3) to [out = 90, in = 270] (9,4);
\end{knot}

\draw (11,6.5) -- (11,2.5);
\draw (12,6.5) -- (12,2.5);

\draw( 9.5, 2) node{$n>0$};
\draw (11.5,2) node{$n=0$};
\draw (13.5,2) node{$n<0$};

\begin{scope}[xshift = 4cm]

\begin{knot}[
 	clip width = 3,
 	ignore endpoint intersections = true,
	end tolerance = 1pt
	]
	\flipcrossings{2};
	\strand (9,6.5) to (9,6) to [out = 270, in = 90] (10,5);
	\strand (10,6.5) to (10,6) to [out = 270, in = 90] (9,5);
	\strand (9,2.5) to (9,3) to [out = 90, in = 270] (10,4);
	\strand (10,2.5) to (10,3) to [out = 90, in = 270] (9,4);
\end{knot}

\end{scope}

\end{tikzpicture}\]
\caption{The Kanenobu knot $K(p,q)$. The rectangle labeled $n$ contains a twist region with $n$ crossings, as indicated.}
\label{fig:Kanenobu}
\end{figure}
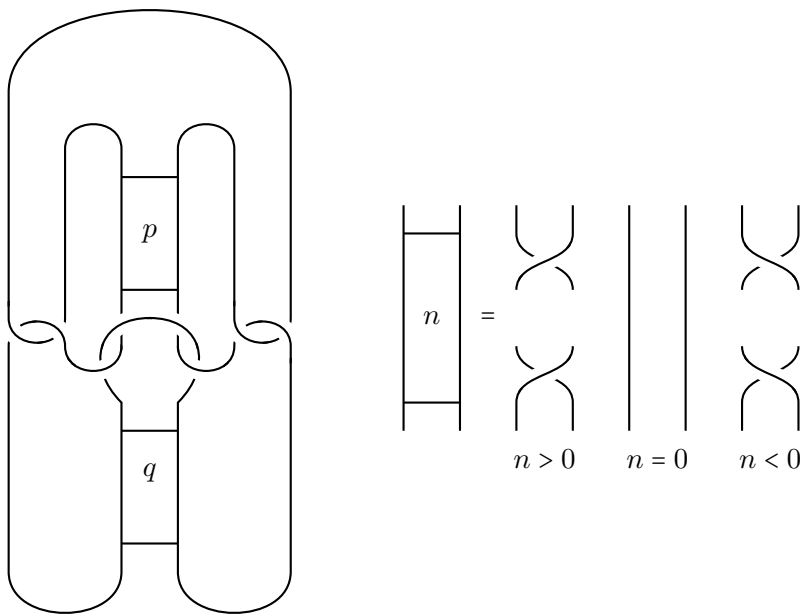

Qazaqzeh and Mansour \cite{QM_2016} computed the crossing number of many Kanenobu knots, and Lee and Takioka \cite{LT_2017} computed the arc index for many Kanenobu knots. We combine their results to confirm Conjecture~\ref{conj:c+2-a_gT} for most Kanenobu knots.

\begin{theorem}
\label{thm:Kanenobu}
Let $p,q\in\mathbb{Z}$ with $|p|\leq |q|$. Conjecture~\ref{conj:c+2-a_gT} holds for the Kanenobu knot $K(p,q)$, that is,
\[c(K(p,q)) + 2 -\alpha(K(p,q)) \geq 2g_T(K(p,q)),\]
except possibly in the case where $pq<0$, $|p|=2$, $|q|\geq 7$, and $q$ is odd.
\end{theorem}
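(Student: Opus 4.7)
The plan is to exploit the symmetries $K(p,q) = K(q,p)$ and $K(p,q)^* = K(-p,-q)$ together with the invariance of $c$, $\alpha$, and $g_T$ under isotopy and mirror reflection. These reduce the problem to the case $|p| \leq |q|$ with $p \geq 0$, leaving two sub-cases according to the sign of $q$.

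When $q \geq 0$, a direct inspection of the diagram $D(p,q)$ in Figure~\ref{fig:Kanenobu} shows that $D(p,q)$ is alternating: both twist regions use crossings of the same sign, and they are compatible with the fixed central block. Turaev's theorem then gives $g_T(K(p,q)) = 0$, and Bae--Park's formula gives $\alpha(K(p,q)) = c(K(p,q)) + 2$, so Conjecture~\ref{conj:c+2-a_gT} holds with equality in this sub-case.

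When $q < 0$ (so $pq < 0$), I would appeal to the crossing number computations of Qazaqzeh and Mansour \cite{QM_2016} and the arc index computations of Lee and Takioka \cite{LT_2017}. Together these furnish explicit values of $c(K(p,q))$ and $\alpha(K(p,q))$ for every parameter range with $pq < 0$ and $|p| \leq |q|$ outside the exception $|p|=2$, $|q|\geq 7$, $q$ odd, which is precisely the regime excluded in the theorem. Substituting these values into $c(K(p,q))+2-\alpha(K(p,q))$ reduces the conjecture in each remaining sub-case to an inequality of the form $c(K(p,q))+2-\alpha(K(p,q)) \geq 2 g_T(K(p,q))$ whose left-hand side is explicit and whose right-hand side needs a matching upper bound.

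The main obstacle is controlling $g_T(K(p,q))$ in the non-alternating regime: the standard diagram $D(p,q)$ is not adequate when $pq < 0$, so Theorem~\ref{thm:adequate} and Abe's Turaev-genus-minimizing result do not directly apply. My plan is to decompose $D(p,q)$ into three alternating tangles (the $p$-twist region, the fixed central block, and the $q$-twist region) and apply the Kim and Armond--Lowrance theorem that $g_T(D)$ depends only on a maximally alternating tangle decomposition of $D$. This should yield a constant upper bound $g_T(K(p,q)) \leq 2$ that, after being combined with the Qazaqzeh--Mansour crossing number and Lee--Takioka arc index in each parity and size sub-case, closes the inequality $c + 2 - \alpha \geq 2 g_T$ in all the parameter ranges not listed in the exception.
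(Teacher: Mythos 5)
Your proposal contains a fundamental error in the first sub-case. You assert that when $p, q \geq 0$, the diagram $D(p,q)$ is alternating, hence $g_T(K(p,q)) = 0$ and $\alpha(K(p,q)) = c(K(p,q))+2$. This is false. The paper observes that $g_T(D(p,q)) = 2$ whenever $p$ and $q$ are both nonzero; in particular $D(p,q)$ is never alternating in that regime, regardless of the signs of $p$ and $q$. Indeed, the arc index formula of Lee and Takioka gives $\alpha(K(p,q)) = p+q+6 = c(K(p,q)) - 2$ for $p,q > 0$ with $pq \geq 3$, so $\alpha < c+2$ and the knot cannot be alternating. The central eight-crossing block of $D(p,q)$ is alternating, but inserting even a single positive twist on one side produces a non-alternating diagram; $K(1,1) = 10_{162}$ is non-alternating. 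So your claimed verification for $q \geq 0$ collapses, and you never actually prove the conjecture when $pq > 0$.

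Your treatment of the $pq<0$ sub-case is also incomplete and more involved than necessary. The bound $g_T(K(p,q)) \leq 2$ follows immediately from $g_T(K(p,q)) \leq g_T(D(p,q)) = 2$ by the definition of Turaev genus as a minimum over diagrams; no appeal to the Kim or Armond--Lowrance alternating tangle decomposition results is needed. More importantly, you gloss over the fact that the Qazaqzeh--Mansour crossing numbers and Lee--Takioka arc indices are not a single uniform formula: the bounds differ in the sub-cases $|p|=1$, $|p|=2$ with $2\leq |q|\leq 6$, $|p|=2$ with $|q|>6$ even, and $\min\{|p|,|q|\}\geq 3$, and each requires a separate substitution to verify $c+2-\alpha \geq 4$. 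The paper also handles $|p|=1$ differently from your plan: rather than matching $c+2-\alpha$ against $2g_T \leq 4$, it exhibits a genus-one Turaev surface for $K(\mp 1, \pm q)$ from a diagram in Qazaqzeh--Mansour, so that the already-established genus-one case of the conjecture applies. Finally, you need to dispose of the small cases $pq\in\{1,2\}$ (i.e., $K(1,1)$ and $K(1,2)$), which fall outside Lee--Takioka's formula $\alpha = p+q+6$ and which the paper treats by identifying them as specific Turaev-genus-one knots.
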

\begin{proof}
Let $D(p,q)$ be the diagram of $K(p,q)$ in Figure~\ref{fig:Kanenobu}. The genus $g_T(D(p,q))$ of the Turaev surface of $D(p,q)$ is zero when $p=q=0$, one when exactly one of $p$ or $q$ is zero, and two when $p$ and $q$ are both nonzero. Since Conjecture~\ref{conj:c+2-a_gT} holds for all links with Turaev genus zero or one, we  assume that $p$ and $q$ are both nonzero.

Suppose that $pq>0$. Since $K(p,q)^* = K(-p,-q)$, we assume that $p>0$ and $q>0$. Lee and Takioka \cite{LT_2017} identified $K(1,1)$ as $10_{162}$ and $K(1,2)=K(2,1)$ as $11n_{132}$. Since both these knots have Turaev genus one, the conjecture holds. Now suppose that $pq\geq 3$. Qazaqzeh and Mansour \cite{QM_2016} proved that $c(K(p,q)) = p+q+8$, and Lee and Takioka \cite{LT_2017} proved that $\alpha(K(p,q))=p+q+6$. Therefore,
\[ c(K(p,q))+2 - \alpha(K(p,q)) = (p+q+8) + 2 - (p+q+6) = 4\geq 2g_T(K(p,q)).\]

Suppose that $pq<0$ and $|p|=1$. For concreteness, let $p=-1$ and $q>0$. The last diagram in Figure 7 of \cite{QM_2016} is a diagram $D$ of $K(-1,q)$ such that $g_T(D)=1$. Hence if $pq<0$ and $|p|=1$, then $g_T(K(p,q))\leq 1$, and the conjecture holds.

Suppose that $pq<0$ and $|p|=2$. For concreteness, let $p=-2$ and $q>0$. If $2\leq q \leq 6$, Lee and Takioka \cite{LT_2017} showed that $c(K(p,q))= |p|+|q|+8 = q+10$ and that $\alpha(K(p,q)) \leq q+8$. Therefore,
\[c(K(p,q))+2 - \alpha(K(p,q)) \geq (q+10) + 2 - (q+8) = 4\geq 2g_T(K(p,q)).\]
If $q>6$ and even, then Qazaqzeh and Mansour \cite{QM_2016} proved that $q+9\leq c(K(p,q))$ and Lee and Takioka \cite{LT_2017} proved that $\alpha(K(p,q)) \leq q+7$. Therefore
\[c(K(p,q))+2 - \alpha(K(p,q)) \geq (q+9) + 2 - (q+7) = 4 \geq 2g_T(K(p,q)).\]

Finally, suppose that $pq<0$ and $\min\{|p|,|q|\} \geq 3$. Qazaqzeh and Mansour \cite{QM_2016} proved that $|p|+|q|+7 \leq c(K(p,q)) \leq |p|+|q|+8$, and Lee and Takioka \cite{LT_2017} proved that $\alpha(K(p,q)) \leq |p| + |q| + 5$. Therefore,
\[c(K(p,q)) + 2 - \alpha(K(p,q)) \geq (|p|+|q| +7) + 2 - (|p| + |q| +5) = 4\geq 2g_T(K(p,q)). \qedhere \]

\end{proof}

In the case excluded from Theorem~\ref{thm:Kanenobu}, i.e. when $pq<0$, $|p|=2$, $|q|\geq 7$, and $q$ is odd, Qazaqzeh and Mansour conjectured that the crossing number of $K(p,q)$ is $c(K(p,q)) = |p|+ |q| + 8$. If their crossing number conjecture is true, then Conjecture \ref{conj:c+2-a_gT} will also hold in this case.

\bibliographystyle{plain}
\bibliography{dVVL}

\begin{thebibliography}{10}

\bibitem{Abe_2009}
Tetsuya Abe.
\newblock The {T}uraev genus of an adequate knot.
\newblock {\em Topology Appl.}, 156(17):2704--2712, 2009.

\bibitem{Abe_Kishimoto_2010}
Tetsuya Abe and Kengo Kishimoto.
\newblock The dealternating number and the alternation number of a closed
  3-braid.
\newblock {\em J. Knot Theory Ramifications}, 19(9):1157--1181, 2010.

\bibitem{Armond_Lowrance_2017}
Cody~W. Armond and Adam~M. Lowrance.
\newblock Turaev genus and alternating decompositions.
\newblock {\em Algebr. Geom. Topol.}, 17(2):793--830, 2017.

\bibitem{Bae_2014}
Yongju Bae.
\newblock Arc index via the alternating tangle decomposition.
\newblock {\em J. Knot Theory Ramifications}, 23(7):1460014, 14, 2014.

\bibitem{Bae_Morton_2003}
Yongju Bae and Hugh~R. Morton.
\newblock The spread and extreme terms of {J}ones polynomials.
\newblock {\em J. Knot Theory Ramifications}, 12(3):359--373, 2003.

\bibitem{Bae_Park_2000}
Yongju Bae and Chan-Young Park.
\newblock An upper bound of arc index of links.
\newblock {\em Math. Proc. Cambridge Philos. Soc.}, 129(3):491--500, 2000.

\bibitem{Baldwin_2008}
John~A. Baldwin.
\newblock Heegaard {F}loer homology and genus one, one-boundary component open
  books.
\newblock {\em J. Topol.}, 1(4):963--992, 2008.

\bibitem{BN_2002}
Dror Bar-Natan.
\newblock On {K}hovanov's categorification of the {J}ones polynomial.
\newblock {\em Algebr. Geom. Topol.}, 2:337--370, 2002.

\bibitem{BDLMV_2024}
Theo Beldon, Mia Destefano, Adam~M. Lowrance, Wyatt Milgrim, and Cecilia
  Villase\~nor.
\newblock Near extremal {K}hovanov homology of {T}uraev genus one links.
\newblock {\em Topology Appl.}, 346:Paper No. 108861, 27, 2024.

\bibitem{Birman_Menasco_1994}
Joan~S. Birman and William~W. Menasco.
\newblock Special positions for essential tori in link complements.
\newblock {\em Topology}, 33(3):525--556, 1994.

\bibitem{Brunn_1913}
Hermann Brunn.
\newblock \"{U}ber {K}erneigebiete.
\newblock {\em Math. Ann.}, 73(3):436--440, 1913.

\bibitem{CK_2014}
Abhijit Champanerkar and Ilya Kofman.
\newblock A survey on the {T}uraev genus of knots.
\newblock {\em Acta Math. Vietnam.}, 39(4):497--514, 2014.

\bibitem{CKS_2007}
Abhijit Champanerkar, Ilya Kofman, and Neal Stoltzfus.
\newblock Graphs on surfaces and {K}hovanov homology.
\newblock {\em Algebr. Geom. Topol.}, 7:1531--1540, 2007.

\bibitem{Chandler_Lowrance_Sazdanovic_Summers_2022}
Alex Chandler, Adam~M. Lowrance, Radmila Sazdanovi\'c, and Victor Summers.
\newblock Torsion in thin regions of {K}hovanov homology.
\newblock {\em Canad. J. Math.}, 74(3):630--654, 2022.

\bibitem{Cromwell_1995}
Peter~R. Cromwell.
\newblock Embedding knots and links in an open book. {I}. {B}asic properties.
\newblock {\em Topology Appl.}, 64(1):37--58, 1995.

\bibitem{DET_2024}
Jennifer Dalton, John~B. Etnyre, and Lisa Traynor.
\newblock Legendrian torus and cable links.
\newblock {\em J. Symplectic Geom.}, 22(1):11--108, 2024.

\bibitem{DFKLS_2008}
Oliver~T. Dasbach, David Futer, Efstratia Kalfagianni, Xiao-Song Lin, and
  Neal~W. Stoltzfus.
\newblock The {J}ones polynomial and graphs on surfaces.
\newblock {\em J. Combin. Theory Ser. B}, 98(2):384--399, 2008.

\bibitem{DFKLS_2010}
Oliver~T. Dasbach, David Futer, Efstratia Kalfagianni, Xiao-Song Lin, and
  Neal~W. Stoltzfus.
\newblock Alternating sum formulae for the determinant and other link
  invariants.
\newblock {\em J. Knot Theory Ramifications}, 19(6):765--782, 2010.

\bibitem{DL_2011}
Oliver~T. Dasbach and Adam~M. Lowrance.
\newblock Turaev genus, knot signature, and the knot homology concordance
  invariants.
\newblock {\em Proc. Amer. Math. Soc.}, 139(7):2631--2645, 2011.

\bibitem{DL_2014}
Oliver~T. Dasbach and Adam~M. Lowrance.
\newblock A {T}uraev surface approach to {K}hovanov homology.
\newblock {\em Quantum Topol.}, 5(4):425--486, 2014.

\bibitem{Dasbach_Lowrance_2018}
Oliver~T. Dasbach and Adam~M. Lowrance.
\newblock Invariants for {T}uraev genus one links.
\newblock {\em Comm. Anal. Geom.}, 26(5):1103--1126, 2018.

\bibitem{DL_2020}
Oliver~T. Dasbach and Adam~M. Lowrance.
\newblock Extremal {K}hovanov homology of {T}uraev genus one links.
\newblock {\em Fund. Math.}, 250(1):63--99, 2020.

\bibitem{dVV-Gonzalez-Meneses_Silvero_2025}
\'Alvaro Del Valle~V\'ilchez, Juan Gonz\'alez-Meneses, and Marithania Silvero.
\newblock Positive 3-braids, {K}hovanov homology and {G}arside theory, 2025.
\newblock arXiv:2504.06194.

\bibitem{Dynnikov_2006}
I.~A. Dynnikov.
\newblock Arc-presentations of links: monotonic simplification.
\newblock {\em Fund. Math.}, 190:29--76, 2006.

\bibitem{Dynnikov_Prasolov_2013}
I.~A. Dynnikov and M.~V. Prasolov.
\newblock Bypasses for rectangular diagrams. {A} proof of the {J}ones
  conjecture and related questions.
\newblock {\em Trans. Moscow Math. Soc.}, pages 97--144, 2013.

\bibitem{Elrifai_Morton_1994}
Elsayed~A. El-Rifai and Hugh~R. Morton.
\newblock Algorithms for positive braids.
\newblock {\em Quart. J. Math. Oxford Ser. (2)}, 45(180):479--497, 1994.

\bibitem{Epstein_1992}
David B.~A. Epstein, James~W. Cannon, Derek~F. Holt, Silvio V.~F. Levy,
  Michael~S. Paterson, and William~P. Thurston.
\newblock {\em Word processing in groups}.
\newblock Jones and Bartlett Publishers, Boston, MA, 1992.

\bibitem{Gonzalez-Meneses_Manchon_2014}
Juan Gonz\'alez-Meneses and Pedro M.~G. Manch\'on.
\newblock Closures of positive braids and the {M}orton-{F}ranks-{W}illiams
  inequality.
\newblock {\em Topology Appl.}, 174:14--24, 2014.

\bibitem{Gonzalez-Meneses_Manchon_Silvero_2018}
Juan Gonz\'alez-Meneses, Pedro M.~G. Manch\'on, and Marithania Silvero.
\newblock A geometric description of the extreme {K}hovanov cohomology.
\newblock {\em Proc. Roy. Soc. Edinburgh Sect. A}, 148(3):541--557, 2018.

\bibitem{Jin_Lee_2012}
Gyo~Taek Jin and Hwa~Jeong Lee.
\newblock Prime knots whose arc index is smaller than the crossing number.
\newblock {\em J. Knot Theory Ramifications}, 21(10):1250103, 33, 2012.

\bibitem{Jin_Lee_2022}
Gyo~Taek Jin and Hwa~Jeong Lee.
\newblock Minimal grid diagrams of the prime alternating knots with 12
  crossings.
\newblock {\em J. Knot Theory Ramifications}, 31(1):Paper No. 2250004, 28,
  2022.

\bibitem{Jin_Park_2010}
Gyo~Taek Jin and Wang~Keun Park.
\newblock Prime knots with arc index up to 11 and an upper bound of arc index
  for non-alternating knots.
\newblock {\em J. Knot Theory Ramifications}, 19(12):1655--1672, 2010.

\bibitem{JLPZ_2017}
Kaitian Jin, Adam~M. Lowrance, Eli Polston, and Yanjie Zheng.
\newblock The {T}uraev genus of torus knots.
\newblock {\em J. Knot Theory Ramifications}, 26(14):1750095, 28, 2017.

\bibitem{Jones_1987}
Vaughan F.~R. Jones.
\newblock Hecke algebra representations of braid groups and link polynomials.
\newblock {\em Ann. of Math. (2)}, 126(2):335--388, 1987.

\bibitem{JKK_2022}
Hongtaek Jung, Sungkyung Kang, and Seungwon Kim.
\newblock Concordance invariants and the {T}uraev genus.
\newblock {\em Int. Math. Res. Not. IMRN}, (19):15410--15420, 2022.

\bibitem{Kalfagianni_2018}
Efstratia Kalfagianni.
\newblock A {J}ones slopes characterization of adequate knots.
\newblock {\em Indiana Univ. Math. J.}, 67(1):205--219, 2018.

\bibitem{Kalfagianni_Lee_2023}
Efstratia Kalfagianni and Christine Ruey~Shan Lee.
\newblock Jones diameter and crossing number of knots.
\newblock {\em Adv. Math.}, 417:Paper No. 108937, 35, 2023.

\bibitem{Kalman_2008}
Tam\'as K\'alm\'an.
\newblock Maximal {T}hurston-{B}ennequin number of +adequate links.
\newblock {\em Proc. Amer. Math. Soc.}, 136(8):2969--2977, 2008.

\bibitem{Kanenobu2}
Taizo Kanenobu.
\newblock Examples on polynomial invariants of knots and links.
\newblock {\em Math. Ann.}, 275(4):555--572, 1986.

\bibitem{Kanenobu1}
Taizo Kanenobu.
\newblock Infinitely many knots with the same polynomial invariant.
\newblock {\em Proc. Amer. Math. Soc.}, 97(1):158--162, 1986.

\bibitem{Kauffman_1987}
Louis~H. Kauffman.
\newblock State models and the {J}ones polynomial.
\newblock {\em Topology}, 26(3):395--407, 1987.

\bibitem{Khovanov_2000}
Mikhail Khovanov.
\newblock A categorification of the {J}ones polynomial.
\newblock {\em Duke Math. J.}, 101(3):359--426, 2000.

\bibitem{Khovanov_2003}
Mikhail Khovanov.
\newblock Patterns in knot cohomology. {I}.
\newblock {\em Experiment. Math.}, 12(3):365--374, 2003.

\bibitem{Kim_2018}
Seungwon Kim.
\newblock Link diagrams with low {T}uraev genus.
\newblock {\em Proc. Amer. Math. Soc.}, 146(2):875--890, 2018.

\bibitem{KK_2021}
Seungwon Kim and Ilya Kofman.
\newblock Turaev surfaces.
\newblock In {\em Encyclopedia of knot theory}, pages 213--220. Boca Raton, FL:
  CRC Press, 2021.

\bibitem{Kozlov_1999}
Dmitry~N. Kozlov.
\newblock Complexes of directed trees.
\newblock {\em J. Combin. Theory Ser. A}, 88(1):112--122, 1999.

\bibitem{Jin_Lee_2025}
Hwa~Jeong Lee, Yoonsang Lee, Chanmin Lee, Yeseo Park, Hun Kim, and Gyo~Taek
  Jin.
\newblock Minimal grid diagrams of the prime knots with crossing number 13 and
  arc index 13.
\newblock {\em J. Knot Theory Ramifications}, 34(4):Paper No. 2550009, 11,
  2025.

\bibitem{Jin_Lee_2024_2}
Hwa~Jeong Lee, Alexander Stoimenow, and Gyo~Taek Jin.
\newblock Minimal grid diagrams of the prime alternating knots with 13
  crossings, 2024.
\newblock arXiv:2406.15361.

\bibitem{Jin_Lee_2024_1}
Hwa~Jeong Lee, Alexander Stoimenow, Hun Kim, Minchae Kim, Songwon Ryu, Dongju
  Shin, Joon~Hyeok Choi, Woo~Jin Choi, Jin~Seong Park, and Gyo~Taek Jin.
\newblock Minimal grid diagrams of the prime knots with crossing number 14 and
  arc index 13, 14, 2025.
\newblock arXiv:2407.15859.

\bibitem{LT_2017}
Hwa~Jeong Lee and Hideo Takioka.
\newblock On the arc index of {K}anenobu knots.
\newblock {\em J. Knot Theory Ramifications}, 26(4):1750015, 26, 2017.

\bibitem{Lickorish_Thistlethwaite_1988}
W.~B.~R. Lickorish and M.~B. Thistlethwaite.
\newblock Some links with nontrivial polynomials and their crossing-numbers.
\newblock {\em Comment. Math. Helv.}, 63(4):527--539, 1988.

\bibitem{knotinfo}
Charles Livingston and Allison~H. Moore.
\newblock Knotinfo: Table of knot invariants.
\newblock URL: \url{knotinfo.org}, August 2025.

\bibitem{Lowrance_2008}
Adam~M. Lowrance.
\newblock On knot {F}loer width and {T}uraev genus.
\newblock {\em Algebr. Geom. Topol.}, 8(2):1141--1162, 2008.

\bibitem{Lowrance_2011}
Adam~M. Lowrance.
\newblock The {K}hovanov width of twisted links and closed 3-braids.
\newblock {\em Comment. Math. Helv.}, 86(3):675--706, 2011.

\bibitem{Lowrance_2015}
Adam~M. Lowrance.
\newblock Alternating distances of knots and links.
\newblock {\em Topology Appl.}, 182:53--70, 2015.

\bibitem{Lowrance_2021}
Adam~M. Lowrance.
\newblock Alternating distances of knots.
\newblock In {\em Encyclopedia of knot theory}, pages 243--250. Boca Raton, FL:
  CRC Press, 2021.

\bibitem{Lowrance_Spyropoulos_2017}
Adam~M. Lowrance and Dean Spyropoulos.
\newblock The {J}ones polynomial of an almost alternating link.
\newblock {\em New York J. Math.}, 23:1611--1639, 2017.

\bibitem{Manchon_2004}
Pedro M.~G. Manch\'on.
\newblock Extreme coefficients of {J}ones polynomials and graph theory.
\newblock {\em J. Knot Theory Ramifications}, 13(2):277--295, 2004.

\bibitem{Manturov_2003}
Vassily~O. Manturov.
\newblock Atoms and minimal diagrams of virtual links.
\newblock {\em Dokl. Akad. Nauk}, 391(2):166--168, 2003.

\bibitem{Manturov_2006}
Vassily~O. Manturov.
\newblock The {K}hovanov complex and minimal knot diagrams.
\newblock {\em Dokl. Akad. Nauk}, 406(3):308--311, 2006.

\bibitem{Matsuda_2006}
Hiroshi Matsuda.
\newblock Links in an open book decomposition and in the standard contact
  structure.
\newblock {\em Proc. Amer. Math. Soc.}, 134(12):3697--3702, 2006.

\bibitem{Morton_1979}
Hugh~R. Morton.
\newblock Closed braids which are not prime knots.
\newblock {\em Math. Proc. Cambridge Philos. Soc.}, 86(3):421--426, 1979.

\bibitem{Beltrami_Morton_1998}
Hugh~R. Morton and Elisabetta Beltrami.
\newblock Arc index and the {K}auffman polynomial.
\newblock {\em Math. Proc. Cambridge Philos. Soc.}, 123(1):41--48, 1998.

\bibitem{Murasugi_1974}
Kunio Murasugi.
\newblock {\em On closed {$3$}-braids}, volume No. 151 of {\em Memoirs of the
  American Mathematical Society}.
\newblock American Mathematical Society, Providence, RI, 1974.

\bibitem{Murasugi_1987}
Kunio Murasugi.
\newblock Jones polynomials and classical conjectures in knot theory.
\newblock {\em Topology}, 26(2):187--194, 1987.

\bibitem{Murasugi_1991}
Kunio Murasugi.
\newblock On the braid index of alternating links.
\newblock {\em Trans. Amer. Math. Soc.}, 326(1):237--260, 1991.

\bibitem{Ng_2005}
Lenhard Ng.
\newblock A {L}egendrian {T}hurston-{B}ennequin bound from {K}hovanov homology.
\newblock {\em Algebr. Geom. Topol.}, 5:1637--1653, 2005.

\bibitem{Ng_2012}
Lenhard Ng.
\newblock On arc index and maximal {T}hurston-{B}ennequin number.
\newblock {\em J. Knot Theory Ramifications}, 21(4):1250031, 11, 2012.

\bibitem{Nutt_1997}
Ian~J. Nutt.
\newblock Arc index and the {K}auffman polynomial.
\newblock {\em J. Knot Theory Ramifications}, 6(1):61--77, 1997.

\bibitem{Park_Seo_2000}
Chan-Young Park and Myoungsoo Seo.
\newblock On the arc index of an adequate link.
\newblock {\em Bull. Austral. Math. Soc.}, 61(2):177--187, 2000.

\bibitem{Przytycki_Silvero_2018}
J\'ozef~H. Przytycki and Marithania Silvero.
\newblock Homotopy type of circle graph complexes motivated by extreme
  {K}hovanov homology.
\newblock {\em J. Algebraic Combin.}, 48(1):119--156, 2018.

\bibitem{QM_2016}
Khaled Qazaqzeh and Isra Mansour.
\newblock On {K}anenobu knots.
\newblock {\em Kobe J. Math.}, 33(1-2):31--52, 2016.

\bibitem{Thistlethwaite_1987}
Morwen~B. Thistlethwaite.
\newblock A spanning tree expansion of the {J}ones polynomial.
\newblock {\em Topology}, 26(3):297--309, 1987.

\bibitem{Thistlethwaite_1988}
Morwen~B. Thistlethwaite.
\newblock On the {K}auffman polynomial of an adequate link.
\newblock {\em Invent. Math.}, 93(2):285--296, 1988.

\bibitem{Turaev_1987}
Vladimir~G. Turaev.
\newblock A simple proof of the {M}urasugi and {K}auffman theorems on
  alternating links.
\newblock {\em Enseign. Math. (2)}, 33(3-4):203--225, 1987.

\bibitem{Viro2_2004}
Oleg Viro.
\newblock Khovanov homology, its definitions and ramifications.
\newblock {\em Fund. Math.}, 184:317--342, 2004.

\end{thebibliography}

\end{document}